\newtheorem{thm}{Theorem}[section]
\newtheorem{prop}{Proposition}[section]
\newtheorem{lem}{Lemma}[section]
\newtheorem{defi}{Definition}[section]
\newtheorem{col}{Corollary}[section]
\newtheorem {rmk} {Remark} [section]
\numberwithin{equation}{section}
\def\sommaire{\@restonecolfalse\if@twocolumn\@restonecoltrue\onecolumn
\fi\chapter*{Sommaire\@mkboth{SOMMAIRE}{SOMMAIRE}}
  \@starttoc{toc}\if@restonecol\twocolumn\fi}
\def\thebibliographie#1{\chapter*{Bibliographie\@mkboth
  {BIBLIOGRAPHIE}{BIBLIOGRAPHIE}}\list
  {[\arabic{enumi}]}{\settowidth\labelwidth{[#1]}\leftmargin\labelwidth
  \advance\leftmargin\labelsep
  \usecounter{enumi}}
  \def\newblock{\hskip .11em plus .33em minus .07em}
  \sloppy\clubpenalty4000\widowpenalty4000
  \sfcode`\.=1000\relax}
\def\references#1{\section*{R\'ef\'erences\@mkboth
  {R\'EF\'ERENCES}{R\'EF\'ERENCES}}\list
  {[\arabic{enumi}]}{\settowidth\labelwidth{[#1]}\leftmargin\labelwidth
  \advance\leftmargin\labelsep
  \usecounter{enumi}}
  \def\newblock{\hskip .11em plus .33em minus .07em}
  \sloppy\clubpenalty4000\widowpenalty4000
  \sfcode`\.=1000\relax}
\def\refer#1{~\ref{#1}}
\def\inte#1{
\displaystyle\mathop{#1\kern0pt}^\circ
}
\newcommand{\beq}{\begin{equation}}
\newcommand{\eeq}{\end{equation}}
\newcommand{\ben}{\begin{eqnarray}}
\newcommand{\een}{\end{eqnarray}}
\newcommand{\beno}{\begin{eqnarray*}}
\newcommand{\eeno}{\end{eqnarray*}}
\def\hm{{H^{-\frac12,0}}}
\let\d=\delta
\let\e=\varepsilon
\let\lam=\lambda
\let\f=\frac
\let\p=\partial
\let\om=\omega
\let\D=\Delta
\let\Om=\Omega
\let\wt=\widetilde
\let\wh=\widehat
\def\dB{\dot{B}}
\def\dH{\dot{H}}
\def\th{\theta}
\def\cA{{\mathcal A}}
\def\cB{{\mathcal B}}
\def\cC{{\mathcal C}}
\def\cF{{\mathcal F}}
\def\cS{{\mathcal S}}
\def\virgp{\raise 2pt\hbox{,}}
\def\cdotpv{\raise 2pt\hbox{;}}
\def\eqdefa{\buildrel\hbox{\footnotesize def}\over =}
\def\C{\mathop{\mathbb C\kern 0pt}\nolimits}
\def\DD{\mathop{\mathbb D\kern 0pt}\nolimits}
\def\EE{\mathop{\mathbb E\kern 0pt}\nolimits}
\def\K{\mathop{\mathbb K\kern 0pt}\nolimits}
\def\N{\mathop{\mathbb  N\kern 0pt}\nolimits}
\def\Q{\mathop{\mathbb  Q\kern 0pt}\nolimits}
\def\R{{\mathop{\mathbb R\kern 0pt}\nolimits}}
\def\SS{\mathop{\mathbb  S\kern 0pt}\nolimits}
\def\St{\mathop{\mathbb  S\kern 0pt}\nolimits}
\def\Z{\mathop{\mathbb  Z\kern 0pt}\nolimits}
\def\ZZ{{\mathop{\mathbb  Z\kern 0pt}\nolimits}}
\def\H{{\mathop{{\mathbb  H\kern 0pt}}\nolimits}}
\def\PP{\mathop{\mathbb P\kern 0pt}\nolimits}
\def\TT{\mathop{\mathbb T\kern 0pt}\nolimits}
\def\h {{\rm h}}
\def\v {{\rm v}}
\def\na{\nabla}
\def\buh{\bar{u}^{\rm h}}
\newcommand{\la}{\lambda}
\newcommand{\andf}{\quad\hbox{and}\quad}
\newcommand{\with}{\quad\hbox{with}\quad}
\def\omss{\omega_{\frac34}}
\def\dive{\mathop{\rm div}\nolimits}
\def\curl{\mathop{\rm curl}\nolimits}
\def\Supp{\mathop{\rm Supp}\nolimits\ }
\begin{document}

\title[Navier-Stokes system with
strong   dissipation in one direction]
{Global strong solutions to 3-D Navier-Stokes system with
strong dissipation in one direction}
\author[M. PAICU]{Marius Paicu}
\address [M. PAICU]%
{Universit\'e  Bordeaux 1\\
 Institut de Math\'ematiques de Bordeaux\\
F-33405 Talence Cedex, France} \email{marius.paicu@math.u-bordeaux1.fr}
\author[P. ZHANG]{Ping Zhang}%
\address[P. ZHANG]
 {Academy of Mathematics $\&$ Systems Science
and  Hua Loo-Keng Key Laboratory of Mathematics, The Chinese Academy of
Sciences, Beijing 100190, CHINA, and School of Mathematical Sciences, University of Chinese Academy of Sciences, Beijing 100049, CHINA. } \email{zp@amss.ac.cn}
\date{\today}
\maketitle

\begin{abstract}

We consider three dimensional incompressible Navier-Stokes equation $(NS)$ with different viscous coefficient in the vertical and horizontal variables. In particular,
 when one of these viscous coefficients is large enough compared to the initial data, we prove the global well-posedness of this system.
 In fact, we obtain the existence of a global strong solution to $(NS)$ when the initial data verify an anisotropic
  smallness condition which takes into account the different roles of the horizontal and vertical viscosity.

\end{abstract}

\setcounter{equation}{0}
\section{Introduction}

In this paper, we consider 3-D anisotropic incompressible Navier-Stokes system with different vertical  and horizontal viscosity.
Our goal is to  study the role of a large viscous coefficient  in one direction  that plays
in obtaining the global existence of  strong solutions to Navier-Stokes system.   Let us recall  Navier-Stokes equations which describes
  the evolution of viscous fluid in  $\R^3:$
\begin{equation}
\label{NS}
\begin{cases}
\partial_t u+u\cdot\nabla u-\nu_\h\Delta_\h u-\nu_{\v}\partial_3^2 u=-\nabla p\hskip0.3cm\text{in}\hskip0.3cm[0,T]\times \R^3,\\
\dive u=0,\\
u|_{t=0}=u_0,
\end{cases}
\end{equation}
where $\Delta_\h\eqdefa \partial_{x_1}^2+\partial_{x_2}^2$ designates the horizontal Laplacian, $\nu_\h>0$ and $\nu_\v>0$ are respectively the horizontal and the vertical viscous coefficients, $u=(u_1,u_2,u_3)$  denotes  the fluid velocity, and $p$ the scalar pressure function, which guarantees the divergence free condition of the velocity field.

 \smallskip

When $\nu_\h=\nu_\v=\nu>0,$ \eqref{NS} is exactly the classical Navier-Stokes system. In the sequel, we shall always denote this system  by $(NS_\nu).$
 Whereas when $\nu_\h=\nu>0$ and $\nu_\v=0,$ $(\ref{NS})$ reduces to the
anisotropic Navier-Stokes system arising from geophysical fluid mechanics (see \cite{CDGG}). We remark that  Navier-Stokes system with large vertical viscosity
 is a usual model to study the evolution of the fluid in a thin domain in the vertical direction (see (2.4) of  \cite{raugelsell} for instance).

\smallskip

We begin by recalling some important and classical  results about Navier-Stokes system. Especially, we shall
focus on the conditions which guarantee  the global existence of  strong solution to $(NS_\nu).$

The first important result about the classical Navier-Stokes system was obtained by Leray  in the seminar paper \cite{leray} in  1933. He proved
that given an arbitrary  finite energy solenoidal vector field,  $(NS_\nu)$  has  a global in time weak
solution which verifies the energy inequality. This solution is unique and regular for positive time in $\R^2,
$ but unfortunately the uniqueness and regularities of such solution in three space dimension is still one of most challenging open questions in the field of
 mathematical fluid mechanics.
 Fujita-Kato  \cite{fujitakato} gave a partial answer
to the construction of global unique solution to  $(NS_\nu)$. Indeed, the theorem of Fujita-Kato  \cite{fujitakato} allows to construct local in time unique solution
 to \eqref{NS} with initial data in the homogeneous Sobolev spaces  $\dot H^{\frac 12}(\R^3)$, or in the Lebsegue space $L^3(\R^3)$ (see \cite{kato}). Moreover,  if the initial data is small enough compared to the the viscosity, that is,  $\|u_0\|_{\dot H^{\frac 12}}\leq c\nu$,  for some sufficiently small constant $c,$ then the strong solution exists globally in time.

The above  result was extended by  Cannone, Meyer and Planchon \cite{cannonemeyerplanchon} for initial data in Besov spaces with negative index.
More precisely, they proved that if the initial data belongs to the Besov space,
$\dot B^{-1+\frac 3p}_{p,\infty}(\R^3)$ for  some $p\in ]3,\infty[$ and its norm is sufficiently small  compared to the viscosity, then $(NS_\nu)$ has a unique global solution.
The typical example of such kind of initial data reads
$$u_0^\epsilon=\epsilon^{-\alpha}\sin(\frac{x_3}{\epsilon})(-\partial_2\varphi(x),\partial_1\varphi(x),0),$$
where $0<\alpha<1$ and $\varphi\in\mathcal S(\R^3;\R)$. We remark that this type of initial data is not small in either $\dot H^{\frac 12}(\R^3)$  or $L^3(\R^3).$

 The end-point result in this direction is given by Koch and  Tataru \cite{kochtataru}. They proved that given initial data
 in the derivatives of $\text{BMO}$ space and its norm is sufficiently small compared to the viscosity, then  $(NS_\nu)$  has a unique global solution (one may check
 \cite{ZZ12} for the regularities of such solutions).
We remark that for $p\in ]3,\infty[$, there holds
$$\dot H^{\f12}(\R^3) \hookrightarrow L^3(\R^3) \hookrightarrow \dot B^{-1+\frac 3p}_{p,\infty}(\R^3) \hookrightarrow \text{BMO}^{-1}(\R^3)
 \hookrightarrow \dot B^{-1}_{\infty,\infty}(\R^3),$$
and the norms to the above spaces are sclaing-invariant
under the following scaling transformation
\beq \label{S1eq2} u_\lambda(t,x)\eqdefa\lambda u(\lambda^2 t,\lambda x) \andf u_{0,\lambda}(x)\eqdefa \lambda u_0(\lambda x).\eeq
We notice that for any solution $u$ of $(NS_\nu)$ on $[0,T],$ $u_\lam$ determined by \eqref{S1eq2} is also a solution of $(NS_\nu)$ on $[0,T/\lam^2].$ We point out that the largest space, which belongs to $\cS'(\R^3)$ and the norm of which is scaling invariant under \eqref{S1eq2}, is $\dB^{-1}_{\infty,\infty}(\R^3)$.
Moreover, Bourgain and Pavlovi\'c \cite{BP08} proved that $(NS_\nu)$ is actually
ill-posed with initial data in $\dot B^{-1}_{\infty,\infty}(\R^3).$

We mention  some examples of large initial data which generate unique global
solution to $(NS_\nu)$. First of all,
 Raugel and Sell \cite{raugelsell} obtained the global well-posedness of $(NS_\nu)$ in a thin enough domain. This result was extended
by Gallagher \cite{G97} that $(NS_\nu)$ has a unique global  periodic solution provided that
the initial data $u_0$ can be split as $u_0=v_0+w_0$, where $v_0$ is a bi-dimensional solenoidal vector field in $L^2({\Bbb{T}}^2_\h)$
and $w_0\in H^{\frac 12}(\Bbb{T}^3)$ which satisfy
$$\|w_0\|_{H^{\frac 12}(\Bbb{T}^3)}\exp \bigg(\frac{\|u_0\|_{L^2(\Bbb{T}^2_\h)}^2}{\nu^2}\bigg)\leq c\nu$$
for some $c$ being sufficiently small.

  In 2006, Chemin and Gallagher \cite{cgens} constructed the following example of initial
data which  generates a unique global solution to $(NS_\nu)$, and which is large in $\dot B^{-1}_{\infty,\infty}(\R^3)$ and is strongly oscillatory in one
direction,
$$u_0^N=\bigl(Nu_\h(x_h)\cos(Nx_3),-\dive_\h u_h(x_h)sin(Nx_3)\bigr),$$
where $\|u_\h\|_{L^2(\Bbb{T}^2_h)}\leq C(\ln N)^{\frac 19}$.

The other  interesting class of large initial data
which generate unique global  solutions to $(NS_\nu)$
is the so-called
slowly varying data,
\begin{equation*}\label{slowinitialdata}
u_0^\varepsilon(x)=(v^\h_0 +\varepsilon w^\h_0,w^3_0)(x_\h,\varepsilon x_3)
\end{equation*}
which was introduced by Chemin and Gallagher
in \cite{CG10} (see also \cite{CGZ, CZ5}).

On the other hand, by crucially using the fact that $\dive u=0,$ Zhang \cite{Zhang10} and the authors
\cite{PZ1} improved  Fujita-Kato's result
by requiring only two components of the initial velocity being
sufficiently small in some critical Besov space
even when $\nu_\h>0$ and $\nu_\v=0$ in (\ref{NS}).

Liu and the second author \cite{LZ2} first investigated the global well-podesness of \eqref{NS} with $\nu_\h=1$ and $\nu_\v$ being
sufficiently large. In particular, they proved the following result:

\begin{thm}\label{thmLZ}
{\sl Let $\nu_\h=1,$ $u_0$ satisfy $\curl u_0\in L^{\frac 32}(\R^3)$ and $\dive u_0=0.$
Then there exists some universal positive constant $C_1$ such that if
\begin{equation} \label{thmmaincondi}
\nu_\v\geq C_1\bigl(M_0+M_0^{\frac14}\bigr) \with M_0\eqdefa \|\omega_0\|_{L^{\f32}}^{\f32}+\|\nabla u^3_0\|_{\hm}^2 \andf \om_0\eqdefa\p_1u_0^2-\p_2u_0^1,
\end{equation}
(\ref{NS}) has a unique global solution $u\in C\bigl([0,\infty[,H^{\f12}(\R^3)\bigr)
\bigcap L^2_{\rm{loc}}\bigl(\R^+; H^{\frac32}(\R^3)\bigr)$
so that for any $t>0,$
\begin{equation}\label{thmmainestimate}
\bigl\|\om(t)\bigr\|_{L^{\f32}}^{\f32}+\|\nabla u^3(t)\|_{\hm}^2
+\int_0^t\left(\bigl\|\nabla_{\nu_\v}\omss\bigr\|_{L^2}^2
+\bigl\|\nabla_{\nu_\v} \nabla v^3\bigr\|_{\hm}^2\right)\, dt'
\leq 2M_0,
\end{equation}
 where $\nabla_{\nu_\v}\eqdefa \left(\na_\h, \nu_\v\p_3\right).$
}\end{thm}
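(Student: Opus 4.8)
The plan is to establish the a priori estimate \eqref{thmmainestimate} for smooth solutions and then obtain the theorem by a routine approximation-plus-continuation argument. Local existence of a strong solution $u\in C([0,T];H^{\f12})\cap L^2([0,T];H^{\f32})$ holds by Fujita--Kato type theory, the second positive viscosity only helping; if $T^*$ denotes the maximal time of existence, it then suffices to prove that
$$E(t)\eqdefa\|\om(t)\|_{L^{\f32}}^{\f32}+\|\na u^3(t)\|_{\hm}^2+\int_0^t\Bigl(\|\nabla_{\nu_\v}\omss\|_{L^2}^2+\|\nabla_{\nu_\v}\na u^3\|_{\hm}^2\Bigr)\,dt'$$
remains $\leq 2M_0$ on $[0,T^*[$: by the $H^{\f12}$ continuation criterion this forces $T^*=\infty$, and \eqref{thmmainestimate} is exactly this bound. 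By continuity in time, the statement $E(t)\leq 2M_0$ reduces to a closed bootstrap inequality under the assumption $E\leq 2M_0$.

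First I would write down the two scalar equations on which the argument rests. Taking the vertical component of $\curl$ of \eqref{NS} with $\nu_\h=1$ and using $\dive u=0$ gives
$$\p_t\om+u\cdot\na\om-\D_\h\om-\nu_\v\p_3^2\om=\p_3u^1\,\p_2u^3-\p_3u^2\,\p_1u^3+\om\,\p_3u^3,$$
while the third component of \eqref{NS} reads $\p_t u^3+u\cdot\na u^3-\D_\h u^3-\nu_\v\p_3^2u^3=-\p_3 p$ with $-\D p=\sum_{i,j}\p_i\p_j(u^iu^j)$. Two structural facts are used throughout: $u_\h=(u^1,u^2)$ is recovered from $\om$ and $\p_3u^3$ by the two-dimensional Biot--Savart law in the horizontal variable (since $\curl_\h u_\h=\om$, $\dive_\h u_\h=-\p_3u^3$), so every horizontal norm of $u_\h$ entering the nonlinearities is controlled by those of $\om$ and $\p_3u^3$; and, after rewriting $u_\h\cdot\na_\h u^3=\dive_\h(u^3u_\h)+u^3\p_3u^3$, every genuinely dangerous quadratic term above carries at least one vertical derivative $\p_3$, which is precisely what the large coefficient $\nu_\v$ will be used to tame.

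Then come the two energy estimates and the nonlinear bounds. Testing the $\om$--equation against $|\om|^{-\f12}\om$ makes the transport term disappear by incompressibility and turns the diffusion into the good dissipative term $c\,\|\nabla_{\nu_\v}\omss\|_{L^2}^2$ with $\omss=|\om|^{\f34}$ (up to the anisotropic rescaling of $x_3$ that normalizes the vertical viscosity); applying $|D_\h|^{-\f12}$ to the $u^3$--equation and pairing with $|D_\h|^{-\f12}\na u^3$ turns its diffusion into $c\,\|\nabla_{\nu_\v}\na u^3\|_{\hm}^2$, the pressure being handled by expressing $\p_3 p$ via $\na_\h\D^{-1}$ and $\D^{-1}\p_3$ acting on the products $u^iu^j$, so that the worst vertical derivative always lands on a factor already carrying $\p_3$. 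Writing $D(t)$ for the sum of the two squared dissipation norms, I would estimate all the nonlinear terms by anisotropic Littlewood--Paley and Bony paraproduct decompositions in $x_\h$ and $x_3$ separately; inserting the Biot--Savart representation of $u_\h$ and using anisotropic Sobolev embeddings together with the $\p_3$ present in each such term, one bounds the nonlinear contributions by $\f{C}{\nu_\v}\bigl(E(t)+E(t)^{\f14}\bigr)D(t)$ --- the $\nu_\v^{-1}$ being exactly the gain extracted from the strong vertical dissipation, and the powers $E$ and $E^{\f14}$ reflecting that $\om$ is measured at the $L^{\f32}$ level while $\na u^3$ is measured at the $L^2$ level. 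On the bootstrap regime $E\leq 2M_0$ this is $\leq C\nu_\v^{-1}(M_0+M_0^{\f14})D(t)$, which under \eqref{thmmaincondi} with $C_1$ large is absorbed into $\f c2 D(t)$; integrating the resulting differential inequality gives $\|\om(t)\|_{L^{\f32}}^{\f32}+\|\na u^3(t)\|_{\hm}^2+\f c2\int_0^t D\leq M_0$, hence $E(t)\leq 2M_0$ and \eqref{thmmainestimate}. Propagation of $H^{\f12}$ regularity of $u$ then follows by feeding the bounds on $\om$ and $u^3$ back into the Biot--Savart formula together with a standard $H^{\f12}$ estimate for \eqref{NS}, and uniqueness follows from an energy estimate on the difference of two solutions carried out in a space of negative horizontal regularity.

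The hard part will be the nonlinear analysis in this anisotropic functional setting: controlling the vortex-stretching products $\p_3u^i\,\p_i u^3$ and, above all, the pressure term $\p_3 p$ in the negative-regularity space $\hm$ uniformly with respect to $\nu_\v$, so that every bad contribution genuinely appears with a negative power of $\nu_\v$ rather than with a universal constant. A close second is the bookkeeping required to make the closing inequality yield precisely the threshold $M_0+M_0^{\f14}$ --- linear plus quarter-power in $M_0$ --- which forces one to track carefully which interpolation inequalities are invoked and at which homogeneity each of $\om$ and $u^3$ is measured.
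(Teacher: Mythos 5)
You should first be aware that this paper does not prove Theorem \ref{thmLZ} at all: it is quoted from Liu--Zhang \cite{LZ2} as background for Theorem \ref{thPZ9}, so there is no in-paper proof to compare against. Judged on its own terms, your proposal does identify the correct architecture of the argument in \cite{LZ2}: propagate $\|\om\|_{L^{\f32}}$ through the equation for the vertical vorticity tested against $|\om|^{-\f12}\om$, propagate $\|\na u^3\|_{\hm}$ through the $u^3$-equation, reconstruct $u^\h=\na_\h^{\perp}\D_\h^{-1}\om-\na_\h\D_\h^{-1}\p_3u^3$ from the horizontal Biot--Savart law, and close a bootstrap by absorbing the nonlinearity into the dissipation using the largeness of $\nu_\v$.

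As a proof, however, it has a genuine gap: the one estimate on which everything rests --- that the sum of all nonlinear contributions is bounded by $C\nu_\v^{-1}\bigl(E+E^{\f14}\bigr)D$ --- is asserted rather than derived, and that estimate \emph{is} the theorem. Two concrete points show that the heuristic ``every dangerous term carries a $\p_3$ and hence gains $\nu_\v^{-1}$'' does not suffice as stated. First, the stretching terms $\p_3u^1\,\p_2u^3-\p_3u^2\,\p_1u^3$ involve $\p_3u^\h=\na_\h^{\perp}\D_\h^{-1}\p_3\om-\na_\h\D_\h^{-1}\p_3^2u^3$; the second piece carries \emph{two} vertical derivatives of $u^3$ while the dissipation controls only one, so the $\nu_\v$-gain for this contribution must come from an interpolation whose exponents you never specify --- and this is exactly where the exponent $\f14$ in the threshold $M_0+M_0^{\f14}$ is decided. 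Second, testing the $\om$-equation against $|\om|^{-\f12}\om$ produces the dissipation $\|\na_\h\omss\|_{L^2}^2+\nu_\v\|\p_3\omss\|_{L^2}^2$, whereas \eqref{thmmainestimate} controls $\|\na_{\nu_\v}\omss\|_{L^2}^2=\|\na_\h\omss\|_{L^2}^2+\nu_\v^2\|\p_3\omss\|_{L^2}^2$; the ``anisotropic rescaling of $x_3$'' you invoke to reconcile the two changes every norm in $M_0$ and every power of $\nu_\v$ in the absorption step, and without carrying it out one cannot verify that the smallness condition really takes the form $\nu_\v\geq C_1\bigl(M_0+M_0^{\f14}\bigr)$ rather than some other power. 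Until those computations are actually performed, what you have is a correct plan, not a proof.
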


Our goal in this paper is to study the role of one big viscosity that plays in
obtaining global existence of strong solutions to \eqref{NS} even the  viscous coefficients in other variables
are small. One of the consequence of our
result below (see Theorem \ref{thPZ9}) ensures that given regular initial data $u_0,$  if  the horizontal viscosity in \eqref{NS} is small like
$\nu_h=\epsilon$ and the vertical one is big enough like
$\frac{\frak{K}}{\epsilon^{15}}$ for some large enough constant $\frak{K},$ then \eqref{NS}  has a unique global solution.
This case is obviously not covered by Theorem \ref{thmLZ}. Furthermore, here we allow the initial data to have a large and slowly varying part.

The  main result of this paper states as follows:

\begin{thm}\label{thPZ9}
 {\sl Let~ $v_0\in\cB^{\f12,0}\cap B^{0,\f12}(\R^3)$ be a solenoidal vector field and $\buh_0$ be a horizontal solenoidal vector field in~$H^1(\R^3)\cap
L^\infty(\R_\v;\dot H^{-\delta} (\R_\h))\cap L^\infty(\R_\v;H^1(\R_\h))$ for
some $\d\in]0,1[.$ We  also assume  that ~$\buh_0,$ ~$\partial_z \buh_0$
and~$\partial_z^2 \buh_0$ belongs to~$L^2(\R_\v;\dot H^{-\frac
12}\cap\dot H^{\frac 12} (\R_\h)).$ 
Then  for any $\th\in [0,1/2[,$ there exists a positive constant $\cA_{\d,\nu_\h}(\buh_0),$ which
depends on the norms of $\buh_0$ above, such that if  $\nu_\v\geq \nu_\h>0$ and  $\nu_\v$ is so large that \beq \label{S1eq1}
\begin{split}
\Bigl(\cA_0^{\f12}\bigl(1+\cA_0^{\f12}\bigr)\bigl(\nu_\h^{-\f78}+\nu_\h^{\f18}\bigr)\nu_\v^{-\f18}+&\e^{1-\th}\nu_\h^{-1+\f{\th}2}\nu_\v^{-\f{\th}2}\Bigr)\cA_{\d,\nu_h}(\buh_0)\leq c_0\nu_\h \with \\ &\qquad\cA_0\eqdefa\|v_0\|_{\cB^{\f12,0}}\bigl(\|v_0\|_{B^{0,\f12}}+\|v_0\|_{\cB^{\f12,0}}\bigr)
\end{split}
\eeq
for some $c_0$ sufficiently small,
 the initial data
\beq\label{S1eq3}
u_{0,\e}(x_\h, x_3) =\bigl( \buh_0(x_\h, \e x_3),0\bigr)+v_0(x)
\eeq
generates a unique global solution $u$ to \eqref{NS} in the space~$C_b(\R^+;
B^{0,\frac 12} (\R^3))$ with $\na u\in L^2(\R^+; B^{0,\frac 12}(\R^3))$. }
\end{thm}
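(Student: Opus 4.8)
The plan is to construct the solution as a perturbation of the slowly varying part. Write $u = u_{\mathrm{app}}^\e + R$, where the approximate solution $u_{\mathrm{app}}^\e(x_\h,x_3) = (\buh(t,x_\h,\e x_3),0)$ is built from the solution $\buh$ of a $2$-D–parametrized Navier-Stokes system: for each fixed value of the slow variable $z = \e x_3$, $\buh(t,\cdot,z)$ solves the horizontally-viscous $2$-D Navier-Stokes equation with viscosity $\nu_\h$ and initial data $\buh_0(\cdot,z)$. The regularity assumptions on $\buh_0$ (namely $\buh_0$, $\p_z\buh_0$, $\p_z^2\buh_0 \in L^2(\R_\v;\dot H^{-1/2}\cap \dot H^{1/2}(\R_\h))$ together with $\buh_0\in H^1\cap L^\infty(\R_\v;\dot H^{-\delta}\cap H^1(\R_\h))$) are exactly what one needs to propagate uniform-in-$z$ bounds on $\buh$ and on its first two $z$-derivatives via the $2$-D theory; this is where the constant $\cA_{\d,\nu_\h}(\buh_0)$ enters. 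First I would carry out this step: establish global bounds for $\buh$ in the relevant anisotropic norms, controlling the error made by $u_{\mathrm{app}}^\e$ in solving the full $3$-D system \eqref{NS} — the residual is $O(\e)$ in the appropriate norm because $\p_3 u_{\mathrm{app}}^\e = \e(\p_z\buh)(x_\h,\e x_3)$, and the vertical dissipation term contributes $\nu_\v\e^2\p_z^2\buh$, which is controlled provided $\nu_\v\e^2$ stays bounded by the slow-variable regularity — accounting for the factor $\e^{1-\th}\nu_\h^{-1+\th/2}\nu_\v^{-\th/2}$ in \eqref{S1eq1}.

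Next I would set up the equation for the remainder $R$, which also must absorb the genuinely $3$-D rough part $v_0$. The key functional framework is the anisotropic Besov space $B^{0,1/2}(\R^3)$ with $\na u\in L^2(\R^+;B^{0,1/2})$, i.e. Chemin–Lerner type spaces built on Littlewood–Paley decomposition in the vertical variable. The idea is a fixed-point / continuity argument: $R$ solves a Navier-Stokes-type equation with forcing coming from (i) the residual of $u_{\mathrm{app}}^\e$, (ii) the initial datum $v_0$, (iii) the cross terms $R\cdot\na u_{\mathrm{app}}^\e + u_{\mathrm{app}}^\e\cdot\na R$. One would run energy estimates in $B^{0,1/2}$, using the strong vertical dissipation $\nu_\v\p_3^2$ to control the vertical derivative losses in the anisotropic product laws (a vertical derivative costs $2^{j/2}$ on the $j$-th dyadic block, gained back from $\nu_\v^{1/2}2^j$ in the dissipation, at the cost of a power of $\nu_\v$), and the horizontal dissipation $\nu_\h\Delta_\h$ for the horizontal part; the term $\cA_0^{1/2}(1+\cA_0^{1/2})(\nu_\h^{-7/8}+\nu_\h^{1/8})\nu_\v^{-1/8}$ in \eqref{S1eq1} is exactly the smallness needed to close the estimate for the $v_0$-generated part together with its interaction with $u_{\mathrm{app}}^\e$. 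The transport term $u_{\mathrm{app}}^\e\cdot\na R$ is handled by noting $u_{\mathrm{app}}^\e$ has only horizontal components and good horizontal regularity, so it loses no vertical derivative; the dangerous term $R^3\p_3 u_{\mathrm{app}}^\e$ is tamed using $\dive$-freeness to rewrite $R^3 = -\int^{x_3}\dive_\h R^\h$, trading the vertical component for a horizontal derivative that the horizontal dissipation absorbs.

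The main obstacle — and the heart of the argument — is closing the nonlinear estimate for $R$ uniformly in time with the precise powers of $\nu_\h$ and $\nu_\v$ displayed in \eqref{S1eq1}. One has to juggle three competing effects: the loss of vertical derivatives in anisotropic Bony paraproducts (cured only at a cost in $\nu_\v$), the weakness of the horizontal viscosity (only $\nu_\h$, possibly as small as $\e$), and the requirement that the $\e$-dependent residual and the $\cA_0$-dependent nonlinear terms both be subordinate to $\nu_\h$. Concretely, one estimates $\|R\|_{\widetilde L^\infty_t(B^{0,1/2})}^2 + \nu_\h\|\na_\h R\|_{\widetilde L^2_t(B^{0,1/2})}^2 + \nu_\v\|\p_3 R\|_{\widetilde L^2_t(B^{0,1/2})}^2$ against a bootstrap bound $\eta^2$, bounds each forcing and nonlinear contribution by (data factors)$\times\eta$ or (data factors)$\times\eta^2$ with the stated $\nu$-powers, and invokes a continuation argument: as long as the bootstrap holds, \eqref{S1eq1} with $c_0$ small forces the new bound to be $\le \eta^2/2$, so the solution extends to all of $\R^+$. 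Uniqueness follows from an energy estimate on the difference of two solutions in a space with one degree less regularity, again using the two dissipations to control the bilinear terms. Finally, combining the global bound on $\buh$ (hence on $u_{\mathrm{app}}^\e$) with the global bound on $R$ yields $u = u_{\mathrm{app}}^\e + R \in C_b(\R^+;B^{0,1/2}(\R^3))$ with $\na u\in L^2(\R^+;B^{0,1/2}(\R^3))$, which is the claimed conclusion.
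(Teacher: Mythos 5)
Your overall architecture (profile built from a two–dimensional flow in the slow variable, plus a remainder estimated in $B^{0,\f12}$ by a bootstrap) is the right one, but two of your specific choices break down precisely in the regime the theorem is about, and they are not cosmetic.

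First, you take $\buh(t,\cdot,z)$ to solve, for each fixed $z$, the purely two–dimensional Navier--Stokes equation, and you treat the vertical dissipation acting on $[\buh]_\e$, namely $\nu_\v\e^2[\p_z^2\buh]_\e$, as a residual, ``controlled provided $\nu_\v\e^2$ stays bounded.'' The theorem imposes no such bound: in the motivating regime $\nu_\h=\e$, $\nu_\v\sim\e^{-15}$ one has $\nu_\v\e^2\sim\e^{-13}$, so this residual is enormous and your error estimate cannot close. The paper avoids the issue by building the term into the profile equation \eqref{S2eq2}, i.e.\ $\buh$ solves $\p_t\buh+\buh\cdot\na_\h\buh-\nu_\h\D_\h\buh-\nu_\v\e^2\p_z^2\buh=-\na_\h p^\h$, so that $[\buh]_\e$ satisfies the full anisotropic equation exactly and the only residual from this block is the pressure term $\p_3[p^\h]_\e=\e[\p_z p^\h]_\e$ (which is the actual source of the factor $\e^{1-\th}\nu_\h^{-1+\f\th2}\nu_\v^{-\f\th2}$ in \eqref{S1eq1}, not the vertical diffusion as you suggest). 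Second, you let the remainder ``absorb'' $v_0$, i.e.\ you feed $v_0$ into $R$ as data or forcing. But $v_0$ is \emph{not} small: condition \eqref{S1eq1} only requires $\cA_0$ times a negative power of $\nu_\v$ to be small, so $\|v_0\|_{B^{0,\f12}}$ can be arbitrarily large once $\nu_\v$ is large, and a bootstrap starting from $R(0)=v_0$ never gets off the ground. The missing idea is to subtract the free anisotropic heat evolution $v_L$ of $v_0$ (equation \eqref{S2eq1}) and set $R(0)=0$; the point of Proposition \ref{S3prop1} is that the large vertical viscosity makes $\|\p_3 v_L\|_{\wt L^2_t(B^{0,\f12})}\lesssim\nu_\v^{-\f12}\|v_0\|_{B^{0,\f12}}$ and, by interpolating half a horizontal against half a vertical smoothing, $\|\na_\h v_L\|_{\wt L^2_t(B^{0,\f12})}\lesssim(\nu_\h\nu_\v)^{-\f14}\|v_0\|_{\cB^{\f12,0}}$, so that $v_L\cdot\na v_L$ and the cross terms with $R$ and $[\buh]_\e$ are small with the $\nu_\v^{-\f18}$ gain appearing in \eqref{S1eq1}. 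Without this decomposition there is no mechanism by which largeness of $\nu_\v$ compensates largeness of $v_0$. (A further, more technical point: closing the Gronwall step in Chemin--Lerner norms requires the time–weighted norms of Definition \ref{defpz} with exponential weights built from $\|\buh\|_{B^{0,\f12}}^2\|\na_\h\buh\|_{B^{0,\f12}}^2$ and related quantities; your sketch does not indicate how you would avoid the known obstruction to applying Gronwall directly in $\wt L^\infty_t(B^{0,\f12})$.)
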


The definitions of the Besov norms will be presented in Section \ref{Sect3}.
The exact form of the constant $\cA_{\d,\nu_h}(\buh_0)$ will be given by \eqref{S5eq4}.

\begin{rmk}
\begin{itemize}

\item[(1)] We remark that the norms of $v_0$ in both $\cB^{\f12,0}(\R^3)$ and $ B^{0,\f12}(\R^3)$ are scaling invariant under the scaling
transformation \eqref{S1eq2}.

\item[(2)] Similar result holds for \eqref{NS} in a regular bounded domain with Dirichlet boundary condition for the velocity field. Indeed the
first eigenvalue of  the operator,
$A\eqdefa -\nu_\h\D_\h-\nu_\v\partial_3^2$,  is bigger than $\max\left(\nu_\h,\nu_\v\right)$. On the other hand,  Avrin \cite{Avrin} proved that as long as
$\|u_0\|_{D(A^{\frac{1}{2}})}\leq \min\left(\nu_\h,\nu_\v\right)\lambda_1^{\frac 14}, $ \eqref{NS}  has a unique global solution.

\item[(3)] Let us note that in the periodic case if the vertical viscosity, $\nu_\v,$ in \eqref{NS} is big enough,
the spectrum of the Laplace on functions with null vertical mean is
pushed far from zero, which ensures the exponential decay of the
linear solution.
So that in order to prove the global well-posedness of \eqref{NS} in $\TT^3,$  we need first to decompose the solution into a 2D part and a 3D
part with vertical null mean, and  then to  obtain that the interactions between the
2D part and the 3D part are small. We shall not present detail here.
\end{itemize}
\end{rmk}

In the case when $\nu_\h=0$ in \eqref{NS}, that is
 \begin{equation}\label{S1eq4}
\begin{cases}
\partial_t u+u\cdot\nabla u-\nu_\v\partial_3^2 u=-\nabla p \quad\mbox{for}\ (t,x)\in \R^+\times\Om,\\
\dive u=0,\\
u|_{t=0}=u_0,\\
u|_{x_3=0}=u|_{x_3=1}=0,
\end{cases}
\end{equation}
where $\Omega=\R^2\times ]0,1[,$ we have the following global well-posedness result for \eqref{S1eq4}:

\begin{thm}\label{thm3}
{\sl Given solenoidal vector field $u_0\in L^2\cap\cB_h^2(\Om),$ there exists a small enough positive constant
$c$ so that if
\beq \label{S1eq5}
\|u_0\|_{L^2}+\|u_0\|_{\cB_\h^2}\leq c\nu_\v,
\eeq
\eqref{S1eq4} has a unique solution
$u\in C_b([0,\infty[; L^2\cap \cB_h^2(\Om))$ with $\p_3u\in L^2(\R^+; L^2\cap \cB_h^2(\Om)).$}
\end{thm}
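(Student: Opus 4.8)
The strategy is to treat \eqref{S1eq4} as a perturbation of a linear heat-type equation in the vertical variable whose dissipation rate is governed by $\nu_\v$, and to close an a priori estimate in the anisotropic space $L^2\cap\cB_\h^2(\Om)$. Since $\Om=\R^2\times\,]0,1[$ with Dirichlet conditions at $x_3=0,1$, the vertical Laplacian $-\p_3^2$ on functions vanishing at the endpoints has spectrum bounded below by $\pi^2$, so $-\nu_\v\p_3^2$ provides an effective damping of size $\nu_\v$ on the whole solution (not merely on the vertical-mean-free part, as in the periodic discussion of Remark~(3)). The first step is to perform the basic energy estimate: multiply the equation by $u$, integrate over $\Om$, use $\dive u=0$ to kill the convection and pressure terms, and the Poincar\'e inequality $\|u\|_{L^2}^2\le\pi^{-2}\|\p_3u\|_{L^2}^2$ to obtain exponential decay of $\|u(t)\|_{L^2}^2$ together with the bound $\int_0^\infty\|\p_3u\|_{L^2}^2\,dt'\lesssim\nu_\v^{-1}\|u_0\|_{L^2}^2$.

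The heart of the matter is the estimate in the horizontal Besov space $\cB_\h^2$ (the space of functions whose horizontal dyadic blocks $\Delta_k^{\rm h}u$ are summable after multiplication by $2^{2k}$, or the analogous Chemin--Lerner version in time). I would apply $\Delta_k^{\rm h}$ to \eqref{S1eq4}, take the $L^2(\Om)$ inner product with $\Delta_k^{\rm h}u$, and estimate the trilinear term $\int_\Om \Delta_k^{\rm h}(u\cdot\nabla u)\cdot\Delta_k^{\rm h}u$ by Bony's anisotropic paradifferential decomposition in the horizontal variable. Each piece is controlled using the product laws in $\cB_\h^s$ together with the one-dimensional vertical structure: the vertical component $u^3$ and the factor $\nabla_\h$ are traded against the dissipation $\nu_\v\|\p_3\Delta_k^{\rm h}u\|_{L^2}$ via Poincar\'e and, where the vertical derivative falls on the "wrong" factor, via the bound $\|f\|_{L^\infty_\v(L^2_\h)}\lesssim\|f\|_{L^2_\h}^{1/2}\|\p_3 f\|_{L^2_\h}^{1/2}$ (again a consequence of Dirichlet conditions). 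Summing the resulting differential inequalities over $k\in\Z$ with weight $2^{2k}$ and using the Poincar\'e inequality in the vertical direction to absorb the linear term into the left side, one arrives at
\beq
\|u\|_{L^\infty_t(L^2\cap\cB_\h^2)}^2+\nu_\v\|\p_3 u\|_{L^2_t(L^2\cap\cB_\h^2)}^2\le \|u_0\|_{L^2\cap\cB_\h^2}^2+\frac{C}{\nu_\v}\bigl(\|u\|_{L^\infty_t(L^2\cap\cB_\h^2)}+\|u\|_{L^\infty_t(L^2\cap\cB_\h^2)}^2\bigr)\|\p_3 u\|_{L^2_t(L^2\cap\cB_\h^2)}^2,
\eeq
so that under the smallness hypothesis \eqref{S1eq5} a continuity (bootstrap) argument forces the global bound, uniform on $[0,\infty[$. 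The main obstacle here is the bookkeeping of the anisotropic product estimates: one must make sure that in \emph{every} term of the Bony decomposition there is at least one vertical derivative available to be paired with the $\nu_\v\p_3$ dissipation, which is exactly where the divergence-free condition $\p_3 u^3=-\dive_\h u^\h$ is used to shift vertical derivatives off $u^3$.

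Finally, existence of a solution is obtained by the usual Friedrichs regularization or Galerkin scheme: construct approximate solutions $u_n$ for which the above a priori estimates hold uniformly, pass to the limit using the uniform bound in $L^\infty_t(L^2\cap\cB_\h^2)$ and $L^2_t$ of $\p_3 u_n$ (which, together with the equation, controls $\p_t u_n$ in a negative-regularity space and yields strong compactness in $L^2_{\rm loc}$), and check that the limit solves \eqref{S1eq4}. The membership $u\in C_b([0,\infty[;L^2\cap\cB_\h^2)$ follows from the uniform estimates plus the equation. Uniqueness is proved by an energy estimate on the difference $w=u_1-u_2$ of two solutions in the same class: writing the equation for $w$, testing against $w$ in $L^2(\Om)$, and estimating $\int_\Om (w\cdot\nabla u_1)\cdot w$ and $\int_\Om(u_2\cdot\nabla w)\cdot w$ with the anisotropic inequalities above, all the convective terms carry a factor that is integrable in time and small multiple of $\nu_\v\|\p_3 w\|_{L^2}^2$, so Gronwall's lemma gives $w\equiv 0$. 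I expect no new difficulty in the uniqueness step beyond reusing the product estimates established for the a priori bound.
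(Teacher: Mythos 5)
Your plan follows essentially the same route as the paper's proof: a basic $L^2$ energy estimate combined with the vertical Poincar\'e inequality on $\Om$, then horizontal dyadic blocks $\Delta_k^{\rm h}$ with Bony's decomposition in the horizontal variable, using $\dive_\h u^\h=-\p_3u^3$ and the trace-type bound $\|f\|_{L^\infty_\v(L^2_\h)}\lesssim\|f\|_{L^2}^{1/2}\|\p_3f\|_{L^2}^{1/2}$ to pair every term with the $\nu_\v\p_3^2$ dissipation, closed by a bootstrap under \eqref{S1eq5}. One bookkeeping caution: your displayed closed inequality carries an extra factor $\nu_\v^{-1}$ on the cubic term, which as written would demand $\|u_0\|\lesssim\nu_\v^2$ rather than $c\nu_\v$; the correct trilinear bound (the paper's Lemma \ref{S6lem2}) is $\|u\|_{\wt{L}^\infty_t(\cB^2_\h)}\bigl(\|\p_3u\|_{L^2_t(L^2)}^2+\|\p_3 u\|_{\wt{L}^2_t(\cB^2_\h)}^2\bigr)$ with no inverse power of $\nu_\v$, so absorption into $\nu_\v\|\p_3u\|^2$ needs only $\|u\|\lesssim\nu_\v$, consistent with \eqref{S1eq5}.
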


We remark that the boundary condition in \eqref{S1eq4} is quite natural, which corresponds to  the Dirichlet boundary condition  for  1-D heat equation in
a bounded interval. So far, we still do not know how to prove similar version of Theorem \ref{thm3} in the whole space case.

\smallskip

Let us complete this section by the notations in this context.

Let $A, B$ be two operators, we denote $[A;B]=AB-BA,$ the commutator
between $A$ and $B$. For $a\lesssim b$, we mean that there is a
uniform constant $C,$ which may be different on different lines,
such that $a\leq Cb$.  We denote by $(a|b)$ the $L^2(\R^3)$ inner
product of $a$ and $b,$ $(d_\ell)_{\ell\in\Z}$ will be a generic element of $\ell^1(\Z)$
 so that $\sum_{\ell\in\Z}d_\ell=1$.
For $X$ a Banach space and $I$ an interval of $\R,$ we denote by
${C}(I;\,X)$ the set of continuous functions on $I$ with values in
$X,$  and by ${{C}}_b(I;\,X)$ the subset of bounded
functions of ${C}(I;\,X).$  For $q\in[1,+\infty],$ the
notation $L^q(I;\,X)$ stands for the set of measurable functions on
$I$ with values in $X,$ such that $t\longmapsto\|f(t)\|_{X}$ belongs
to $L^q(I).$

 \setcounter{equation}{0}
\section{ Ideas of the proof and structure of the paper}\label{Sect2}

In what follows, we shall always denote
\beno [f]_\e(t,x_\h,x_3)\eqdefa f(t,x_\h,\e x_3). \eeno

In order to deal with $[\buh_0]_\e$ which appears in \eqref{S1eq3}, motivated by \cite{CG10, CZ5},
we construct  $(\bar{u}^\h, p^\h)$ through
\begin{equation}\label{S2eq2}
\begin{cases}
\partial_t\buh+\buh\cdot\na_\h\buh-\nu_\h\Delta_\h \buh-\nu_{\rm v}\e^2\partial_z^2 \buh=-\na_\h p^\h\quad\mbox{for}\ (t,x_\h,z)\in \R^+\times\R^3,\\
\dive_\h\buh =0,\\
\buh|_{t=0}=\buh_0.
\end{cases}
\end{equation}
By taking $\dive_\h$ to the momentum equation of \eqref{S2eq2}, we find
\beq \label{S2eq5}
p^\h=\sum_{i,j=1}^2(-\D_\h)^{-1}\p_i\p_j(\bar{u}^i\bar{u}^j).
\eeq
To handle $v_0$ in \eqref{S1eq3},  we construct $v_L$ via
\begin{equation}\label{S2eq1}
\begin{cases}
\partial_t v_L-\nu_\h\Delta_\h v_L-\nu_{\rm v}\partial_3^2 v_L=0\quad\mbox{for}\ (t,x)\in \R^+\times\R^3,\\
v_L|_{t=0}=v_0.
\end{cases}
\end{equation}
Then the strategy to the proof of Theorem \ref{thPZ9} will be as follows:
we first write
\beq\label{S2eq3}
u=v_L+[\buh]_\e+R \andf \pi=p-p^\h.
\eeq
It follows form \eqref{NS}, \eqref{S2eq2} and \eqref{S2eq1}  that $(R, \na\pi)$ verifies
\begin{equation}\label{S2eq4}
\begin{cases}
\partial_tR+u\cdot\na R+R\cdot\na(v_L+[\buh]_\e)-\nu_\h\Delta_\h R-\nu_{\rm v}\partial_3^2 R+\na \pi=F,\\
 \with F=(F^\h, F^\v) \andf \\
F^\h=-v_L\cdot\na(v_L^\h+[\buh]_\e)
-[\buh]_\e\cdot\na_\h v_L^\h \andf\\
 F^{\v}=-v_L\cdot\na v_L^3-[\buh]_\e\cdot\na_h v_L^3-\p_3[p^\h]_\e,\\
\dive R =0,\\
R|_{t=0}=0.
\end{cases}
\end{equation}
We are going to prove that \eqref{S2eq4} has a unique global solution under the smallness condition \eqref{S1eq1}.

In order to exploring  the main idea to the proof of Theorem \ref{thPZ9}. Let us first assume $\buh_0=0$ in \eqref{S1eq3}. Moreover, to avoid technicality,
 we assume $v_0\in H^1\cap L^\infty(\R^3).$ Then we have the following
simplified version of Theorem \ref{thPZ9}.

\begin{thm}\label{thm2}
{\sl Let $v_0\in H^{1}\cap L^\infty$ be a solenoidal vector field, we denote
\beq \label{S2eq8}
E(v_0)\eqdefa \|v_0\|_{L^2}^3\|\nabla_\h v_0\|_{L^2}+\|v_0\|^2_{L^\infty}\|v_0\|^2_{L^2}.\eeq  We assume that
 $\nu_\v\geq \nu_\h>0.$  Then there exist two positive constants $c, C$ so that if
\beq\label{S2eq6}
\frac{E(v_0)}{\sqrt{\nu_\h \nu_\v}}\exp\Bigl(\frac{\|v_0\|_{H^{0,1}}^4}{C\nu_\h^4}
\Bigr)\leq
c\nu_\h^3, \eeq
\eqref{NS} with initial data $v_0$ has a unique global solution
$u$ with
$$u\in C_b([0,\infty[; H^{0,1})\hskip0.5cm\text{and}\hskip0.5cm\nabla u\in L^2(\R_+; H^{0,1}).$$
Here $H^{0,1}(\R^3)$ denotes the space of functions $f$ with both $f$ and $\p_3f$ belonging to $L^2(\R^3).$}
\end{thm}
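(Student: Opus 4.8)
The plan is to prove Theorem \ref{thm2} by an energy estimate in the anisotropic space $H^{0,1}$, combined with a continuation argument. First I would observe that since $v_0\in H^1\cap L^\infty$, the classical Fujita--Kato theory (or Leray theory plus parabolic smoothing) gives a unique local-in-time strong solution $u$ on some maximal interval $[0,T^*)$, with $u\in C([0,T^*);H^{0,1})$ and $\na u\in L^2_{\rm loc}([0,T^*);H^{0,1})$; the whole point is then to show that under \eqref{S2eq6} the relevant norm cannot blow up, so $T^*=\infty$. The key is to track two quantities: the basic energy $\|u(t)\|_{L^2}^2$ together with $\nu_\h\int\|\na_\h u\|_{L^2}^2+\nu_\v\int\|\p_3 u\|_{L^2}^2$, which is standard and gives $\|u(t)\|_{L^2}\le\|v_0\|_{L^2}$; and the weighted quantity $\|\p_3 u(t)\|_{L^2}^2$ with its dissipation $\nu_\h\int\|\na_\h\p_3u\|_{L^2}^2+\nu_\v\int\|\p_3^2u\|_{L^2}^2$.

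The main work is the estimate for $\p_3 u$. Applying $\p_3$ to $(NS)$ and taking the $L^2$ inner product with $\p_3 u$, using $\dive u=0$ to kill the pressure term and the transport term $(u\cdot\na u|\p_3u)=0$, one is left to control the commutator term, schematically $-(\p_3u\cdot\na u\,|\,\p_3u)$, i.e. $\int |\na u|\,|\p_3u|^2$. The strategy is to split $\na u=\na_\h u+\p_3 u$ and use anisotropic Sobolev/Ladyzhenskaya-type inequalities: for the horizontal part one estimates $\int|\na_\h u|\,|\p_3u|^2$ by interpolating $\p_3 u$ between $\|\p_3u\|_{L^2}$, $\|\na_\h\p_3u\|_{L^2}$ and $\|\p_3^2u\|_{L^2}$ (the anisotropic inequality $\|f\|_{L^2}\lesssim\|f\|_{L^2}^{1/2}\|\p_3 f\|_{L^2}^{1/2}$-type bounds in the vertical variable, plus Gagliardo--Nirenberg in the horizontal ones), absorbing derivatives into the dissipation at the cost of factors of $\nu_\h^{-1}$; for the part involving $\p_3 u$ itself one uses the $L^\infty$ control. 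The delicate bookkeeping is to arrange all of this so that, after absorbing, the remaining lower-order factor is controlled by $\|v_0\|_{L^2}$, $\|v_0\|_{L^\infty}$ and (via Gronwall) an exponential of $\int\|\na_\h u\|_{L^2}^2\,dt'\lesssim \|v_0\|_{H^{0,1}}^2/\nu_\h$ — actually one needs $\|v_0\|_{H^{0,1}}^4/\nu_\h^4$, which suggests the Gronwall factor comes from a term like $\|\na_\h u\|_{L^2}^2\|\p_3 u\|_{L^2}^2$ times $\nu_\h^{-3}$ or similar, bounded using the energy estimate $\int\|\na_\h u\|_{L^2}^2\lesssim\nu_\h^{-1}\|v_0\|_{L^2}^2$ again. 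The precise powers in \eqref{S2eq6} — the $\sqrt{\nu_\h\nu_\v}$ in the denominator and $\nu_\h^3$ on the right — will come out of carefully counting how many times the large coefficient $\nu_\v$ can be used to beat down vertical derivatives (each $\p_3^2$ bought at price $\nu_\v^{-1}$, and only half-powers surviving because $\p_3 u$ appears squared).

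Concretely, I would aim to prove a differential inequality of the form
\beq\label{planineq}
\f{d}{dt}\|\p_3u\|_{L^2}^2+\nu_\h\|\na_\h\p_3u\|_{L^2}^2+\nu_\v\|\p_3^2u\|_{L^2}^2
\lesssim \f{1}{\nu_\h^3}\|\na_\h u\|_{L^2}^2\|\p_3u\|_{L^2}^2+\f{1}{\sqrt{\nu_\h\nu_\v}}\,G(t),
\eeq
where $G(t)$ collects the terms whose time integral is bounded by $E(v_0)$ (after using the basic energy bound), so that Gronwall with the factor $\exp(C^{-1}\nu_\h^{-4}\|v_0\|_{H^{0,1}}^4)$ yields $\|\p_3 u(t)\|_{L^2}^2\le C\nu_\h^3$ for all $t<T^*$ under \eqref{S2eq6}; combined with the energy bound this keeps the $H^{0,1}$ norm finite and forces $T^*=\infty$. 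Uniqueness follows from the same type of energy estimate applied to the difference of two solutions in, say, $L^2$ with the $H^{0,1}$-bound used to control the nonlinear terms.

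The main obstacle I expect is the sharp tracking of the viscosity powers in the trilinear term $\int|\na u||\p_3 u|^2$: one must choose the interpolation of $\p_3 u$ in the vertical direction so as to use the large $\nu_\v$ optimally (to get $(\nu_\h\nu_\v)^{-1/2}$ rather than, say, $\nu_\h^{-1}$) while keeping the horizontal part absorbable into $\nu_\h\|\na_\h\p_3u\|_{L^2}^2$ and isolating a clean Gronwall term $\nu_\h^{-3}\|\na_\h u\|_{L^2}^2\|\p_3u\|_{L^2}^2$. Getting the exponent $4$ in $\|v_0\|_{H^{0,1}}^4/\nu_\h^4$ exactly — rather than a cruder bound — is the quantitatively delicate point, and it is what makes the statement strong enough to be a meaningful prototype of Theorem \ref{thPZ9}.
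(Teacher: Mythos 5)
There is a genuine gap: your plan is a direct a priori estimate on the solution $u$ itself in $H^{0,1}$, and this cannot close under the hypothesis \eqref{S2eq6}, which allows $\|v_0\|_{H^{0,1}}$ to be arbitrarily large compared with $\nu_\h$ (only $\nu_\v$ is required to be large). In any direct estimate the self-interaction term is, by the anisotropic trilinear inequality, of the form $\bigl|(u\cdot\nabla u\,|\,u)_{H^{0,1}}\bigr|\lesssim \|u\|_{H^{0,1}}\|\nabla_\h u\|_{H^{0,1}}^2$; absorbing it into the horizontal dissipation $\nu_\h\|\nabla_\h u\|_{H^{0,1}}^2$ requires $\|u\|_{H^{0,1}}\leq c\nu_\h$, which already fails at $t=0$, and no amount of Gronwall helps because the offending factor multiplies the dissipation itself. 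Two further ingredients of your sketch are unavailable for the nonlinear solution: the $L^\infty$ bound (there is no maximum principle for Navier--Stokes because of the pressure), and the $L^4_tL^2$-type control of $\nabla_\h u$ that your Gronwall coefficient would need. Your proposed inequality \eqref{planineq} also posits a ``source'' $G(t)$ with $\int G\lesssim E(v_0)$, but a direct estimate on $u$ produces no such inhomogeneous term --- every term is at least cubic in $u$ and must be absorbed, not integrated.

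The missing idea is the decomposition $u=v_L+R$, where $v_L$ solves the \emph{linear} anisotropic heat equation \eqref{S2eq1} with data $v_0$ and $R$ has zero initial data. For the linear flow one has the maximum principle $\|v_L\|_{L^\infty}\leq\|v_0\|_{L^\infty}$ and the smallness $\|\partial_3 v_L\|_{L^2_t(L^2)}^2\leq \|v_0\|_{L^2}^2/(2\nu_\v)$, which combine (Lemma \ref{S2lem3}) to give $\int_0^\infty\|v_L\otimes v_L\|_{H^{0,1}}^2\,dt\lesssim E(v_0)/\sqrt{\nu_\h\nu_\v}$ --- this is exactly where the factor $(\nu_\h\nu_\v)^{-1/2}$ and the quantity $E(v_0)$ come from, and it is what turns ``large data, large $\nu_\v$'' into ``zero data, small forcing'' for the $R$-equation. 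The self-interaction $R\cdot\nabla R$ is then absorbed via the bootstrap $\|R\|_{L^\infty_T(H^{0,1})}\leq\nu_\h/4$, the cross terms $v_L\cdot\nabla R$ and $R\cdot\nabla v_L$ are handled by Gronwall (producing the exponential in \eqref{S2eq6}), and the source $v_L\cdot\nabla v_L$ gives the small right-hand side. Your anisotropic trilinear estimates and your identification of the Gronwall exponent are the right local tools, but without the splitting the global architecture does not work.
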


We begin the proof of the above theorem by  the following useful lemmas.

\begin{lem}
\label{S2lem1}
{\sl  Let $a=(a^\h,a_3)$ be a solenoidal vector
field in $H^{0,1}$ with $\nabla_\h a$ belonging to $H^{0,1}$.
Let $b\in H^{0,1}$ with $\nabla_\h b\in H^{0,1}$. Then one has
$$
\bigl(a\cdot\nabla b | b\bigr)_{H^{0,1}}\leq C\|a\|_{H^{0,1}}^{\frac 12}
\|\nabla_\h a\|_{H^{0,1}}^{\frac 12}\|b\|_{H^{0,1}}^{\frac
12}\|\nabla_\h b\|_{H^{0,1}}^{\frac 32}.$$}
\end{lem}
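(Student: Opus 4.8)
\textbf{Proof strategy for Lemma \ref{S2lem1}.}

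The plan is to expand the $H^{0,1}$ inner product and reduce everything to two types of estimates: a purely horizontal product estimate (in the horizontal variables, at a fixed height $x_3$) and the use of the one-dimensional Gagliardo--Nirenberg inequality in the vertical variable. Writing $(f|g)_{H^{0,1}} = (f|g)_{L^2} + (\p_3 f|\p_3 g)_{L^2}$, we must control $(a\cdot\na b|b)_{L^2}$ and $(\p_3(a\cdot\na b)|\p_3 b)_{L^2}$. For the second term we use the divergence-free condition $\dive a = \dive_\h a^\h + \p_3 a_3 = 0$; after an integration by parts in the term $(a\cdot\na b|b)_{L^2}$ one sees that it vanishes (the usual antisymmetry of transport), so the only genuine contribution comes from the commutator $[\p_3, a\cdot\na]b = (\p_3 a)\cdot\na b$. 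Hence the whole left-hand side reduces, up to harmless integrations by parts, to terms of the schematic form $\int (\p_3 a)\cdot\na b\,\p_3 b\,dx$, together with lower-order rearrangements.

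The next step is the key anisotropic bound. For a function of the form $FGH$ integrated over $\R^3$, I would first apply, for each fixed $x_3$, the two-dimensional estimate $\|f\|_{L^4_\h}^2 \lesssim \|f\|_{L^2_\h}\|\na_\h f\|_{L^2_\h}$ (Ladyzhenskaya) to distribute the horizontal integrability, and then apply in the vertical variable the one-dimensional Gagliardo--Nirenberg inequality $\|g\|_{L^\infty_\v}^2 \lesssim \|g\|_{L^2_\v}\|\p_3 g\|_{L^2_\v}$ to whichever factor needs to be taken in $L^\infty_\v$. Concretely, one factor ($a$ or $\p_3 a$) is placed in $L^\infty_\v L^4_\h$, which by the two inequalities above costs $\|a\|^{1/2}\|\p_3 a\|^{1/2}\|\na_\h a\|^{1/2}\|\na_\h\p_3 a\|^{1/2}$-type quantities, i.e. a product of $H^{0,1}$-norms of $a$ and $\na_\h a$; the derivative factors $\na b$ and $\p_3 b$ are kept in $L^2_\v L^4_\h$, costing $\|\na_\h b\|_{H^{0,1}}$ and $\|\na_\h\p_3 b\|$ after another Ladyzhenskaya step. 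Carefully counting powers — using $\|\p_3 a\|_{L^2}\le \|a\|_{H^{0,1}}$, $\|\na_\h\p_3 b\|_{L^2}\le\|\na_\h b\|_{H^{0,1}}$, and bounding lower derivatives by the full norms — should collapse exactly to $\|a\|_{H^{0,1}}^{1/2}\|\na_\h a\|_{H^{0,1}}^{1/2}\|b\|_{H^{0,1}}^{1/2}\|\na_\h b\|_{H^{0,1}}^{3/2}$, with the asymmetry between $b$ (power $1/2$) and $\na_\h b$ (power $3/2$) coming from the fact that the transport structure forces one vertical derivative and one horizontal gradient to land on $b$, while the remaining factor of $b$ is the only one carrying no derivative.

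The main obstacle I anticipate is bookkeeping rather than any deep difficulty: namely, organizing the several terms produced by expanding $\p_3(a\cdot\na b)$ and the integrations by parts so that each one genuinely admits the distribution of norms described above, and in particular making sure that the term where $\p_3$ hits $b$ twice (giving $a\cdot\na(\p_3 b)\,\p_3 b$) is handled by the same transport-antisymmetry trick using $\dive a = 0$, so that it too reduces to a commutator-type term with a horizontal derivative on $b$ rather than a second vertical derivative (which is not available in the hypotheses). Once the terms are correctly grouped, each is a routine application of Ladyzhenskaya in $x_\h$ plus one-dimensional Gagliardo--Nirenberg in $x_3$, and the stated exponents follow by tallying.
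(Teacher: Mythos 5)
Your strategy is essentially the paper's: expand the $H^{0,1}$ pairing, use $\dive a=0$ to kill both $(a\cdot\na b\,|\,b)_{L^2}$ and $(a\cdot\na\p_3 b\,|\,\p_3 b)_{L^2}$ (the latter vanishes outright rather than leaving a commutator, but your point --- that no $\p_3^2 b$ is ever needed --- is the right one), and then estimate the surviving terms $(\p_3 a^\h\cdot\na_\h b\,|\,\p_3 b)_{L^2}$ and $(\p_3 a_3\,\p_3 b\,|\,\p_3 b)_{L^2}$ anisotropically. For the first of these your recipe ($L^2_\v(L^4_\h)\times L^\infty_\v(L^2_\h)\times L^2_\v(L^4_\h)$ H\"older, horizontal Ladyzhenskaya, vertical one-dimensional Gagliardo--Nirenberg) is exactly what the paper does. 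The one place where you diverge is the term $\int\p_3 a_3|\p_3 b|^2\,dx=-\int\dive_\h a^\h\,|\p_3 b|^2\,dx$: the direct H\"older--Ladyzhenskaya bound here yields $\|\na_\h a\|_{H^{0,1}}\|b\|_{H^{0,1}}\|\na_\h b\|_{H^{0,1}}$, which is not the stated right-hand side (a full power of $\|\na_\h a\|_{H^{0,1}}$ instead of $\|a\|_{H^{0,1}}^{1/2}\|\na_\h a\|_{H^{0,1}}^{1/2}$), and you cannot interpolate your way out because no second horizontal derivative of $a$ is in the hypotheses. The paper handles this by pairing $\dive_\h a^\h$ against $|\p_3 b|^2$ in the $\dH^{-1/2}_\h\times\dH^{1/2}_\h$ duality, using a horizontal product law and the trace-type inequality $\|f\|_{L^\infty_\v(\dH^{1/2}_\h)}\lesssim\|f\|_{L^2}^{1/2}\|\na_\h\p_3 f\|_{L^2}^{1/2}$. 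Within your own toolbox the same end is reached by one more integration by parts, $\int\dive_\h a^\h\,|\p_3 b|^2\,dx=-2\int(a^\h\cdot\na_\h\p_3 b)\cdot\p_3 b\,dx$, after which placing $a^\h\in L^\infty_\v(L^4_\h)$, $\na_\h\p_3 b\in L^2$, $\p_3 b\in L^2_\v(L^4_\h)$ gives precisely $\|a\|_{H^{0,1}}^{1/2}\|\na_\h a\|_{H^{0,1}}^{1/2}\|b\|_{H^{0,1}}^{1/2}\|\na_\h b\|_{H^{0,1}}^{3/2}$. Since you explicitly flagged this kind of rearrangement as the remaining bookkeeping, the plan closes; just be aware that this is the one term that genuinely requires it.
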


\begin{proof} We recall that $\|a\|_{H^{0,1}}=\bigl(\|a\|_{L^2}^2+\|\p_3a\|_{L^2}^2\bigr)^{\f12}.$
It is easy to observe that due to $\dive a=0,$ $\bigl(a\cdot\nabla b | b\bigr)_{L^2}=0,$ so that there holds
\beno
\begin{split}
\bigl(a\cdot\nabla b | b\bigr)_{H^{0,1}}=&\bigl(a\cdot\nabla b | b\bigr)_{L^2}+\bigl(\partial_3 (a\cdot\nabla b) | \partial_3 b\bigr)_{L^2}\\
=&\bigl(a\cdot\nabla\partial_3 b | \partial_3 b\bigr)_{L^2}+\bigl(\partial_3 a\cdot\nabla b | \partial_3 b\bigr)_{L^2}\\
=&\bigl(\partial_3 a^\h\cdot\nabla_\h b | \partial_3 b\bigr)_{L^2}+\bigl(\partial_3 a_3\partial_3 b | \partial_3 b\bigr)_{L^2}.
\end{split}
\eeno
Notice that
\beno
\|f\|_{L^\infty_\v(L^2_\h)}\lesssim \|f\|_{L^2}^{\f12}\|\p_3 f\|_{L^2}^{\f12},
\eeno
and $\partial_3 a_3=-\dive_\h a^h$, we deduce from the law of product in Sobolev spaces that
\beno
\begin{split}
\mid \bigl(\dive_\h a^\h\partial_3 b | \partial_3 b\bigr)_{L^2}\mid\leq & \|\dive_h a^h\|_{L^\infty_\v(\dH^{-\f12}_\h)}\|(\partial_3 b)^2\|_{L^1_\v(\dH^{\f12}_\h)}\\
\leq & \|\nabla_\h a\|_{L^\infty_\v(\dH^{-\f12}_\h)}\|\p_3 b\|_{L^2_\v(\dH^{\f34}_\h)}^2\\
\lesssim &\|a\|_{L^\infty_\v(\dH^{\f12}_\h)}\|\p_3 b\|_{L^2}^{\f12}\|\na_\h\p_3 b\|_{L^2}^{\f32}.
\end{split}
\eeno
On the other hand, we write
\beno
\f{d}{d x_3}\int_{\R^2}|\xi_\h||\hat{f}(\xi_\h,x_3)|^2\,d\xi_\h=\int_{\R^2}|\xi_\h| \p_3|\hat{f}(\xi_\h,x_3)|^2\,d\xi_\h.
\eeno
Integrating the above equality over $]-\infty, x_3]$ and using H\"older's inequality, we achieve
\beno
\|f\|_{L^\infty_\v(\dH^{\f12}_\h)}\lesssim \|f\|_{L^2}^{\f12}\|\na_\h\p_3f\|_{L^2}^{\f12}.
\eeno
As a result, it comes out
\beq \label{S2eq7}
\mid \bigl(\dive_\h a^\h\partial_3 b | \partial_3 b\bigr)_{L^2}\mid
\lesssim \|a\|_{L^2}^{\f12}\|\na_\h\p_3a\|_{L^2}^{\f12}\|\p_3 b\|_{L^2}^{\f12}\|\na_\h\p_3 b\|_{L^2}^{\f32}.
\eeq
Whereas observing that
\beno
\begin{split}
\mid\bigl(\partial_3 a^\h\cdot\nabla_\h b | \partial_3 b\bigr)_{L^2}\mid\leq &\|\p_3a^\h\|_{L^2_\v(L^4_\h)}\|\na_\h b\|_{L^\infty_\v(L^2_\h)}\|\p_3b\|_{L^2_\v(L^4_\h)}\\
\lesssim &\|\p_3a^\h\|_{L^2}^{\f12}\|\na_\h\p_3a^\h\|_{L^2}^{\f12}\|\na_\h b\|_{L^2}^{\f12}\|\na_\h\p_3 b\|_{L^2}^{\f12}\|\p_3b\|_{L^2}^{\f12}\|\na_\h\p_3b\|_{L^2}^{\f12},
\end{split}
\eeno
which together with \eqref{S2eq7} ensures the lemma.
\end{proof}

The next lemma is concerned with the linear equation \eqref{S2eq1}, which tells us
 the small quantities that will be used in what follows.

\begin{lem}
\label{linear}
{\sl Let $v_0\in L^2$ with $\nabla_\h v_0\in
L^2$. Let $v_L$ be the corresponding solution of \eqref{S2eq1}. Then we have
$$\|\partial_3 v_L\|_{L^2_t(L^2)}^2 \leq
\frac{\|v_0\|_{L^2}^2}{2\nu_\v} \andf \|\partial_3\nabla_\h
v_L\|_{L^2_t(L^2)}^2\leq \frac{\|\nabla_\h v_0\|_{L^2}^2}{2\nu_\v}.$$}
\end{lem}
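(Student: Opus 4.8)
The plan is to derive both inequalities from the elementary energy identity for the constant-coefficient parabolic equation \eqref{S2eq1}. First I would take the $L^2(\R^3)$ inner product of $\pa_t v_L-\nu_\h\D_\h v_L-\nu_\v\pa_3^2 v_L=0$ with $v_L$. Since $-\D_\h$ and $-\pa_3^2$ are nonnegative self-adjoint operators, integrating by parts in the space variables gives
\[
\f12\f{d}{dt}\|v_L\|_{L^2}^2+\nu_\h\|\na_\h v_L\|_{L^2}^2+\nu_\v\|\pa_3 v_L\|_{L^2}^2=0.
\]
Integrating this identity over $[0,t]$ and dropping the two nonnegative terms $\f12\|v_L(t)\|_{L^2}^2$ and $\nu_\h\int_0^t\|\na_\h v_L\|_{L^2}^2\,dt'$ yields $\nu_\v\|\pa_3 v_L\|_{L^2_t(L^2)}^2\le\f12\|v_0\|_{L^2}^2$, which is the first bound.

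For the second bound I would use that \eqref{S2eq1} has constant coefficients, so that $\na_\h v_L$ solves exactly the same equation with initial datum $\na_\h v_0\in L^2$. Applying the estimate just obtained to $\na_\h v_L$ in place of $v_L$ directly gives $\nu_\v\|\pa_3\na_\h v_L\|_{L^2_t(L^2)}^2\le\f12\|\na_\h v_0\|_{L^2}^2$.

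Alternatively, and this bypasses any regularity discussion, both bounds follow at once on the Fourier side. Writing $\wh{v}_L(t,\xi)=e^{-t(\nu_\h|\xi_\h|^2+\nu_\v\xi_3^2)}\wh{v}_0(\xi)$ and carrying out the time integration first,
\[
\int_0^{+\infty}e^{-2t(\nu_\h|\xi_\h|^2+\nu_\v\xi_3^2)}\,dt=\f1{2(\nu_\h|\xi_\h|^2+\nu_\v\xi_3^2)}\le\f1{2\nu_\v\xi_3^2},
\]
so that by Plancherel's theorem $\|\pa_3 v_L\|_{L^2_t(L^2)}^2\le\f1{2\nu_\v}\int_{\R^3}|\wh{v}_0(\xi)|^2\,d\xi=\f1{2\nu_\v}\|v_0\|_{L^2}^2$, and inserting the extra weight $|\xi_\h|^2$ produces the bound for $\pa_3\na_\h v_L$. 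There is no genuine obstacle in this lemma; the only point requiring the (entirely standard) justification is the integration by parts and differentiation under the integral sign in the first approach, which is legitimate since $v_0,\na_\h v_0\in L^2$ makes $v_L$ smooth for $t>0$ with all the displayed quantities finite, while the Fourier computation sidesteps this altogether.
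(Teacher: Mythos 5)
Your first argument is exactly the paper's proof: the standard energy identities for $v_L$ and for $\na_\h v_L$ (which solves the same constant-coefficient equation), followed by dropping the nonnegative terms. The Fourier-side computation you add is a valid alternative, but the main route coincides with the paper's, so the proposal is correct and essentially identical in approach.
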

\begin{proof} Indeed by applying standard
 energy method to \eqref{S2eq1}, we get
\beno
\begin{split}
&\|v_L(t)\|_{L^2}^2+2\nu_\h\int_0^t\|\nabla_\h v_L(t')\|_{L^2}^2\,dt'+2\nu_\v\int_0^t\|\partial_3 v_L(t')\|^2_{L^2}\,dt'=\|v_0\|^2_{L^2},\\
&\|\na_\h v_L(t)\|_{L^2}^2+2\nu_\h\int_0^t\|\nabla_\h^2 v_L(t')\|_{L^2}^2\,dt'+2\nu_\v\int_0^t\|\partial_3 \na_\h v_L(t')\|^2_{L^2}=\|\na_\h v_0\|^2_{L^2},
\end{split}
\eeno
which implies the lemma.
\end{proof}

\begin{lem}\label{S2lem3}
{\sl Let $E(v_0)$ be given by \eqref{S2eq8}. Then under the assumptions of Lemma \ref{linear} and $\nu_\v\geq\nu_\h>0,$ one has
$$\int_0^\infty\|v_L\otimes v_L(t)\|_{H^{0,1}}^2\,dt\lesssim \frac{E(v_0)}{\sqrt{\nu_\h\nu_\v}}.$$}
\end{lem}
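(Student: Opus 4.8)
The plan is to estimate $\|v_L\otimes v_L\|_{H^{0,1}}^2$ by expanding the $H^{0,1}$ norm as $\|f\|_{L^2}^2+\|\p_3 f\|_{L^2}^2$ and integrating in time, using the two decay bounds from Lemma \ref{linear} together with the maximal regularity/energy estimates for the heat flow \eqref{S2eq1}. Since $H^{0,1}$ is an algebra-type space only after using anisotropic product laws, I would first reduce to $L^2$ estimates of $v_L\otimes v_L$ and $\p_3(v_L\otimes v_L)=\p_3 v_L\otimes v_L+v_L\otimes\p_3 v_L$, so that everything comes down to controlling $\int_0^\infty\|v_L\otimes v_L\|_{L^2}^2\,dt$ and $\int_0^\infty\|\p_3 v_L\otimes v_L\|_{L^2}^2\,dt$.

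For the first term, I would write $\|v_L\otimes v_L\|_{L^2}\le \|v_L\|_{L^\infty_\h L^2_\v}\|v_L\|_{L^2_\h L^\infty_\v}$ or, more symmetrically, use the Gagliardo--Nirenberg inequality $\|v_L\|_{L^4}^2\lesssim\|v_L\|_{L^2}^{\f12}\|\na v_L\|_{L^2}^{\f32}$ type bound; but to get the stated $E(v_0)/\sqrt{\nu_\h\nu_\v}$ with the particular structure $\|v_0\|_{L^2}^3\|\na_\h v_0\|_{L^2}$, the cleaner route is $\|v_L\otimes v_L\|_{L^2}\lesssim\|v_L\|_{L^\infty}\|v_L\|_{L^2}$, hence $\int_0^\infty\|v_L\otimes v_L\|_{L^2}^2\,dt\lesssim\|v_L\|_{L^\infty_t L^\infty}^2\int_0^\infty\|v_L\|_{L^2}^2\,dt$; the maximum principle (or $L^\infty$-preservation of the heat semigroup) gives $\|v_L\|_{L^\infty_tL^\infty}\le\|v_0\|_{L^\infty}$, and the energy identity gives $\int_0^\infty\|v_L\|_{L^2}^2\,dt\le\|v_0\|_{L^2}^2/(2\nu_\h)$ (bounding the dissipation from below only by the horizontal part), which yields the $\|v_0\|_{L^\infty}^2\|v_0\|_{L^2}^2/\nu_\h$ piece — slightly stronger than the claimed $/\sqrt{\nu_\h\nu_\v}$ since $\nu_\v\ge\nu_\h$. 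For the second term $\int_0^\infty\|\p_3 v_L\otimes v_L\|_{L^2}^2\,dt$, I would use an anisotropic Hölder split, for instance $\|\p_3 v_L\otimes v_L\|_{L^2}\lesssim\|\p_3 v_L\|_{L^2_\v L^2_\h}^{?}\cdots$; concretely, bound $\|\p_3 v_L\,v_L\|_{L^2}\lesssim\|\p_3 v_L\|_{L^2}^{\f12}\|\na_\h\p_3 v_L\|_{L^2}^{\f12}\|v_L\|_{L^2}^{\f12}\|\na_\h v_L\|_{L^2}^{\f12}$ via the inequalities $\|f\|_{L^\infty_\v L^4_\h}\lesssim\|f\|_{L^2}^{\f14}\|\na_\h f\|_{L^2}^{\f14}\|\p_3 f\|_{L^2}^{\f14}\|\na_\h\p_3 f\|_{L^2}^{\f14}$ proved in the style of Lemma \ref{S2lem1}, then integrate in $t$ with Hölder in time and feed in Lemma \ref{linear}: $\int_0^\infty\|\p_3 v_L\otimes v_L\|_{L^2}^2\,dt\lesssim\|\p_3 v_L\|_{L^2_t L^2}\|\na_\h\p_3 v_L\|_{L^2_tL^2}\|v_L\|_{L^\infty_tL^2}\|\na_\h v_L\|_{L^\infty_t L^2}\lesssim\frac{\|v_0\|_{L^2}\|\na_\h v_0\|_{L^2}}{\nu_\v}\|v_0\|_{L^2}\|\na_\h v_0\|_{L^2}$, which is $\lesssim\|v_0\|_{L^2}^2\|\na_\h v_0\|_{L^2}^2/\nu_\v$. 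That alone does not obviously match $\|v_0\|_{L^2}^3\|\na_\h v_0\|_{L^2}/\sqrt{\nu_\h\nu_\v}$, so I would instead interpolate differently — e.g. put one factor of $v_L$ in $L^\infty$ — to get $\int_0^\infty\|\p_3 v_L\otimes v_L\|_{L^2}^2\,dt\lesssim\|v_0\|_{L^\infty}^2\int_0^\infty\|\p_3 v_L\|_{L^2}^2\,dt\lesssim\|v_0\|_{L^\infty}^2\|v_0\|_{L^2}^2/\nu_\v$, recovering a term already present in $E(v_0)$.

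The main obstacle is matching the exact homogeneity of the right-hand side $E(v_0)/\sqrt{\nu_\h\nu_\v}$: one has to be careful which viscosity absorbs which time integral, and the $\|v_0\|_{L^2}^3\|\na_\h v_0\|_{L^2}$ term must come from balancing a horizontal-derivative gain (factor $\|\na_\h v_0\|_{L^2}^{1/2}\|v_0\|_{L^2}^{1/2}$ from an $L^4_\h$ Gagliardo--Nirenberg) against the vertical smoothing (factor $1/\sqrt{\nu_\v}$) and the energy decay (factor $\|v_0\|_{L^2}/\sqrt{\nu_\h}$), so the cross term $\|v_L\|_{L^\infty_t L^2}^{3}\|\na_\h v_L\|_{L^\infty_t L^2}\cdot(\nu_\h\nu_\v)^{-1/2}$ emerges precisely when one factor of $v_L$ is estimated in $L^4_\h$-in-space with the vertical derivative hitting another factor. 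Once the two contributions are collected and bounded termwise by $\|v_0\|_{L^2}^3\|\na_\h v_0\|_{L^2}/\sqrt{\nu_\h\nu_\v}$ and $\|v_0\|_{L^\infty}^2\|v_0\|_{L^2}^2/\sqrt{\nu_\h\nu_\v}$ respectively (using $\nu_\v\ge\nu_\h$ to trade $1/\nu_\h$ or $1/\nu_\v$ for $1/\sqrt{\nu_\h\nu_\v}$ wherever convenient), the sum is exactly $E(v_0)/\sqrt{\nu_\h\nu_\v}$ up to an absolute constant, which is the assertion.
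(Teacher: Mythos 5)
Your treatment of the differentiated piece $\int_0^\infty\|v_L\,\p_3v_L\|_{L^2}^2\,dt$ is fine and is exactly what the paper does: the maximum principle for $\|v_L\|_{L^\infty}$ together with $\int_0^\infty\|\p_3v_L\|_{L^2}^2\,dt\le\|v_0\|_{L^2}^2/(2\nu_\v)$ gives $\|v_0\|_{L^\infty}^2\|v_0\|_{L^2}^2/\nu_\v$, and since $\nu_\v\ge\sqrt{\nu_\h\nu_\v}$ this is dominated by $E(v_0)/\sqrt{\nu_\h\nu_\v}$.

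The zero-derivative piece, however, contains a genuine error. The energy identity for \eqref{S2eq1} controls $\int_0^\infty\|\na_\h v_L\|_{L^2}^2\,dt$ and $\int_0^\infty\|\p_3 v_L\|_{L^2}^2\,dt$, \emph{not} $\int_0^\infty\|v_L\|_{L^2}^2\,dt$: the latter equals, up to a constant, $\int_{\R^3}|\widehat{v_0}(\xi)|^2\bigl(\nu_\h|\xi_\h|^2+\nu_\v\xi_3^2\bigr)^{-1}\,d\xi$, which is infinite for generic $v_0\in L^2\cap L^\infty$ with $\na_\h v_0\in L^2$ (it would require $v_0$ in an $\dH^{-1}$-type space). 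There is no Poincar\'e inequality on $\R^3$ converting dissipation into decay of the $L^2$ norm, so the claimed bound $\int_0^\infty\|v_L\|_{L^2}^2\,dt\le\|v_0\|_{L^2}^2/(2\nu_\h)$ is false and that step cannot be repaired in the form you propose. (A second, minor slip: a bound by $C/\nu_\h$ is \emph{weaker}, not stronger, than one by $C/\sqrt{\nu_\h\nu_\v}$, since $\nu_\v\ge\nu_\h$ gives $1/\sqrt{\nu_\h\nu_\v}\le 1/\nu_\h$.) The correct route --- which your closing paragraph gestures at but does not execute --- is the one the paper takes: write $\|v_L\otimes v_L\|_{L^2}^2\sim\|v_L\|_{L^4}^4$, interpolate only in the vertical variable, $\|v_L(t,x_\h,\cdot)\|_{L^4_\v}\lesssim\|v_L\|_{L^2_\v}^{3/4}\|\p_3v_L\|_{L^2_\v}^{1/4}$, so that H\"older in $(t,x_\h)$ yields $\int_0^\infty\|v_L\|_{L^4}^4\,dt\lesssim\|v_L\|_{L^6_{t}(L^6_\h(L^2_\v))}^3\|\p_3v_L\|_{L^2_t(L^2)}$; the last factor gives $\nu_\v^{-1/2}\|v_0\|_{L^2}$ by Lemma \ref{linear}, and the first is bounded via $\dH^{2/3}(\R^2_\h)\hookrightarrow L^6(\R^2_\h)$ by $\|v_L\|_{L^\infty_t(L^2)}\|\na_\h v_L\|_{L^\infty_t(L^2)}\|\na_\h v_L\|_{L^2_t(L^2)}\lesssim\nu_\h^{-1/2}\|v_0\|_{L^2}^2\|\na_\h v_0\|_{L^2}$, producing exactly the term $\|v_0\|_{L^2}^3\|\na_\h v_0\|_{L^2}(\nu_\h\nu_\v)^{-1/2}$ in $E(v_0)/\sqrt{\nu_\h\nu_\v}$.
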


\begin{proof}
We begin by writing that
\beq \label{S2eq10}
\begin{split}
\int_0^\infty\|v_L\otimes v_L(t)\|_{H^{0,1}}^2\,dt=&\int_0^\infty\bigl(\|v_L\otimes v_L(t)\|_{L^2}^2
+\|\partial_3(v_L\otimes v_L)(t)\|^2_{L^2}\bigr)\,dt\\
\leq & C\int_0^\infty\bigl(\|v_L(t)\|_{L^4}^4+\|v_L\partial_3 v_L(t)\|_{L^2}^2\bigr)\,dt.
\end{split}
\eeq
Applying Lemma \ref{linear} and maximal principle for \eqref{S2eq1} gives
\beq \label{S2eq9}
\begin{split}
\int_0^\infty\|v_L\partial_3 v_L(t)\|_{L^2}^2\,dt\leq &\int_0^\infty
\|v_L(t)\|^2_{L^\infty}\|\partial_3 v_L(t)\|^2_{L^2}\,dt\\
\leq &
 \frac{\|v_0\|^2_{L^\infty}\|v_0\|_{L^2}^2}{2\nu_\v}.
 \end{split}
 \eeq

To handle the other term in \eqref{S2eq10},
by applying the Sobolev imbedding of $\dot H^{\frac
14}(\R_\v)\hookrightarrow L^4(\R_\v),$ we obtain
$$\|v_L(t,x_\h,\cdot)\|_{L^4(\R_\v)}\leq C\|v_L(t,x_\h,\cdot)\|_{L^2_\v}^{\frac 34}
\|\partial_3 v_L(t,x_\h,\cdot)\|^{\frac 14}_{L^2_\v},$$
which together with Lemma \ref{linear} implies that
\beno
\begin{split}
\int_0^\infty\|v_L(t)\|_{L^4}^4\,dt\lesssim & \int_0^\infty\int_{\R^2_\h}\|v_L(t,x_\h,\cdot)\|_{L^2_\v}^3\|\partial_3 v_L(t,x_\h,\cdot)\|_{L^2_\v}\,dx_\h\,dt\\
\lesssim &\|v_L\|_{L^6(\R^+; L^6_\h(L^2_\v))}^3\|\partial_3 v_L\|_{L^2_t(L^2)}^2\\
 \lesssim & \nu_\v^{-\f12}\|v_L\|_{L^6(\R^+; L^6_\h(L^2_\v))}^3 \|v_0\|_{L^2}.
 \end{split}
 \eeno
Whereas noticing  from
Sobolev imbedding Theorem that $\dot H^{\frac 23}(\R^2_h)\hookrightarrow
L^6(\R^2_h)$, we  write
$$\|v_L(t,\cdot,x_3)\|_{L^6_\h}\leq C\|v_L(t,\cdot,x_3)\|_{L^2_\h}^{\frac 13}\|\nabla_\h v_L(t,\cdot,x_3)\|_{L^2_\h}^{\frac 23}.$$
Taking the $L^2_\v$ norm leads to
$$\|v_L(t)\|_{L^6_\h(L^2_\v)}\leq
\|v_L(t)\|_{L^2_\v(L^6_\h)}\leq C\|v_L(t)\|_{L^2}^{\frac 13}\|\nabla_\h
v_L(t)\|_{L^2}^{\frac 23},$$
which together with Lemma \ref{linear} ensures that
\beno
\begin{split}
\|v_L\|_{L^6(\R^+; L^2_\v(L^6_\h))}^3\lesssim & \|v_L\|_{L^\infty(\R^+; L^2)}\|\nabla_\h v_L\|_{L^\infty(\R^+;L^2)}\|\nabla_\h v_L\|_{L^2(\R^+;L^2)}\\
\lesssim &\nu_\h^{-\f12}\|v_0\|_{L^2}^2\|\na_\h v_0\|_{L^2}.
\end{split}
\eeno
This gives rise to
\beno
\int_0^\infty\|v_L(t)\|_{L^4}^4\,dt\lesssim \frac{\|v_0\|_{L^2}^3\|\nabla_h v_0\|_{L^2}}{\sqrt{\nu_\h\nu_\v}}.
\eeno
Along with \eqref{S2eq10} and  \eqref{S2eq9}, we complete the proof of the lemma.
\end{proof}

\begin{proof}[Proof of the Theorem \ref{thm2}] Let $v_L$ be determined by \eqref{S2eq1}. We write
$$u=v_L+R. $$
Inserting the above substitution into \eqref{NS} yields
\begin{equation}\label{S2eq11}
\begin{cases}
\partial_t R+R\cdot\nabla R-\nu_\h\Delta_\h R-\nu_\v\partial_3^2 R+\nabla
p=-R\cdot\nabla v_L-v_L\cdot\nabla R-v_L\cdot\nabla v_L,\\
\dive R=0,\\
R(0)=0.
\end{cases}
\end{equation} It follows from classical theory on Navier-Stokes system that \eqref{S2eq11} has a unique solution
$R\in C([0,T^\ast[; H^{0,1})$ with $\na R\in  L^2(]0,T^\ast[; H^{0,1})$ for some maximal existing time $T^\ast.$ In the following,
we are going to prove that $T^\ast=\infty$ under the smallness condition \eqref{S2eq6}. For simplicity, we just present the {\it a priori} estimate.

 By taking $H^{0,1}$
scalar product of the $R$ equation of  (\ref{S2eq11}) with $R,$  we obtain
\beq \label{S2eq12}
\begin{split}
\frac{1}{2}\frac{d}{dt}\|R\|^2_{H^{0,1}}+&\nu_\h\|\nabla_\h R\|^2_{H^{0,1}}+\nu_\v\|\partial_3 R\|^2_{H^{0,1}}=- \bigl(R\cdot\nabla R | R\bigr)_{H^{0,1}}\\
&-\bigl(v_L\cdot\nabla R | R\bigr)_{H^{0,1}}-\bigl(R\cdot\nabla v_L | R\bigr)_{H^{0,1}}-
\bigl(v_L\cdot\nabla v_L | R\bigr)_{H^{0,1}}.
\end{split}
\eeq
Applying Lemma \ref{S2lem1} gives
\beno
\mid\bigl(R\cdot\nabla R | R\bigr)_{H^{0,1}}\mid \leq \|R\|_{H^{0,1}}\|\nabla_\h R\|^2_{H^{0,1}},
\eeno
and
\beno
\mid\bigl(v_L\cdot\nabla R | R\bigr)_{H^{0,1}}\mid\leq C\|v_L\|_{H^{0,1}}^{1/2}\|\nabla_\h v_L\|_{H^{0,1}}^{1/2}\|R\|_{H^{0,1}}^{1/2}\|\nabla_\h R\|^{3/2}_{H^{0,1}}.
\eeno
Then by applying Young's inequality, $xy\leq \frac 34 x^{\frac 43}+\frac 14 y^4,$ we achieve
$$
\mid\bigl(v_L\cdot\nabla R | R\bigr)_{H^{0,1}}\mid\leq {C}\nu_\h^{-3}\|v_L\|^2_{H^{0,1}}\|\nabla_\h v_L\|^2_{H^{0,1}}\|R\|^2_{H^{0,1}}
+\frac{\nu_\h}{100}\|\nabla_h R\|^2_{H^{0,1}}.$$
Similarly, notice that  $H^{\frac 12}(\R^2_\h)\subset L^4(\R^2_\h),$ we get
\beno
\begin{split}
\mid\bigl(R\cdot\nabla v_L | R\bigr)_{H^{0,1}}\mid\leq &
\|R\|^2_{L^4_\h(H^1_\v)}\|\nabla v_L\|_{H^{0,1}}\\
\leq & \|R\|_{H^{0,1}}\|\nabla_\h R\|_{H^{0,1}}\|\nabla v_L\|_{H^{0,1}}\\
\leq &{C}{\nu_\h}^{-1}\|\nabla v_L\|^2_{H^{0,1}}\|R\|^2_{H^{0,1}}+\frac{\nu_\h}{100}\|\nabla_\h R\|_{H^{0,1}}^2.
\end{split}
\eeno
For the last term in \eqref{S2eq11}, we first get, by using integrating by parts, that
\beno
\bigl(v_L\cdot\nabla v_L | R\bigr)_{H^{0,1}}=-\bigl(v_L\otimes v_L | \na R\bigr)_{H^{0,1}},
\eeno
which implies
\beno
\begin{split}
\mid\bigl(v_L\cdot\nabla v_L | R\bigr)_{H^{0,1}}\mid\leq& \|v_L\otimes v_L\|_{H^{0,1}}\|\na R\|_{H^{0,1}}\\
\leq &C\nu_\h^{-1}\|v_L\otimes v_L\|_{H^{0,1}}^2+\f{\nu_\h}{100}\|\na R\|_{H^{0,1}}.
\end{split}
\eeno

Let us denote
\beq \label{S2eq13}
T^\star\eqdefa \sup\bigl\{ \ T<T^\ast,\ \|R\|_{L^\infty_T(H^{0,1})}\leq \f{\nu_\h}4\ \bigr\}.
\eeq We are going to prove that $T^\star=T^\ast.$ Otherwise
by substituting the above estimates into \eqref{S2eq12},  for $t\leq T^\star,$ we arrive at
\beq \label{S2eq18}
\begin{split}
\frac{d}{dt}\|R\|^2_{H^{0,1}}+&\nu_\h\|\nabla_\h R\|^2_{H^{0,1}}+\nu_\v\|\partial_3 R\|^2_{H^{0,1}}\leq C\nu_\h^{-1}\|v_L\otimes v_L\|_{H^{0,1}}^2\\
&+C\Bigl({\nu_\h}^{-1}\|\nabla v_L\|^2_{H^{0,1}}+\nu_\h^{-3}\|v_L\|^2_{H^{0,1}}\|\nabla_\h v_L\|^2_{H^{0,1}}\Bigr)\|R\|^2_{H^{0,1}}.
\end{split}
\eeq
Applying  Gronwall's inequality to \eqref{S2eq18} yields
\beno
\begin{split}
\|R\|^2_{L^\infty_t(H^{0,1})}\leq & C\nu_\h^{-1}\|v_L\otimes v_L\|_{L^2_t(H^{0,1})}^2\\
&\times\exp \Bigl(\nu_\h^{-1}\|\nabla v_L\|_{L^2_t(H^{0,1})}^2+\nu_\h^{-3}\|v_L\|^2_{L^\infty_t(H^{0,1})}\|\nabla_\h v_L\|^2_{L^2_t(H^{0,1})}\Bigr),
\end{split}
\eeno
from which, Lemmas \ref{linear} and \ref{S2lem3}, for $t\leq T^\star,$ we infer
\beno
\|R\|_{L^\infty_t(H^{0,1})}^2\leq \nu_\h^{-\f32}\nu_\v^{-\f12} E(v_0)\exp\left(C\nu_\h^{-4}\|v_0\|_{H^{0,1}}^4+C\nu_\h^{-2}\|v_0\|^2_{H^{0,1}}\right).\eeno
Then under the smallness condition \eqref{S2eq6}, we have
\beq\label{S2eq14}
\|R\|_{L^\infty_t(H^{0,1})}\leq\f{\nu_\h}8\quad\mbox{for}\ \ t\leq T^\star,
\eeq
which contradicts with \eqref{S2eq13}. This in turn shows that $T^\star=T^\ast=\infty.$ Furthermore inserting the estimate
\eqref{S2eq14} into \eqref{S2eq18} shows that $\na R\in L^2(\R^+; H^{0,1}).$ This completes the proof of Theorem \ref{thm2}.
\end{proof}

The organization of this paper is as follows:

In the third section, we shall recall some basic facts on Littlewood-Paley theory;

 In the fourth section, we present the {\it priori} estimates for smooth enough solutions of \eqref{S2eq2} and \eqref{S2eq1};

 In the fifth section, we prove Theorem \refer{thPZ9};

 In the sixth section, we present the proof of Theorem \ref{thm3};

 Finally in the Appendix, we present the proofs of several technical lemmas which have been used in the proof of   Theorem \refer{thPZ9}.

 \setcounter{equation}{0}
\section{Basics on Littlewood-Paley theory}\label{Sect3}

Before we present the function spaces we are going to work with in
this context, let us briefly recall some basic facts on
Littlewood-Paley theory (see e.g. \cite{bcd}). Let $\varphi$ and
$\chi$ be smooth functions supported in $\mathcal{C}\eqdefa \{
\tau\in\R^+,\ \frac{3}{4}\leq\tau\leq\frac{8}{3}\}$ and
$\frak{B}\eqdefa \{ \tau\in\R^+,\ \tau\leq\frac{4}{3}\}$
respectively such that
\begin{equation*}
 \sum_{j\in\Z}\varphi(2^{-j}\tau)=1 \quad\hbox{for}\quad \tau>0\quad\mbox{and}\quad  \chi(\tau)+ \sum_{j\geq
0}\varphi(2^{-j}\tau)=1\quad\hbox{for}\quad \tau\geq 0.
\end{equation*}
For $a\in{\mathcal S}'(\R^3),$ we set \beq
\begin{split}
&\Delta_k^\h
a\eqdefa\cF^{-1}(\varphi(2^{-k}|\xi_\h|)\widehat{a}),\qquad
S^\h_ka\eqdefa\cF^{-1}(\chi(2^{-k}|\xi_\h|)\widehat{a}),
\\
& \Delta_\ell^\v a
\eqdefa\cF^{-1}(\varphi(2^{-\ell}|\xi_3|)\widehat{a}),\qquad \
S^\v_\ell a \eqdefa \cF^{-1}(\chi(2^{-\ell}|\xi_3|)\widehat{a}),
 \quad\mbox{and}\\
&\Delta_ja\eqdefa\cF^{-1}(\varphi(2^{-j}|\xi|)\widehat{a}),
 \qquad\ \ \
S_ja\eqdefa \cF^{-1}(\chi(2^{-j}|\xi|)\widehat{a}), \end{split}
\label{1.0}\eeq where  $\xi_\h=(\xi_1,\xi_2),$ $\xi=(\xi_\h,\xi_3),$
$\cF a$ and $\widehat{a}$ denote the Fourier transform of the
distribution $a.$ The dyadic operators satisfy the property of
almost orthogonality:
\begin{equation}\label{C4}
\Delta_k\Delta_j a\equiv 0 \quad\mbox{if}\quad| k-j|\geq 2
\quad\mbox{and}\quad \Delta_k( S_{j-1}a \Delta_j b) \equiv
0\quad\mbox{if}\quad| k-j|\geq 5.
\end{equation}
Similar properties hold for $\D_k^\h$ and $\D_\ell^\v.$

Let us recall the  anisotropic
Bernstein type lemma from \cite{CZ5, Paicu05}\,.

\begin{lem}\label{lemBern}
{\sl Let $\cB_{\h}$ (resp.~$\cB_{\rm v}$) a ball
of ~$\R^2_{\h}$ (resp.~$\R_{\rm v}$), and~$\cC_{\h}$ (resp.~$\cC_{\rm v}$) a
ring of~$\R^2_{\h}$ (resp.~$\R_{\rm
v}$); let~$1\leq p_2\leq p_1\leq
\infty$ and ~$1\leq q_2\leq q_1\leq \infty.$ Then there hold
\beno
\begin{split}
\mbox{if}\ \ \Supp \wh a\subset 2^k\cB_{\h}&\Rightarrow
\|\partial_{x_{\rm h}}^\alpha a\|_{L^{p_1}_{\rm h}(L^{q_1}_{\rm v})}
\lesssim 2^{k\left(|\alpha|+2\left(1/{p_2}-1/{p_1}\right)\right)}
\|a\|_{L^{p_2}_{\rm h}(L^{q_1}_{\rm v})};\\
\mbox{if}\ \ \Supp\wh a\subset 2^\ell\cB_{\rm v}&\Rightarrow
\|\partial_{x_3}^\beta a\|_{L^{p_1}_{\rm h}(L^{q_1}_{\rm v})}
\lesssim 2^{\ell\left(\beta+(1/{q_2}-1/{q_1})\right)} \|
a\|_{L^{p_1}_{\rm h}(L^{q_2}_{\rm v})};\\
\mbox{if}\ \ \Supp\wh a\subset 2^k\cC_{\h}&\Rightarrow
\|a\|_{L^{p_1}_{\rm h}(L^{q_1}_{\rm v})} \lesssim
2^{-kN}\sup_{|\alpha|=N}
\|\partial_{x_{\rm h}}^\alpha a\|_{L^{p_1}_{\rm
h}(L^{q_1}_{\rm v})};\\
\mbox{if}\ \ \Supp\wh a\subset2^\ell\cC_{\rm v}&\Rightarrow
\|a\|_{L^{p_1}_{\rm h}(L^{q_1}_{\rm v})} \lesssim 2^{-\ell N}
\|\partial_{x_3}^N a\|_{L^{p_1}_{\rm h}(L^{q_1}_{\rm v})}.\end{split}\eeno
}
\end{lem}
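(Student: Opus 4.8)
\noindent\emph{Proof strategy.}
The plan is to reduce each of the four assertions to the classical Bernstein inequality performed in a single group of variables --- the horizontal variables $\R^2_\h$ for the first and third, the vertical variable $\R_\v$ for the second and fourth (see \cite{bcd}) --- using that each spectral cut-off involves only $\xi_\h$ or only $\xi_3$. The sole point at which the anisotropic mixed norm $L^{p_1}_\h(L^{q_1}_\v)$ enters is the observation that a convolution in one group of variables commutes with taking the Lebesgue norm in the complementary variable, by Minkowski's integral inequality.

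Accordingly, I would first record a mixed-norm Young inequality: for a scalar kernel $f=f(x_\h)$ and exponents $1\le p_2\le p_1\le\infty$ with $\frac1r=1-\frac1{p_2}+\frac1{p_1}$,
\[
\|f\ast_\h g\|_{L^{p_1}_\h(L^{q_1}_\v)}\le\|f\|_{L^r_\h}\,\|g\|_{L^{p_2}_\h(L^{q_1}_\v)},
\]
where $\ast_\h$ is convolution in $x_\h$ alone; this follows by bounding $\|(f\ast_\h g)(x_\h,\cdot)\|_{L^{q_1}_\v}$ by $\bigl(|f|\ast_\h G\bigr)(x_\h)$ with $G(x_\h)\eqdefa\|g(x_\h,\cdot)\|_{L^{q_1}_\v}$ and then applying ordinary Young in $x_\h$. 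The vertical analogue (convolution $\ast_\v$ in $x_3$, exponents $1\le q_2\le q_1\le\infty$) is even simpler: one applies one-dimensional Young in $x_3$ pointwise in $x_\h$ and then takes the $L^{p_1}_\h$ norm. Granting this, for the first assertion I pick $\phi\in C^\infty_c(\R^2_\h)$ with $\phi\equiv1$ near $\cB_\h$, set $h_k(x_\h)\eqdefa 2^{2k}(\cF_\h^{-1}\phi)(2^kx_\h)$ so that $\wh{h_k}(\xi_\h)\equiv1$ on $2^k\cB_\h$, and note that $\pa_{x_\h}^\alpha a=(\pa^\alpha h_k)\ast_\h a$ whenever $\Supp\wh a\subset 2^k\cB_\h$; the claimed bound then drops out of the mixed-norm Young inequality together with $\|\pa^\alpha h_k\|_{L^r_\h}=2^{k(|\alpha|+2-2/r)}\|\pa^\alpha\cF_\h^{-1}\phi\|_{L^r_\h}$ and $2-2/r=2\bigl(\frac1{p_2}-\frac1{p_1}\bigr)$. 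The second assertion is obtained identically with $\R^2_\h$ replaced by $\R_\v$, using $\|\pa^\beta\tilde h_\ell\|_{L^s_\v}=2^{\ell(\beta+1-1/s)}\|\pa^\beta\cF_\v^{-1}\tilde\phi\|_{L^s_\v}$ and $1-1/s=\frac1{q_2}-\frac1{q_1}$.

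For the third assertion I would fix $\tilde\psi\in C^\infty_c(\R^2_\h)$ with $\tilde\psi\equiv1$ near the ring $\cC_\h$ and $\tilde\psi$ vanishing near the origin; using $|\xi_\h|^{2N}=\sum_{|\alpha|=N}\binom{N}{\alpha}\xi_\h^{2\alpha}$ and inserting the harmless factor $|2^{-k}\xi_\h|^{2N}/|2^{-k}\xi_\h|^{2N}$ into $\wh a=\tilde\psi(2^{-k}\xi_\h)\wh a$, one is led to $a=2^{-kN}\sum_{|\alpha|=N}c_\alpha\,g_{k,\alpha}\ast_\h\pa_{x_\h}^\alpha a$, with $c_\alpha$ a combinatorial constant, $g_{k,\alpha}(x_\h)=2^{2k}(\cF_\h^{-1}m_\alpha)(2^kx_\h)$ and $m_\alpha(\eta)=\binom{N}{\alpha}\tilde\psi(\eta)\,\eta^\alpha/|\eta|^{2N}$, which lies in $C^\infty_c(\R^2_\h)$ --- there is no singularity at the origin thanks to $\tilde\psi$ --- so that $\|g_{k,\alpha}\|_{L^1_\h}$ is a finite constant independent of $k$; the mixed-norm Young inequality with $p_1=p_2$ then yields the third assertion. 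The fourth is obtained the same way in the vertical variable, factoring $\xi_3^{2N}=\xi_3^{N}\cdot\xi_3^{N}$ and using the $C^\infty_c$ symbol $\tilde\psi_\v(\eta)/\eta^{N}$.

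The one genuinely non-routine ingredient is the mixed-norm Young inequality of the second paragraph, that is, checking that a convolution in one group of variables passes through the norm in the complementary variable; but this is a one-line consequence of Minkowski's integral inequality. Everything else is the textbook derivation of Bernstein's lemma carried out separately in the horizontal and vertical frequency variables, so I anticipate no real obstacle.
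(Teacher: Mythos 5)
Your proof is correct and complete. The paper does not actually prove this lemma---it only recalls it from \cite{CZ5, Paicu05}---and your argument (spectral localization realized as a convolution in a single group of variables, a mixed-norm Young inequality obtained from Minkowski's integral inequality, and for the ring case the division by $|\xi_\h|^{2N}$, resp.\ $\xi_3^{N}$, against a cut-off vanishing near the origin) is precisely the standard derivation used in those references and in \cite{bcd} for the isotropic case, so there is nothing to object to.
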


Due to the anisotropic spectral properties of the linear
equation \eqref{S2eq1}, we need  the following anisotropic type Besov norm:

\begin{defi}\label{def2}
{\sl  Let  $s_1,s_2\in\R$ and
$a\in{\mathcal S}_h'(\R^3),$ we define the norm
$$
\|a\|_{\cB^{s_1,s_2}}\eqdefa \Bigl(2^{\ell s_2} \bigl(2^{k
s_1}\|\D_{k}^\h\D^\v_{\ell}a\|_{L^2}\bigr)_{\ell^{2}}\Bigr)_{\ell^{1}}.
$$
In particular, when $s_1=0$ we denote $\cB^{0,s_2}$ by $B^{0,s_2}$ with $$
\|a\|_{B^{0,s_2}}\eqdefa \sum_{\ell\in\Z}2^{\ell s_2}\|\D_\ell^\v a\|_{L^2}. $$ }
\end{defi}

We recall  the classical homogeneous anisotropic Sobolev norm as follows
\beno
\|a\|_{\dH^{s_1,s_2}}\eqdefa \Bigl(\sum_{(k,\ell)\in\in\Z^2} 2^{2k
s_1} 2^{2\ell s_2}\|\D_{k}^\h\D^\v_{\ell}a\|_{L^2}^2\Bigr)^{\f12}.
\eeno

In  order to obtain a better description of the regularizing effect
for the transport-diffusion equation, we will use Chemin-Lerner type
spaces.

\begin{defi}\label{def3}
{\sl Let  $p\in[1,+\infty]$ and $T\in ]0,+\infty]$.
 We define the norms of  $\wt{L}^p_T(\cB^{s_1,s_2}(\R^3))$ and $\wt{L}^p_T(B^{0,s_2})$  by
\beno \|a\|_{\wt{L}^p_T(\cB^{s_1,s_2})}\eqdefa \Bigl(2^{\ell s_2} \bigl(2^{k
s_1}\|\D_{k}^\h\D^\v_{\ell}a\|_{L^p_T(L^2)}\bigr)_{\ell^{2}}\Bigr)_{\ell^{1}},
\eeno  and
$$
\|a\|_{\wt{L}^p_T(B^{0,s_2})}\eqdefa \sum_{\ell\in\Z}2^{\ell s_2}\|\D_\ell^\v a\|_{L^p_T(L^2)} $$
 respectively. }
\end{defi}

In particular, when $p=2,$ we have
\beq\label{Def3eq1}
\begin{split}
\|a\|_{\wt{L}^2_T(\cB^{0,s_2})}=&\sum_{\ell\in\Z}2^{\ell s_2}\bigl(\sum_{k\in\Z}\|\D_k^\h\D_\ell^\v a\|_{L^2_T(L^2)}^2\bigr)^{\f12}\\
=&\sum_{\ell\in\Z}2^{\ell s_2}\|\D_\ell^\v a\|_{L^2_T(L^2)}=\|a\|_{\wt{L}^2_T(B^{0,s_2})}.
\end{split}
\eeq

In order to study  fluid evolving between two parallel plans, namely to prove Theorem \ref{thm3}, we also need the following norms:
\begin{defi}\label{def4}
{\sl Let $\Omega=\R^2\times ]0,1[,$  $s\in\R$ and $p\in [1,\infty],$ for $a\in C^\infty(\Om),$ we define
\beq\label{def4eq}
\|a\|_{\cB_\h^s(\Om)}\eqdefa \sum_{k\in\Z}2^{ks}\|\D_k^\h a\|_{L^2(\Om)} \andf \|a\|_{\wt{L}^p_T(\cB_\h^s(\Om))}\eqdefa \sum_{k\in\Z}2^{ks}\|\D_k^\h a\|_{L^p_T(L^2(\Om))}.
\eeq}
\end{defi}

To overcome the difficulty that one can not use Gronwall's type  argument for the Chemin-Lerner type norms,
we  need the time-weighted Chemin-Lerner norm introduced by the authors in \cite{PZ1}:

\begin{defi}\label{defpz}
Let $f(t)\in L^1_{loc}(\R_+)$, $f(t)\geq 0$. We define
$$\|u\|_{\widetilde{L^2_{T,f}}(B^{0,s_2})}=\sum_{\ell\in\Z} 2^{\ell s_2}
\Bigl(\int_0^Tf(t)\|\Delta_\ell^\v u(t)\|_{L^2}^2dt\Bigr)^{\frac 12}.$$
\end{defi}

Finally  we  recall the isentropic para-differential
decomposition from \cite{Bo}: let $a$ and~Ê$b$ be in~$ \cS'(\R^3)$,
\beq
\label{pd}\begin{split} &
ab=T_ab+{T}_b a+ R(a,b)\quad\mbox{or} \quad ab=T_ab+ \bar{R}(a,b)  \quad\hbox{where}\\
& T_ab\eqdefa \sum_{j\in\Z}S_{j-1}a\Delta_jb,\quad \bar{R}(a,b)\eqdefa\sum_{j\in\Z}\Delta_ja S_{j+2}b
 \andf\\
&R(a,b)\eqdefa \sum_{j\in\Z}\Delta_ja\tilde{\Delta}_{j}b,\quad\hbox{with}\quad
\tilde{\Delta}_{j}b\eqdefa \sum_{\ell=j-1}^{j+1}\D_\ell a. \end{split} \eeq
In what follows, we shall use the  anisotropic version of
Bony's decomposition for both horizontal and vertical variables.

As an application of the above basic facts on Littlewood-Paley
theory, we present the following product law in the anisotropic
Besov spaces.

\begin{lem}\label{S3lem0}
{\sl Let  $\tau_1,\tau_2\in ]-1/2, 1/2]$ with  $\tau_1+\tau_2>0.$
Let $a\in B^{0,\tau_1}$ with $\na_\h a\in B^{0,\tau_1},$ let $b\in B^{0,\tau_2}$ with $\na_\h b\in B^{0,\tau_2}.$ Then one has
\beq \label{S3eq19}
\|ab\|_{B^{0,\tau_1+\tau_2-\f12}}\lesssim \|a\|_{B^{0,\tau_1}}^{\f12}\|\na_\h a\|_{B^{0,\tau_1}}^{\f12}\|b\|_{B^{0,\tau_2}}^{\f12}\|\na_\h b\|_{B^{0,\tau_2}}^{\f12}.
\eeq}
\end{lem}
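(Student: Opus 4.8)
The plan is to apply Bony's decomposition \eqref{pd} in the \emph{vertical} variable, $ab=T^\v_a b+T^\v_b a+R^\v(a,b)$ with $T^\v_a b=\sum_{\ell'}S_{\ell'-1}^\v a\,\D_{\ell'}^\v b$ and $R^\v(a,b)=\sum_{\ell'}\D_{\ell'}^\v a\,\wt\D_{\ell'}^\v b$, and to estimate each piece in $B^{0,\tau_1+\tau_2-\f12}$. Since the target estimate \eqref{S3eq19} is symmetric in $a$ and $b$, it suffices to treat $T^\v_a b$ and $R^\v(a,b)$. The only analytic inputs are the two–dimensional Gagliardo–Nirenberg inequality $\|f\|_{L^4(\R^2_\h)}\lesssim\|f\|_{L^2_\h}^{\f12}\|\na_\h f\|_{L^2_\h}^{\f12}$ (applied for fixed $x_3$ and combined with Cauchy–Schwarz in $x_3$) and the vertical Bernstein inequalities of Lemma~\ref{lemBern}; concretely, for a vertically localised block one gets $\|\D_m^\v g\|_{L^\infty_\v(L^4_\h)}\lesssim 2^{m/2}\|\D_m^\v g\|_{L^2}^{\f12}\|\na_\h\D_m^\v g\|_{L^2}^{\f12}$ and $\|\D_m^\v g\|_{L^2_\v(L^4_\h)}\lesssim\|\D_m^\v g\|_{L^2}^{\f12}\|\na_\h\D_m^\v g\|_{L^2}^{\f12}$.

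For the paraproduct, only $|\ell-\ell'|\leq 4$ contributes to $\D_\ell^\v(T^\v_a b)$, and Hölder in $x_\h$ gives $\|S_{\ell'-1}^\v a\,\D_{\ell'}^\v b\|_{L^2}\leq\|S_{\ell'-1}^\v a\|_{L^\infty_\v(L^4_\h)}\|\D_{\ell'}^\v b\|_{L^2_\v(L^4_\h)}$. The key point is that one must \emph{not} bound $S_{\ell'-1}^\v a$ in $L^2$ directly — that fails when $\tau_1>0$ since $B^{0,\tau_1}$ controls only an $\ell^1$ (not geometrically decaying) sequence of dyadic blocks — but instead sum the Bernstein estimate over the blocks $\D_m^\v a$, $m\leq\ell'-2$, so that the gain $2^{m/2}$ sits on the index $m$ and merges with the weight $2^{-m\tau_1}$ into $2^{m(\f12-\tau_1)}$, a kernel which is summable towards $m\to-\infty$ precisely because $\tau_1\leq\f12$. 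Carrying the $b$–factor with its weight $2^{-\ell'\tau_2}$ and summing $\sum_\ell 2^{\ell(\tau_1+\tau_2-\f12)}\|\D_\ell^\v(T^\v_a b)\|_{L^2}$, everything reduces, up to the factor $\|a\|_{B^{0,\tau_1}}^{1/2}\|\na_\h a\|_{B^{0,\tau_1}}^{1/2}\|b\|_{B^{0,\tau_2}}^{1/2}\|\na_\h b\|_{B^{0,\tau_2}}^{1/2}$, to the finiteness of $\sum_{\ell'}2^{-\ell'(\f12-\tau_1)}q_{\ell'}\sum_{m\leq\ell'-2}2^{m(\f12-\tau_1)}p_m$ with $p,q\in\ell^1$: when $\tau_1<\f12$ this is a discrete convolution of $p$ with the summable kernel $2^{-\cdot(\f12-\tau_1)}\car_{\cdot\geq2}$ tested against $q\in\ell^\infty$, and when $\tau_1=\f12$ the inner sum is already $\leq\|p\|_{\ell^1}$. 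Interchanging $a$ and $b$ and using $\tau_2\leq\f12$ handles $T^\v_b a$.

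For the remainder, $\D_k^\v R^\v(a,b)=\sum_{\ell'\geq k-3}\D_k^\v(\D_{\ell'}^\v a\,\wt\D_{\ell'}^\v b)$; Bernstein from $L^1_\v$ to $L^2_\v$ gives $\|\D_k^\v(\D_{\ell'}^\v a\,\wt\D_{\ell'}^\v b)\|_{L^2}\lesssim 2^{k/2}\|\D_{\ell'}^\v a\,\wt\D_{\ell'}^\v b\|_{L^2_\h(L^1_\v)}$, and Hölder in $x_\h$ with Gagliardo–Nirenberg bounds the last quantity by $\|\D_{\ell'}^\v a\|_{L^2}^{\f12}\|\na_\h\D_{\ell'}^\v a\|_{L^2}^{\f12}\|\wt\D_{\ell'}^\v b\|_{L^2}^{\f12}\|\na_\h\wt\D_{\ell'}^\v b\|_{L^2}^{\f12}$. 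Multiplying by $2^{k(\tau_1+\tau_2-\f12)}$, summing over $k$ and exchanging summations produces $\sum_{k\leq\ell'+3}2^{k(\tau_1+\tau_2)}\lesssim 2^{\ell'(\tau_1+\tau_2)}$ — the sole place the hypothesis $\tau_1+\tau_2>0$ enters — after which distributing the weights $2^{\ell'\tau_1}$, $2^{\ell'\tau_2}$ and summing in $\ell'$ leaves a convergent sum of products of $\ell^1$ sequences times the right–hand side of \eqref{S3eq19}.

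The one genuinely delicate step is the low–frequency bookkeeping in the paraproduct: the absence of geometric decay of the blocks of $a$ in $B^{0,\tau_1}$ invalidates the naive $L^2$ bound on $S_{\ell'-1}^\v a$ for $\tau_1>0$, forcing the weight $2^{m(\f12-\tau_1)}$ to be carried through a double sum and recognised as a convolution with a summable kernel, which is exactly what pins down the restriction $\tau_i\le\f12$. Everything else is routine anisotropic Littlewood–Paley estimation.
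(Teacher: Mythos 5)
Your proof is correct and follows essentially the same route as the paper's: vertical Bony decomposition, the two-dimensional Gagliardo--Nirenberg inequality combined with vertical Bernstein estimates on each dyadic block of $S_{\ell'-1}^\v a$ (using $\tau_i\le\f12$ to sum the low frequencies), and the condition $\tau_1+\tau_2>0$ only in the remainder term. The only difference is cosmetic --- you sum over the output frequency first in the remainder, whereas the paper keeps a block-by-block bound with a $d_\ell$ sequence --- so there is nothing further to add.
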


\begin{proof} We first get, by applying Bony's decomposition to $ab$ in  vertical variable, that
\beq \label{S3eq20}
ab=T^\v_a b+{T}^\v_ba+R^\v(a,b).
\eeq
Due to $\tau_1\leq \f12,$ applying Lemma \ref{lemBern} gives
\beno\begin{split}
\|S_\ell^\v a\|_{L^\infty_\v(L^4_\h)}\lesssim & \sum_{\ell'\leq\ell-1}2^{\f{\ell'}2}\|\D_{\ell'}^\v a\|_{L^2_\v(L^4_\h)}\\
\lesssim & \sum_{\ell'\leq\ell-1}2^{\f{\ell'}2}\|\D_{\ell'}^\v a\|_{L^2}^{\f12}\|\D_{\ell'}^\v \na_\h a\|_{L^2}^{\f12}\\
\lesssim& 2^{\ell\left(\f12-\tau_1\right)}\|a\|_{B^{0,\tau_1}}^{\f12}\|\na_\h a\|_{B^{0,\tau_1}}^{\f12}.
\end{split}
\eeno
from which, and the support properties to the Fourier transform of the terms in $T^\v_ab,$ we infer
\beno
\begin{split}
\|\D_\ell^\v(T^\v_ab)\|_{L^2}\lesssim &\sum_{ |\ell'-\ell|\leq 4}\|S_{\ell'-1}^\v a\|_{L^\infty_\v(L^4_\h)}
\|{\D}_{\ell'}^\v b\|_{L^2_\v(L^4_\h)}\\
\lesssim & \sum_{ |\ell'-\ell|\leq 4}\|S_{\ell'-1}^\v a\|_{L^\infty_\v(L^4_\h)}
\|{\D}_{\ell'}^\v b\|_{L^2}^{\f12}\|{\D}_{\ell'}^\v \na_\h b\|_{L^2}^{\f12}\\
\lesssim &d_{\ell} 2^{-\ell(\tau_1+\tau_2-\f12)}\|a\|_{B^{0,\tau_1}}^{\f12}\|\na_\h a\|_{B^{0,\tau_1}}^{\f12}\|b\|_{B^{0,\tau_2}}^{\f12}\|\na_\h b\|_{B^{0,\tau_2}}^{\f12}.
\end{split}
\eeno Here and in all that follows, we always denote $\left(d_\ell\right)_{\ell\in\Z}$ to be a generic element of $\ell^1(\Z)$ so that $\sum_{\ell\in\Z}d_\ell=1.$
The same estimate holds for $T^\v_ba.$

On the other hand, we deduce from  Lemma \ref{lemBern} that
\beno
\begin{split}
\|\D_\ell^\v (R^\v(a,b))\|_{L^2}\lesssim & 2^{\f{\ell}2}\sum_{ \ell'\geq \ell-3}\|\D_{\ell'}^\v a\|_{L^2_\v(L^4_\h)}
\|\wt{\D}_{\ell'}^\v b\|_{L^2_\v(L^4_\h)}\\
\lesssim & 2^{\f{\ell}2}\sum_{ \ell'\geq \ell-3}\|\D_{\ell'}^\v a\|_{L^2}^{\f12}\|\D_{\ell'}^\v \na_\h a\|_{L^2}^{\f12}
\|\wt{\D}_{\ell'}^\v b\|_{L^2}^{\f12}\|\wt{\D}_{\ell'}^\v \na_\h b\|_{L^2}^{\f12}\\
\lesssim & 2^{\f{\ell}2}\sum_{ \ell'\geq \ell-3}d_{\ell'}2^{-\ell'(\tau_1+\tau_2)}\|a\|_{B^{0,\tau_1}}^{\f12}\|\na_\h a\|_{B^{0,\tau_1}}^{\f12}\|b\|_{B^{0,\tau_2}}^{\f12}\|\na_\h b\|_{B^{0,\tau_2}}^{\f12}\\
\lesssim &d_{\ell} 2^{-\ell(\tau_1+\tau_2-\f12)}\|a\|_{B^{0,\tau_1}}^{\f12}\|\na_\h a\|_{B^{0,\tau_1}}^{\f12}\|b\|_{B^{0,\tau_2}}^{\f12}\|\na_\h b\|_{B^{0,\tau_2}}^{\f12},
\end{split}
\eeno where in the last step, we used the fact that $\tau_1+\tau_2>0,$ so that $$\sum_{\ell'\geq \ell-3}d_{\ell'}2^{-\ell'(\tau_1+\tau_2)}\lesssim d_{\ell}2^{-\ell(\tau_1+\tau_2)}.$$
 This completes the proof of the lemma.
\end{proof}

\begin{rmk}\label{S3rmk1}
We remark that the law of product \eqref{S3eq19} works also for Chemin-Lerner norms.
\end{rmk}

 \setcounter{equation}{0}
\section{The {\it a priori} estimates of $v_L$ and $\buh$ }\label{Sect4}

The goal of this section is to present the {\it a priori} estimates of $v_L$ and $\buh.$

\begin{prop}\label{S3prop1}
{\sl Let $v_0\in \cB^{\f12,0}\cap B^{0,\f12}$ and $v_L$ be the corresponding solution of \eqref{S2eq1}. Then we have
\beq \label{S3eq1} \begin{split}
&\|\na_\h v_L\|_{\wt{L}^2_t(B^{0,\f12})}\lesssim \left(\nu_\h\nu_\v\right)^{-\f14}\|v_0\|_{\cB^{\f12,0}},\\
\|v_L\|_{\wt{L}^\infty_t(B^{0,\f12})}+&\nu_\h^{\f12}\|\na_\h v_L\|_{\wt{L}^2_t(B^{0,\f12})}+\nu_\v^{\f12}\|\p_3 v_L\|_{\wt{L}^2_t(B^{0,\f12})}\lesssim \|v_0\|_{B^{0,\f12}}.
\end{split}
\eeq
}
\end{prop}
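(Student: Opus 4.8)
The plan is to derive both estimates in \eqref{S3eq1} by a dyadic energy method applied to the linear system \eqref{S2eq1}, localized in both horizontal and vertical frequencies. First I would apply $\D_k^\h\D_\ell^\v$ to \eqref{S2eq1}; since the equation is linear with constant coefficients, the localized pieces $\D_k^\h\D_\ell^\v v_L$ satisfy the same heat-type equation. Taking the $L^2(\R^3)$ inner product with $\D_k^\h\D_\ell^\v v_L$ and using the spectral localization together with Lemma \ref{lemBern} to convert $\|\na_\h \D_k^\h\D_\ell^\v v_L\|_{L^2}$ into $2^k\|\D_k^\h\D_\ell^\v v_L\|_{L^2}$ (and similarly $\|\p_3\D_k^\h\D_\ell^\v v_L\|_{L^2}\sim 2^\ell\|\D_k^\h\D_\ell^\v v_L\|_{L^2}$), I get
\beno
\f12\f{d}{dt}\|\D_k^\h\D_\ell^\v v_L\|_{L^2}^2+c\,\nu_\h 2^{2k}\|\D_k^\h\D_\ell^\v v_L\|_{L^2}^2+c\,\nu_\v 2^{2\ell}\|\D_k^\h\D_\ell^\v v_L\|_{L^2}^2\leq 0,
\eeno
which after integration in time yields
\beno
\|\D_k^\h\D_\ell^\v v_L(t)\|_{L^2}^2+c\,\nu_\h 2^{2k}\int_0^t\|\D_k^\h\D_\ell^\v v_L\|_{L^2}^2\,dt'+c\,\nu_\v 2^{2\ell}\int_0^t\|\D_k^\h\D_\ell^\v v_L\|_{L^2}^2\,dt'\leq \|\D_k^\h\D_\ell^\v v_0\|_{L^2}^2.
\eeno

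For the second line of \eqref{S3eq1}, I would take the square root, multiply by $2^{\ell/2}$, and sum: the $\ell^1$ sum over $\ell$ of the $\ell^2$-in-$k$ norm is exactly the $B^{0,1/2}$ (equivalently $\cB^{0,1/2}$, cf.\ \eqref{Def3eq1}) Chemin--Lerner norm, so the three left-hand pieces are controlled by $\|v_0\|_{B^{0,1/2}}$ directly; the factor $\nu_\h^{1/2}$ in front of $\|\na_\h v_L\|_{\wt L^2_t(B^{0,1/2})}$ and $\nu_\v^{1/2}$ in front of $\|\p_3 v_L\|_{\wt L^2_t(B^{0,1/2})}$ come from the $2^{2k}$ and $2^{2\ell}$ weights produced by Bernstein. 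For the first line, I would instead interpolate: from the bound above, $\nu_\h^{1/2}\nu_\v^{1/2}2^{k}2^{\ell}\int_0^t\|\D_k^\h\D_\ell^\v v_L\|_{L^2}^2\,dt'\lesssim \|\D_k^\h\D_\ell^\v v_0\|_{L^2}^2$ (using $ab\le \frac12(a^2+b^2)$ on the two dissipative terms, keeping the geometric mean), so $\nu_\h^{1/4}\nu_\v^{1/4}2^{k/2}2^{\ell/2}\|\D_k^\h\D_\ell^\v v_L\|_{L^2_t(L^2)}\lesssim \|\D_k^\h\D_\ell^\v v_0\|_{L^2}$. Multiplying by $2^{k/2}2^{\ell/2}$ (to account for the $\na_\h$ which gives $2^k$, and to build the $B^{0,1/2}$ vertical weight $2^{\ell/2}$) then summing $\ell^2$ in $k$ and $\ell^1$ in $\ell$, one sees the left side is $\nu_\h^{1/4}\nu_\v^{1/4}\|\na_\h v_L\|_{\wt L^2_t(B^{0,1/2})}$ up to the mismatch in horizontal weights; here the $2^{k/2}$ on the initial data side assembles the $\cB^{1/2,0}$ norm of $v_0$, which is why the right-hand side is $(\nu_\h\nu_\v)^{-1/4}\|v_0\|_{\cB^{1/2,0}}$ rather than involving $\|v_0\|_{B^{0,1/2}}$.

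The only genuinely delicate point is bookkeeping the \emph{two different} frequency gradations: the first estimate measures $\na_\h v_L$ with a \emph{horizontal} $B^{\cdot,\cdot}$-type weight coming from $v_0\in\cB^{1/2,0}$, whereas the second measures everything with a \emph{vertical} weight from $v_0\in B^{0,1/2}$, so one must be careful to extract $2^k$ versus $2^\ell$ factors from Bernstein in the right places and to distribute $\nu_\h$ versus $\nu_\v$ accordingly — in particular using $\nu_\v\ge\nu_\h$ is not needed here, but keeping the geometric-mean trick $\nu_\h 2^{2k}+\nu_\v 2^{2\ell}\ge 2(\nu_\h\nu_\v)^{1/2}2^{k+\ell}$ is what produces the $(\nu_\h\nu_\v)^{-1/4}$ constant. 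Once the dyadic inequality is in hand, everything else is a routine application of Minkowski's inequality in the $\ell^1(\ell^2)$ ordering that defines the $\wt L^2_t$ norms (Definition \ref{def3}), together with the identity \eqref{Def3eq1}. I do not expect any obstacle beyond this indexing care; no paraproduct or nonlinear estimate is involved since \eqref{S2eq1} is linear.
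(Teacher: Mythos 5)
Your proof is correct and follows essentially the same route as the paper: a dyadic energy estimate on $\D_k^\h\D_\ell^\v v_L$ for the linear equation, with the second line of \eqref{S3eq1} obtained by summing with the $2^{\ell/2}$ weight (using \eqref{Def3eq1}) and the first line obtained by trading horizontal against vertical dissipation. The only (cosmetic) difference is that you apply the geometric-mean inequality $\nu_\h 2^{2k}+\nu_\v 2^{2\ell}\geq 2(\nu_\h\nu_\v)^{\f12}2^{k+\ell}$ block by block, whereas the paper first establishes the full $\cB^{\f12,0}$ estimate \eqref{S3eq2} and then performs the same interpolation via Cauchy--Schwarz in $k$ and in $\ell$ on the summed norms; both yield the same $(\nu_\h\nu_\v)^{-\f14}\|v_0\|_{\cB^{\f12,0}}$ bound.
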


\begin{proof} We get, by first applying the operator $\D_k^\h\D_\ell^\v$ to the system \eqref{S2eq1} and then taking
$L^2$ inner product of the resulting equation with $\D_k^\h\D_\ell^\v v_L,$ that
\beno
\f12\f{d}{dt}\|\D_k^\h\D_\ell^\v v_L(t)\|_{L^2}^2+\nu_\h\|\D_k^\h\D_\ell^\v\na_\h v_L\|_{L^2}^2+\nu_\v\|\D_k^\h\D_\ell^\v \p_3 v_L\|_{L^2}^2=0.
\eeno
Integrating the above equality over $[0,t]$ and  then taking square root of the resulting equality, we write
\beno
\|\D_k^\h\D_\ell^\v v_L\|_{L^\infty_t(L^2)}+\sqrt{\nu_\h}\|\D_k^\h\D_\ell^\v\na_\h v_L\|_{L^2_t(L^2)}+\sqrt{\nu_\v}\|\D_k^\h\D_\ell^\v \p_3 v_L\|_{L^2_t(L^2)}
\leq \|\D_k^\h\D_\ell^\v v_0\|_{L^2}.
\eeno
By multiplying $2^{\f{k}2}$ to the above inequality and taking $\ell^2$ norm with respect to $k\in\Z$ and then taking $\ell^1$ norm with respect to $\ell\in\Z,$
we achieve
\beq\label{S3eq2}
\|v_L\|_{\wt{L}^\infty_t(\cB^{\f12,0})}+\sqrt{\nu_\h}\|\na_\h v_L\|_{\wt{L}^2_t(\cB^{\f12,0})}+\sqrt{\nu_\v}\|\p_3 v_L\|_{\wt{L}^2_t(\cB^{\f12,0})}
\leq \|v_0\|_{\cB^{\f12,0}}.
\eeq
Whereas it follows from Fourier-Plancherel equality and Lemma \ref{lemBern} that
\beno
\begin{split}
2^\ell\|\D_\ell^\v \na_\h v_L\|_{L^2_t(L^2)}^2=&2^\ell \sum_{k\in\Z}\|\D_k^\h\D_\ell^\v \na_\h v_L\|_{L^2_t(L^2)}^2\\
\lesssim &2^\ell \sum_{k\in\Z}2^{2k}\|\D_k^\h\D_\ell^\v  v_L\|_{L^2_t(L^2)}^2\\
\lesssim &\Bigl(\sum_{k\in\Z}2^{3k}\|\D_k^\h\D_\ell^\v  v_L\|_{L^2_t(L^2)}^2\Bigr)^{\f12}
\Bigl(\sum_{k\in\Z}2^{k}2^{2\ell}\|\D_k^\h\D_\ell^\v  v_L\|_{L^2_t(L^2)}^2\Bigr)^{\f12},
\end{split}
\eeno
from which and \eqref{S3eq2}, we infer
\beno
\begin{split}
\|\na_\h v_L\|_{\wt{L}^2_t(B^{0,\f12})}\lesssim &\sum_{\ell\in\Z} 2^{\f{\ell}2}\|\D_\ell^\v \na_\h v_L\|_{L^2_t(L^2)}\\
\lesssim &\biggl(\sum_{\ell\in\Z}\Bigl(\sum_{k\in\Z}2^{k}\|\D_k^\h\D_\ell^\v \na_\h v_L\|_{L^2_t(L^2)}^2\Bigr)^{\f12}\biggr)^{\f12}\\
&\times \biggl(\sum_{\ell\in\Z}\Bigl(\sum_{k\in\Z}2^{k}\|\D_k^\h\D_\ell^\v  \p_3 v_L\|_{L^2_t(L^2)}^2\Bigr)^{\f12}\biggr)^{\f12}\\
\lesssim& (\nu_\h\nu_\v)^{-\f14}\bigl(\sqrt{\nu_\h}\|\na_\h v_L\|_{\wt{L}^2_t(\cB^{\f12,0})}\bigr)^{\f12}
\bigl(\sqrt{\nu_\v}\|\p_3 v_L\|_{\wt{L}^2_t(\cB^{\f12,0})}\bigr)^{\f12}
\\
\lesssim &(\nu_\h\nu_\v)^{-\f14}\|v_0\|_{\cB^{\f12,0}}.
\end{split}
\eeno This leads to the first  inequality of \eqref{S3eq1}.

On the other hand, we get, by first applying the operator $\D_\ell^\v$ to the system \eqref{S2eq1} and then taking
$L^2$ inner product of the resulting equation with $2^\ell\D_\ell^\v v_L,$ that
\beno
2^{\ell-1}\f{d}{dt}\|\D_\ell^\v v_L(t)\|_{L^2}^2+\nu_\h2^\ell\|\na_\h \D_\ell^\v v_L\|_{L^2}^2+\nu_\v2^\ell\|\p_3\D_\ell^\v v_L\|_{L^2}^2=0.
\eeno
Integrating the above equality over $[0,t]$ and taking square root of the resulting equality, and then taking
$\ell^1$ norm with respect to $\ell\in \Z,$ we obtain the second  inequality of \eqref{S3eq1}. This completes the proof of the proposition.
\end{proof}

\begin{lem}\label{S3lem1}
{\sl Let $\buh_0$ and $\na_\h\buh_0$ be in $L^2(\R^3)\cap L^\infty(\R_\v;L^2(\R^2_\h)).$ Then \eqref{S2eq2} has a unique global solution so that
\beq \label{S3eq5a}
\|\buh\|_{L^\infty_t(L^\infty_\v(L^2_\h))}\leq \|\buh_0\|_{L^\infty_\v(L^2_\h)}.
\eeq
If moreover, $\buh_0\in L^\infty(\R_\v; \dH^{-\d}(\R^2_\h))$ for some $\d\in ]0,1[,$ then we have
\beq \label{S3eq6}
\begin{split}
\int_0^\infty\|\na_\h \buh(t)\|_{L^\infty_\v(L^2_\h)}^2\,dt\leq &  A_{\nu_\h,\d}(\buh_0) \with\\
 A_{\nu_\h,\d}(\buh_0)=&C_\d \nu_\h^{-1}\exp\bigl(C_\d\nu_\h^{-2}\|\buh_0\|_{L^\infty_\v(L^2_\h)}^2(1+\nu_\h^{-2}\|\buh_0\|_{L^\infty_\v(L^2_\h)}^2)\bigr)\\
&\times\biggl(\f{\|\na_\h \buh_0\|_{L^\infty_\v(L^2_\h)}^2\|\buh_0\|_{L^\infty_\v((\dB^{-\d}_{2,\infty})_\h)}^{\f2\d} }{\|\buh_0\|_{L^\infty_\v(L^2_\h)}^{\f2\d}}
+\|\buh_0\|_{L^\infty_\v(L^2_\h)}^2\biggr).
\end{split}
\eeq}
\end{lem}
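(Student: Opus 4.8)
The plan is to treat \eqref{S2eq2} as a two-dimensional-in-$x_\h$ Navier-Stokes system with $z$ playing the role of a parameter, so that the vertical diffusion term $-\nu_\v\e^2\p_z^2\buh$ is a favorable dissipative contribution that we may simply keep on the good side of every energy inequality (or even discard, since we only need upper bounds). First I would establish the $L^\infty_\v(L^2_\h)$ bound \eqref{S3eq5a}: apply $\D_k^\h$-free energy estimates in the horizontal variable only, i.e. for a.e. fixed $z$ multiply the momentum equation by $\buh$ and integrate over $\R^2_\h$. The pressure term $(\na_\h p^\h\,|\,\buh)_{L^2_\h}$ vanishes because $\dive_\h\buh=0$; the transport term $(\buh\cdot\na_\h\buh\,|\,\buh)_{L^2_\h}$ vanishes for the same reason; and the vertical term contributes $\nu_\v\e^2\int_{\R^2_\h}|\na_z\buh|^2$-type quantities only after integrating in $z$, but if we take the $L^\infty$ norm in $z$ before integrating we can bound it crudely — more cleanly, integrate the pointwise-in-$z$ identity, use that $\p_z^2$ is dissipative, and conclude $\|\buh(t,\cdot,z)\|_{L^2_\h}\le\|\buh_0(\cdot,z)\|_{L^2_\h}$ for a.e. $z$ after a Gronwall-free argument since the right side has no bad term. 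Taking $\sup_z$ gives \eqref{S3eq5a}. Global existence and uniqueness then follow from the two-dimensional theory uniformly in $z$, together with this a priori bound.

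Next I would prove the integrated enstrophy-type bound \eqref{S3eq6}. The structure $C_\d\nu_\h^{-1}\exp(\cdots)(\cdots)$ signals a Gronwall argument on $\|\na_\h\buh(t)\|_{L^\infty_\v(L^2_\h)}^2$. For a.e. fixed $z$, apply the horizontal energy estimate to $\na_\h\buh$ (equivalently, to the 2D vorticity $\om^\h=\p_1\buh^2-\p_2\buh^1$): multiply the $\buh$-equation by $-\D_\h\buh$ and integrate over $\R^2_\h$, obtaining
\beno
\f12\f{d}{dt}\|\na_\h\buh\|_{L^2_\h}^2+\nu_\h\|\D_\h\buh\|_{L^2_\h}^2+\nu_\v\e^2\|\p_z\na_\h\buh\|_{L^2_\h}^2
=\bigl(\buh\cdot\na_\h\buh\,\big|\,\D_\h\buh\bigr)_{L^2_\h},
\eeno
where the vertical term again has a favorable sign after the $z$-integration and may be dropped. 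The nonlinear term is the classical 2D obstruction: estimate it by Ladyzhenskaya/Gagliardo-Nirenberg in $\R^2_\h$, $|(\buh\cdot\na_\h\buh\,|\,\D_\h\buh)_{L^2_\h}|\lesssim\|\buh\|_{L^2_\h}\|\na_\h\buh\|_{L^2_\h}\|\D_\h\buh\|_{L^2_\h}$ (after an integration by parts that exploits $\dive_\h\buh=0$ to kill the worst piece), then absorb $\|\D_\h\buh\|_{L^2_\h}^2$ into the $\nu_\h$ term via Young, leaving a factor $C_\d\nu_\h^{-1}\|\buh\|_{L^2_\h}^2\|\na_\h\buh\|_{L^2_\h}^2$. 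Integrating over $z$ and applying Gronwall in $t$ with multiplier $C_\d\nu_\h^{-1}\|\buh\|_{L^\infty_\v(L^2_\h)}^2$ and using \eqref{S3eq5a} produces the exponential $\exp(C_\d\nu_\h^{-2}\|\buh_0\|_{L^\infty_\v(L^2_\h)}^2)$; the extra $(1+\nu_\h^{-2}\|\buh_0\|^2)$ inside the exponent will come from a slightly more careful splitting of the cubic term or from bounding a lower-order contribution.

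The remaining task — and I expect this to be the genuine subtlety, since it is the only place the $\dH^{-\d}$ hypothesis enters — is to supply the prefactor in \eqref{S3eq6}, namely to bound $\int_0^\infty\|\na_\h\buh(t)\|_{L^\infty_\v(L^2_\h)}^2\,dt$ (not just to control the supremum in time). The Gronwall step above controls $\|\na_\h\buh\|_{L^\infty_t(L^\infty_\v(L^2_\h))}^2$; to get the time-integral one needs a lower bound on the rate of horizontal dissipation, which in 2D is governed by the decay of the heat semigroup applied to data in a negative-regularity space. This is why the hypothesis $\buh_0\in L^\infty_\v(\dH^{-\d}_\h)$, equivalently $\buh_0\in L^\infty_\v((\dB^{-\d}_{2,\infty})_\h)$, is imposed: interpolating $\|\na_\h\buh\|_{L^2_\h}$ between $\|\buh\|_{\dH^{-\d}_\h}$ and $\|\D_\h\buh\|_{L^2_\h}$ gives $\|\na_\h\buh\|_{L^2_\h}^{2}\le\|\buh\|_{\dH^{-\d}_\h}^{\f{2}{\d+1}\cdot\d}\|\na_\h\buh\|_{L^2_\h}^{\cdots}$-type relations, and a propagation estimate for the $\dH^{-\d}_\h$-norm along \eqref{S2eq2} (obtained by testing against $(-\D_\h)^{-\d}\buh$, controlling the nonlinear term by the already-bounded quantities) keeps $\|\buh(t)\|_{L^\infty_\v(\dH^{-\d}_\h)}$ comparable to its initial value. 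Feeding this into the dissipation integral $\nu_\h\int_0^\infty\|\D_\h\buh\|_{L^2_\h}^2\,dt$, which the energy identity bounds by $\|\buh_0\|_{L^2_\h}^2$, and combining with the interpolation inequality yields exactly the displayed combination
$\f{\|\na_\h\buh_0\|_{L^\infty_\v(L^2_\h)}^2\|\buh_0\|_{L^\infty_\v((\dB^{-\d}_{2,\infty})_\h)}^{2/\d}}{\|\buh_0\|_{L^\infty_\v(L^2_\h)}^{2/\d}}+\|\buh_0\|_{L^\infty_\v(L^2_\h)}^2$.
Throughout, every estimate must be performed pointwise (or essentially uniformly) in $z$ before taking $\sup_z$, and the $\e$-weighted vertical dissipation should simply be retained or dropped as convenient; the main obstacle is organizing the interpolation-plus-propagation argument for the negative-index norm so that the constants assemble into precisely $A_{\nu_\h,\d}(\buh_0)$.
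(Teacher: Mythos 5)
The paper does not prove this lemma from first principles: it invokes Theorem~1.2 and estimate (2.4) of \cite{CZ5} (which assert exactly \eqref{S3eq5a} and \eqref{S3eq6} in the normalized case $\nu_\h=1$), and the only actual work in the paper's proof is the rescaling $w^\h(t,x_\h,z)=\nu_\h^{-1/2}\buh(t,\nu_\h^{1/2}x_\h,z)$, which reduces to $\nu_\h=1$ and, upon unscaling the norms of $w^\h_0$, produces the precise constant $A_{\nu_\h,\d}(\buh_0)$. Your proposal is therefore an attempt to reprove the cited theorem, and as written it has two genuine gaps. First, the pointwise-in-$z$ decay $\|\buh(t,\cdot,z)\|_{L^2_\h}\le\|\buh_0(\cdot,z)\|_{L^2_\h}$ is false: testing against $\buh$ in $L^2_\h$ for fixed $z$ leaves the term $\f{\nu_\v\e^2}{2}\,\p_z^2\|\buh(t,\cdot,z)\|_{L^2_\h}^2$ on the right, which has no sign for fixed $z$; vertical diffusion transfers horizontal energy between levels. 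Only the supremum in $z$ is monotone, and that requires a maximum-principle argument in the $z$ variable (evaluate at a maximizing $z_0$, where $\p_z^2\le0$), not the ``Gronwall-free'' pointwise argument you describe.

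The second and more serious gap is in \eqref{S3eq6}. Your Gronwall multiplier is $C\nu_\h^{-1}\|\buh(t)\|_{L^\infty_\v(L^2_\h)}^2$, which by \eqref{S3eq5a} is bounded but \emph{not integrable} on $[0,\infty)$; the resulting bound grows like $\exp(C\nu_\h^{-2}\|\buh_0\|_{L^\infty_\v(L^2_\h)}^2\,t)$ and cannot yield the time-uniform constant $A_{\nu_\h,\d}(\buh_0)$. Closing the argument requires quantitative time decay of $\|\buh(t)\|_{L^\infty_\v(L^2_\h)}$, and this is precisely where the hypothesis $\buh_0\in L^\infty_\v(\dH^{-\d}_\h)$ enters (a Schonbek/Wiegner-type mechanism, which is the real content of Theorem~1.2 of \cite{CZ5}); you correctly identify this as the subtle point but then defer it rather than supply it. Two of your supporting claims are also incorrect as stated: the bound $|(\buh\cdot\na_\h\buh\,|\,\D_\h\buh)_{L^2_\h}|\lesssim\|\buh\|_{L^2_\h}\|\na_\h\buh\|_{L^2_\h}\|\D_\h\buh\|_{L^2_\h}$ is not a valid 2-D Gagliardo--Nirenberg estimate (the correct ones give either $\|\buh\|_{L^2}^{1/2}\|\na_\h\buh\|_{L^2}\|\D_\h\buh\|_{L^2}^{3/2}$ or $\|\na_\h\buh\|_{L^2}^{2}\|\D_\h\buh\|_{L^2}$, both of which lead back to needing $\int_0^\infty\|\na_\h\buh\|_{L^\infty_\v(L^2_\h)}^2\,dt$ -- circular); and the energy identity controls $\nu_\h\int_0^\infty\|\na_\h\buh\|_{L^2(\R^3)}^2\,dt$, not $\nu_\h\int_0^\infty\|\D_\h\buh\|_{L^2_\h}^2\,dt$, and in any case only in the $L^2(\R^3)$ topology -- passing the time-integrated dissipation from $L^2_\v$ to $L^\infty_\v$ is exactly the difficulty the cited theorem resolves.
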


\begin{proof}   Theorem 1.2 of \cite{CZ5} ensures the global existence of solutions to \eqref{S2eq2}. Moreover, (2.4)
of \cite{CZ5} gives \eqref{S3eq5a}. To prove the
estimate \eqref{S3eq6},
we introduce
\beq \label{S3eq3}
w^\h(t,x_\h,z)\eqdefa \nu_\h^{-\f12}\buh(t, \nu_\h^{\f12}x_\h,z) \andf q^\h(t,x_\h,z)\eqdefa \nu_\h^{-1}p^\h(t, \nu_\h^{\f12}x_\h, z).
\eeq
Then in view of \eqref{S2eq2}, we write
\begin{equation}\label{S3eq4}
\begin{cases}
\partial_t w^\h+w^\h\cdot\na_\h w^\h-\Delta_\h w^\h-{\nu_{\rm v}}\e^2\partial_z^2 w^\h=-\na_\h q^\h\quad\mbox{for}\ (t,x)\in \R^+\times\R^3,\\
\dive_\h w^\h =0,\\
w^\h|_{t=0}=w_0^\h=\nu_\h^{-\f12}\buh_0(\nu_\h^{\f12}x_\h,z).
\end{cases}
\end{equation}
It follows from Theorem 1.2 of \cite{CZ5} that
\beq \label{S3eq5}
\begin{split}
\int_0^\infty\|\na_\h w^\h(t)\|_{L^\infty_\v(L^2_\h)}^2\,dt\leq & C_\d \exp\bigl(C_\d\|w_0^\h\|_{L^\infty_\v(L^2_\h)}^2(1+\|w_0^\h\|_{L^\infty_\v(L^2_\h)}^2)\bigr)\\
&\times\biggl(\f{\|\na_\h w_0^\h\|_{L^\infty_\v(L^2_\h)}^2\|w_0^\h\|_{L^\infty_\v((\dB^{-\d}_{2,\infty})_\h)}^{\f2\d} }{\|w_0^\h\|_{L^\infty_\v(L^2_\h)}^{\f2\d}}
+\|w_0^\h\|_{L^\infty_\v(L^2_\h)}^2\biggr).
\end{split}
\eeq
Yet by virtue of \eqref{S3eq3}, we have
\beno
\begin{split}
&\|w_0^\h\|_{L^\infty_\v(L^2_\h)}=\nu_\h^{-1}\|\buh_0\|_{L^\infty_\v(L^2_\h)}, \quad \|\na_\h w_0^\h\|_{L^\infty_\v(L^2_\h)}
=\nu_\h^{-\f12}\|\na_\h\buh_0\|_{L^\infty_\v(L^2_\h)}\\
&\|w_0^\h\|_{L^\infty_\v((\dB^{-\d}_{2,\infty})_\h)}=\nu_\h^{-1-\f\d2}\|u_0^\h\|_{L^\infty_\v((\dB^{-\d}_{2,\infty})_\h)}, \andf\\
&\int_0^\infty\|\na_\h w^\h(t)\|_{L^\infty_\v(L^2_\h)}^2\,dt=\nu_\h^{-1}\int_0^\infty\|\na_\h \buh(t)\|_{L^\infty_\v(L^2_\h)}^2\,dt,
\end{split}
\eeno
from which and \eqref{S3eq5}, we deduce \eqref{S3eq6}.
\end{proof}

\begin{prop}\label{S3prop2}
{\sl Under the assumptions of Lemma \ref{S3lem1}, for any $t>0,$ we have
\beq \label{S3eq7}
\begin{split}
\|\buh\|_{\wt{L}^\infty_t(B^{0,\f12})}+\sqrt{\nu_\h}\|\na_\h \buh&\|_{\wt{L}^2_t(B^{0,\f12})}+\sqrt{\nu_\v}\e\|\p_z \buh\|_{\wt{L}^2_t(B^{0,\f12})}\\
\lesssim&\|\buh_0\|_{B^{0,\f12}}\exp\Bigl(C\nu_\h^{-3}\|\buh_0\|_{L^\infty_\v(L^2_\h)}^2 A_{\nu_\h,\d}(\buh_0)\Bigr),
 \end{split}
\eeq where $A_{\nu_\h,\d}(\buh_0)$ is given by \eqref{S3eq6}.}
\end{prop}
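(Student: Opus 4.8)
The plan is to run a vertical Littlewood--Paley energy estimate on \eqref{S2eq2}, in the spirit of Proposition \ref{S3prop1}, the genuinely new ingredient being the control of the horizontal convection term $\buh\cdot\na_\h\buh$ in the $B^{0,\f12}$ topology, which will be achieved with the help of the bounds \eqref{S3eq5a}--\eqref{S3eq6} of Lemma \ref{S3lem1}. First I would apply $\Dv$ to \eqref{S2eq2} and take the $L^2(\R^3)$ inner product of the result with $\Dv\buh$. Because $\dive_\h\buh=0$, the pressure drops out, since $\bigl(\Dv\na_\h p^\h\mid\Dv\buh\bigr)_{L^2}=-\bigl(\Dv p^\h\mid\dive_\h\Dv\buh\bigr)_{L^2}=0$, and one is left, for every $j\in\Z$, with
\begin{equation*}
\f12\f{d}{dt}\|\Dv\buh\|_{L^2}^2+\nu_\h\|\na_\h\Dv\buh\|_{L^2}^2+\nu_\v\e^2\|\p_z\Dv\buh\|_{L^2}^2=-\bigl(\Dv\dive_\h(\buh\otimes\buh)\mid\Dv\buh\bigr)_{L^2},
\end{equation*}
where we wrote $\buh\cdot\na_\h\buh=\dive_\h(\buh\otimes\buh)$.

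The heart of the matter is the convection term. After an integration by parts, $|\bigl(\Dv\dive_\h(\buh\otimes\buh)\mid\Dv\buh\bigr)_{L^2}|\leq\|\Dv(\buh\otimes\buh)\|_{L^2}\|\na_\h\Dv\buh\|_{L^2}$, so one needs a product estimate for $\buh\otimes\buh$ in $B^{0,\f12}$. I would decompose $\buh\otimes\buh$ by Bony's paraproduct--remainder formula in the \emph{vertical} variable and estimate each piece as in Lemma \ref{S3lem0}, but now exploiting the extra information that $\buh$ and $\na_\h\buh$ belong to $L^\infty_\v(L^2_\h)$: the low--frequency paraproduct factor is bounded by $\|S^\v_{j'-1}\buh\|_{L^\infty_\v(L^4_\h)}\leq\|\buh\|_{L^\infty_\v(L^4_\h)}\lesssim\|\buh\|_{L^\infty_\v(L^2_\h)}^{\f12}\|\na_\h\buh\|_{L^\infty_\v(L^2_\h)}^{\f12}$, while the embedding $\dot H^{\f12}(\R^2_\h)\hookrightarrow L^4_\h$ and Lemma \ref{lemBern} take care of the remaining factors; this yields the mixed estimate
\begin{equation*}
\|\buh\otimes\buh\|_{B^{0,\f12}}\lesssim\|\buh\|_{L^\infty_\v(L^2_\h)}^{\f12}\|\na_\h\buh\|_{L^\infty_\v(L^2_\h)}^{\f12}\|\buh\|_{B^{0,\f12}}^{\f12}\|\na_\h\buh\|_{B^{0,\f12}}^{\f12}.
\end{equation*}
Plugging this into the bound for the convection term, summing the dyadic inequality with the weight $2^{j}$, and invoking Young's inequality $xy\leq\f34x^{\f43}+\f14y^4$ to absorb a small multiple of $\nu_\h\|\na_\h\buh\|_{B^{0,\f12}}^2$ into the dissipation, the convection term is controlled by $\f{\nu_\h}{100}\|\na_\h\buh\|_{B^{0,\f12}}^2+C\nu_\h^{-3}\|\buh\|_{L^\infty_\v(L^2_\h)}^2\|\na_\h\buh\|_{L^\infty_\v(L^2_\h)}^2\|\buh\|_{B^{0,\f12}}^2$.

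To conclude, since Gronwall's lemma cannot be applied directly to a Chemin--Lerner norm, I would carry out the two previous steps at the level of each dyadic block, integrate in time, take square roots, multiply by $2^{j/2}$ and sum over $j$ using the time--weighted norm of Definition \ref{defpz} with the weight $f(t)\eqdef C\nu_\h^{-3}\|\buh_0\|_{L^\infty_\v(L^2_\h)}^2\|\na_\h\buh(t)\|_{L^\infty_\v(L^2_\h)}^2$, which is legitimate because \eqref{S3eq5a} gives $\|\buh(t)\|_{L^\infty_\v(L^2_\h)}\leq\|\buh_0\|_{L^\infty_\v(L^2_\h)}$, and which satisfies
\begin{equation*}
\int_0^\infty f(t)\,dt\leq C\nu_\h^{-3}\|\buh_0\|_{L^\infty_\v(L^2_\h)}^2\int_0^\infty\|\na_\h\buh(t)\|_{L^\infty_\v(L^2_\h)}^2\,dt\leq C\nu_\h^{-3}\|\buh_0\|_{L^\infty_\v(L^2_\h)}^2A_{\nu_\h,\d}(\buh_0)
\end{equation*}
by \eqref{S3eq6}. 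A Gronwall argument on $t\mapsto\|\buh\|_{\wt L^\infty_{t}(B^{0,\f12})}^2$, exactly in the spirit of \cite{PZ1}, then turns the resulting inequality into the exponential bound \eqref{S3eq7}, and the two dissipative norms $\sqrt{\nu_\h}\|\na_\h\buh\|_{\wt L^2_t(B^{0,\f12})}$ and $\sqrt{\nu_\v}\e\|\p_z\buh\|_{\wt L^2_t(B^{0,\f12})}$ are then read off from the viscous terms.

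The main obstacle is the interplay of Step~2 with Step~3: one must organize the vertical Bony decomposition of the quadratic term $\buh\otimes\buh$ so that the only non--scaling--invariant information used is the $L^\infty_\v(L^2_\h)$ control of $\buh$ and $\na_\h\buh$ (whose time integral is tamed by \eqref{S3eq6}), and so that the surviving power of $\|\na_\h\buh\|_{B^{0,\f12}}$ is absorbable by the horizontal viscosity. In particular the remainder part $R^\v(\buh,\buh)$, which cannot be localized at vertical frequency $\sim 2^j$, has to be handled together with the time--weighted Chemin--Lerner norm so that the whole estimate closes \emph{without} any smallness assumption on $\buh_0$; this is precisely the reason the norm of Definition \ref{defpz} is introduced.
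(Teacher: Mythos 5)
Your proposal is correct and follows essentially the same route as the paper: a vertical dyadic energy estimate on \eqref{S2eq2}, Bony decomposition of $\buh\otimes\buh$ in the vertical variable with the low-frequency factor measured in $L^\infty_\v(L^4_\h)\hookleftarrow L^\infty_\v(L^2_\h)^{1/2}\cdot L^\infty_\v(L^2_\h)^{1/2}$, and closure via the time-weighted Chemin--Lerner norm of Definition \ref{defpz} with the exponential-weight (weighted Gronwall) trick, the weight $\|\buh\|_{L^\infty_\v(L^4_\h)}^4\lesssim\|\buh_0\|_{L^\infty_\v(L^2_\h)}^2\|\na_\h\buh\|_{L^\infty_\v(L^2_\h)}^2$ being integrable in time thanks to \eqref{S3eq5a} and \eqref{S3eq6}. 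The paper implements your last step by introducing $\buh_\la=\buh\exp(-\la\int_0^t g)$ and choosing $\la=C^2\nu_\h^{-3}$ so that the weighted norm on the right is absorbed by the damping term on the left, which is exactly the mechanism you invoke from \cite{PZ1}.
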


\begin{proof} Let us denote
\beq \label{S3eq8}
g(t)\eqdefa \|\buh(t)\|_{L^\infty_\v(L^4_\h)}^4\andf f_\la(t)\eqdefa f(t)\exp\left(-\la \int_0^tg(t')\,dt'\right).
\eeq
Then by virtue of \eqref{S2eq2}, we write
\begin{equation*}
\partial_t\buh_\la+\la g(t) \buh_\la+\buh\cdot\na_\h\buh_\la-\nu_\h\Delta_\h \buh_\la-\nu_{\rm v}\e^2\partial_z^2 \buh_\la=-\na_\h p^\h_\la.
\end{equation*}
By applying $\D_\ell^\v$ to the above equation and then taking $L^2$ inner product of the resulting equation with
$\D_\ell^\v\buh_\la,$ we obtain
\beq \label{S3eq9}
\begin{split}
\f12\f{d}{dt}\|\D_\ell^\v\buh_\la(t)\|_{L^2}^2+&\la g(t)\|\D_\ell^\v\buh_\la(t)\|_{L^2}^2\\
+&\nu_\h\|\D_\ell^\v\na_\h\buh_\la\|_{L^2}^2
+\e^2\nu_\v\|\D_\ell^\v\p_z\buh_\la\|_{L^2}^2=\bigl(\D_\ell^\v(\buh\otimes\buh_\la) | \D_\ell^v\na_\h\buh_\la\bigr)_{L^2}.
\end{split}
\eeq
By applying Bony's decomposition \eqref{pd} to $\buh\otimes\buh_\la$ for the vertical variable, one has
\beno
\buh\otimes\buh_\la=2T^\v_{{\buh}}\buh_\la+R^\v(\buh,\buh_\la).
\eeno
Due to the support properties to the Fourier of the terms in $T^\v_{\buh}\buh_\la,$ we deduce
\beno
\begin{split}
\int_0^t&\bigl|\bigl(\D_\ell^\v (T^\v_{\buh}\buh_\la) | \D_\ell^v\na_\h\buh_\la\bigr)_{L^2}\bigr|\,dt'\\
\lesssim &\sum_{|\ell'-\ell|\leq 4}\int_0^t\|S_{\ell'-1}^\v\buh\|_{L^\infty_\v(L^4_\h)}\|\D_{\ell'}^\v\buh_\la\|_{L^2_\v(L^4_\h)}\|\D_\ell^\v\na_\h\buh_\la\|_{L^2}\,dt'\\
\lesssim &\sum_{|\ell'-\ell|\leq 4}\int_0^t\|\buh\|_{L^\infty_\v(L^4_\h)}\|\D_{\ell'}^\v\buh_\la\|_{L^2}^{\f12}\|\D_{\ell'}^\v\na_\h\buh_\la\|_{L^2}^{\f12}\|\D_\ell^\v\na_\h\buh_\la\|_{L^2}\,dt'.
\end{split}
\eeno
By applying H\"older's inequality and using Definition \ref{defpz}, we get
 \beno
\begin{split}
\int_0^t&\bigl|\bigl(\D_\ell^\v (T^\v_{\buh}\buh_\la) | \D_\ell^v\na_\h\buh_\la\bigr)_{L^2}\bigr|\,dt'\\
\lesssim &\sum_{|\ell'-\ell|\leq 4}\Bigl(\int_0^t\|\buh\|_{L^\infty_\v(L^4_\h)}^4\|\D_{\ell'}^\v\buh_\la\|_{L^2}^{2}\,dt\Bigr)^{\f14}\|\D_{\ell'}^\v\na_\h\buh_\la\|_{L^2_t(L^2)}^{\f12}
\|\D_\ell^\v\na_\h\buh_\la\|_{L^2_t(L^2)}\\
\lesssim &d_\ell^22^{-\ell}\|\buh_\la\|_{\wt{L}^2_{t,g}(B^{0,\f12})}^{\f12}\|\na_\h\buh_\la\|_{\wt{L}^2_{t}(B^{0,\f12})}^{\f32}.
\end{split}
\eeno
Along the same line, we have
\beno
\begin{split}
\int_0^t&\bigl|\bigl(\D_\ell^\v(R^\v({\buh},\buh_\la)) | \D_\ell^\v\na_\h\buh_\la\bigr)_{L^2}\bigr|\,dt'\\
\lesssim &\sum_{\ell'\geq \ell-3}\int_0^t\|\wt{\D}_{\ell'}^\v\buh\|_{L^\infty_\v(L^4_\h)}\|{\D}_{\ell'}^\v\buh_\la\|_{L^2_\v(L^4_h)}\|\D_\ell^\v\na_\h\buh_\la\|_{L^2}\,dt'\\
\lesssim &\sum_{\ell'\geq \ell-3}\Bigl(\int_0^t\|\buh\|_{L^\infty_\v(L^4_\h)}^4\|\D_{\ell'}^\v\buh_\la\|_{L^2}^{2}\,dt'\Bigr)^{\f14}\|\D_{\ell'}^\v\na_\h\buh_\la\|_{L^2_t(L^2)}^{\f12}
\|\D_\ell^\v\na_\h\buh_\la\|_{L^2_t(L^2)}\\
\lesssim &d_\ell 2^{-\ell}\|\buh_\la\|_{\wt{L}^2_{t,g}(B^{0,\f12})}^{\f12}\|\na_\h\buh\|_{\wt{L}^2_{t}(B^{0,\f12})}^{\f32}\sum_{\ell'\geq\ell-3}d_{\ell'}2^{-\f{\ell-\ell'}2},
\end{split}
\eeno
which implies
\beno
\int_0^t\bigl|\bigl(\D_\ell^\v(R^\v({\buh},\buh_\la)) | \D_\ell^\v\na_\h\buh_\la\bigr)_{L^2}\bigr|\,dt'
\lesssim d_\ell^22^{-\ell}\|\buh_\la\|_{\wt{L}^2_{t,g}(B^{0,\f12})}^{\f12}\|\na_\h\buh_\la\|_{\wt{L}^2_{t}(B^{0,\f12})}^{\f32}.
\eeno
As a result, it comes out
\beq\label{S3eq10}
\begin{split}
\int_0^t\bigl|\bigl(\D_\ell^\v({\buh}\otimes\buh_\la) | \D_\ell^\v\na_\h\buh_\la\bigr)_{L^2}\bigr|\,dt
\lesssim d_\ell^22^{-\ell}\|\buh_\la\|_{\wt{L}^2_{t,g}(B^{0,\f12})}^{\f12}\|\na_\h\buh_\la\|_{\wt{L}^2_{t}(B^{0,\f12})}^{\f32}.
\end{split}
\eeq
By integrating \eqref{S3eq9} over $[0,t]$ and then inserting \eqref{S3eq10} into the resulting inequality, we find
\beno
\begin{split}
\|\D_\ell^\v\buh_\la\|_{L^\infty_t(L^2)}^2+&\la\int_0^t g(t)\|\D_\ell^\v\buh_\la(t)\|_{L^2}^2\,dt
+\nu_\h\|\D_\ell^\v\na_\h\buh_\la\|_{L^2_T(L^2)}^2\\
+&\e^2\nu_\v\|\D_\ell^\v\p_z\buh_\la\|_{L^2_T(L^2)}^2\lesssim d_\ell^22^{-\ell}\|\buh_\la\|_{\wt{L}^2_{t,g}(\cB^{0,\f12})}^{\f12}\|\na_\h\buh_\la\|_{\wt{L}^2_{t}(\cB^{0,\f12})}^{\f32}.
\end{split}
\eeno
Multiplying the above inequality by $2^{\ell}$ and taking square root of the resulting inequality,
 and then summing up the resulting inequality over $\Z,$ we achieve
\beno
\begin{split}
&\|\buh_\la\|_{\wt{L}^\infty_t(B^{0,\f12})}+\sqrt{\la}\|\buh_\la\|_{\wt{L}^2_{t,g}(B^{0,\f12})}+\sqrt{\nu_\h}\|\na_\h\buh_\la\|_{\wt{L}^2_{T}(B^{0,\f12})}
+\sqrt{\nu_\v}\e\|\p_z\buh_\la\|_{\wt{L}^2_{t}(B^{0,\f12})}\\
&\leq \|\buh_0\|_{\cB^{0,\f12}}+C\|\buh_\la\|_{\wt{L}^2_{t,g}(B^{0,\f12})}^{\f14}\|\na_\h\buh_\la\|_{\wt{L}^2_{t}(B^{0,\f12})}^{\f34}\\
&\leq \|\buh_0\|_{\cB^{0,\f12}}+\f{\sqrt{\nu_\h}}2\|\na_\h\buh\|_{\wt{L}^2_{T}(B^{0,\f12})}+C\nu_\h^{-\f32}\|\buh_\la\|_{\wt{L}^2_{t,g}(B^{0,\f12})}.
\end{split}
\eeno
Taking $\la=C^2\nu_\h^{-3}$ in the above inequality leads to
\beno
\begin{split}
&\|\buh_\la\|_{\wt{L}^\infty_t(B^{0,\f12})}+\sqrt{\nu_\h}\|\na_\h\buh_\la\|_{\wt{L}^2_{t}(B^{0,\f12})}
+\sqrt{\nu_\v}\e\|\p_z\buh_\la\|_{\wt{L}^2_{t}(B^{0,\f12})}\leq 2\|\buh_0\|_{B^{0,\f12}},
\end{split}
\eeno
from which, and \eqref{S3eq8}, we infer
\beno
\begin{split}
&\Bigl(\|\buh\|_{\wt{L}^\infty_t(B^{0,\f12})}+\sqrt{\nu_\h}\|\na_\h\buh\|_{\wt{L}^2_{t}(B^{0,\f12})}
+\sqrt{\nu_\v}\e\|\p_z\buh\|_{\wt{L}^2_{t}(B^{0,\f12})}\Bigr)\exp\left(-\la \int_0^tg(t')\,dt'\right)\\
&\leq \|\buh_\la\|_{\wt{L}^\infty_T(B^{0,\f12})}+\sqrt{\nu_\h}\|\na_\h\buh_\la\|_{\wt{L}^2_{T}(B^{0,\f12})}
+\sqrt{\nu_\v}\e\|\p_z\buh_\la\|_{\wt{L}^2_{T}(B^{0,\f12})}\\
&\leq 2\|\buh_0\|_{B^{0,\f12}},
\end{split}
\eeno
Note that
\beno
\|\buh(t)\|_{L^\infty_\v(L^4_\h)}\leq C\|\buh(t)\|_{L^\infty_\v(L^2_\h)}^{\f12}\|\na_\h\buh(t)\|_{L^\infty_\v(L^2_\h)}^{\f12},
\eeno
we deduce that
\beno
\begin{split}
\Bigl(\|\buh\|_{\wt{L}^\infty_t(\cB^{0,\f12})}&+\sqrt{\nu_\h}\|\na_\h\buh\|_{\wt{L}^2_{t}(\cB^{0,\f12})}
+\sqrt{\nu_\v}\e\|\p_z\buh\|_{\wt{L}^2_{t}(\cB^{0,\f12})}\Bigr)\\
&\leq 2\|\buh_0\|_{\cB^{0,\f12}}\exp\Bigl(\f{C}{\nu_\h^3}\|\buh\|_{L^\infty_t(L^\infty_\v(L^2_\h))}^2\int_0^t \|\na_\h\buh(t')\|_{L^\infty_\v(L^2_\h)}^2\,dt'\Bigr).
\end{split}
\eeno
This together with \eqref{S3eq5a} and \eqref{S3eq6} ensures \eqref{S3eq7}.
\end{proof}

\begin{prop}\label{S3prop3}
{\sl Let $\buh$ be a smooth enough solution of \eqref{S2eq2}. Then for any $s'\in ]-1,1[$ and $s\in ]-1,0[,$ we have
\ben
\label{S3eq14} &&\|\buh\|_{L^\infty_t(\dH^{s',0})}^2+\nu_\h\|\na_\h\buh\|_{L^2_t(\dH^{s',0})}^2\leq \|\buh_0\|_{\dH^{s',0}}^2\exp\left(C\nu_\h^{-1}A_{\nu_\h,\d}(\buh_0)\right);\\
\label{S3eq15} &&\|\p_z\buh\|_{L^\infty_t(\dH^{s',0})}^2+\nu_\h\|\na_\h\p_z\buh\|_{L^2_t(\dH^{s',0})}^2\leq \|\p_z\buh_0\|_{\dH^{s',0}}^2\exp\left(C\nu_\h^{-1}A_{\nu_\h,\d}(\buh_0)\right);
\een
and \beq
\label{S3eq16} \begin{split}
\|\p_z^2\buh\|_{L^\infty_t(H^{s,0})}^2+&\nu_\h\|\na_\h\p_z^2\buh\|_{L^2_t(\dH^{s,0})}^2\\
\leq& \bigl(\|\p_z^2\buh_0\|_{\dH^{s,0}}^2+C\nu_\h^{-4}\|\p_z\buh_0\|_{\dH^{s,0}}^2\|\p_z\buh_0\|_{L^2}^4\bigr)\exp\left(C\nu_\h^{-1}A_{\nu_\h,\d}(\buh_0)\right),
\end{split}
\eeq where $A_{\nu_\h,\d}(\buh_0)$ is given by \eqref{S3eq6}.
}
\end{prop}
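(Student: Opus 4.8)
The plan is to regard \eqref{S2eq2} as a two-dimensional Navier-Stokes system in the horizontal variables $x_\h$ in which the vertical variable $z$ plays the role of a passive parameter, and to run $z$-integrated energy estimates localized in the horizontal frequencies. The only external input is Lemma \ref{S3lem1}, which furnishes $\int_0^\infty\|\na_\h\buh(t)\|_{L^\infty_\v(L^2_\h)}^2\,dt\leq A_{\nu_\h,\d}(\buh_0)$; this is precisely the quantity that will appear in all the exponential factors. We shall repeatedly use that $\dive_\h\buh=0$, hence $\dive_\h\p_z\buh=\dive_\h\p_z^2\buh=0$ as well, so that the pressure gradients disappear whenever we test against horizontally solenoidal quantities.

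\textbf{The estimates \eqref{S3eq14} and \eqref{S3eq15}.} Apply $\D_k^\h$ to \eqref{S2eq2}, take the $L^2$ inner product with $\D_k^\h\buh$, and write $\buh\cdot\na_\h\buh=\dive_\h(\buh\otimes\buh)$; since $\dive_\h\D_k^\h\buh=0$ the pressure term drops and one gets
$$\f12\f{d}{dt}\|\D_k^\h\buh\|_{L^2}^2+\nu_\h\|\D_k^\h\na_\h\buh\|_{L^2}^2+\nu_\v\e^2\|\D_k^\h\p_z\buh\|_{L^2}^2=-\bigl(\D_k^\h\dive_\h(\buh\otimes\buh) | \D_k^\h\buh\bigr)_{L^2}.$$
The right-hand side is estimated through an anisotropic Bony decomposition in the horizontal variable together with Lemma \ref{lemBern}, in the same spirit as in the proofs of Lemma \ref{S3lem0} and Proposition \ref{S3prop2}, so as to obtain
$$\bigl|\bigl(\D_k^\h\dive_\h(\buh\otimes\buh) | \D_k^\h\buh\bigr)_{L^2}\bigr|\lesssim d_k^2\,2^{-2ks'}\,\|\na_\h\buh\|_{L^\infty_\v(L^2_\h)}\,\|\buh\|_{\dH^{s',0}}\,\|\na_\h\buh\|_{\dH^{s',0}},$$
where the restriction $s'\in]-1,1[$ is what makes the horizontal product converge at both low and high frequencies. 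Multiplying by $2^{2ks'}$, summing over $k\in\Z$ and using Young's inequality to absorb $\f{\nu_\h}{2}\|\na_\h\buh\|_{\dH^{s',0}}^2$ yields
$$\f{d}{dt}\|\buh\|_{\dH^{s',0}}^2+\nu_\h\|\na_\h\buh\|_{\dH^{s',0}}^2\lesssim\nu_\h^{-1}\,\|\na_\h\buh\|_{L^\infty_\v(L^2_\h)}^2\,\|\buh\|_{\dH^{s',0}}^2,$$
so that Gronwall's inequality together with Lemma \ref{S3lem1} gives \eqref{S3eq14}. For \eqref{S3eq15} one differentiates \eqref{S2eq2} once in $z$: then $\p_z\buh$ solves the same transport-diffusion equation with the additional linear term $\p_z\buh\cdot\na_\h\buh$ and a forcing $-\na_\h\p_z p^\h$ which again drops upon testing with $\D_k^\h\p_z\buh$. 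The convective term is handled as above, and $\p_z\buh\cdot\na_\h\buh$ is controlled by the same horizontal para-product analysis, again producing a contribution of the form $\nu_\h^{-1}\|\na_\h\buh\|_{L^\infty_\v(L^2_\h)}^2\,\|\p_z\buh\|_{\dH^{s',0}}^2$; Gronwall and Lemma \ref{S3lem1} then give \eqref{S3eq15}. Specializing to $s'=0$ also records $\|\p_z\buh\|_{L^\infty_t(L^2)}^2+\nu_\h\int_0^t\|\na_\h\p_z\buh\|_{L^2}^2\,dt'\leq\|\p_z\buh_0\|_{L^2}^2\exp\bigl(C\nu_\h^{-1}A_{\nu_\h,\d}(\buh_0)\bigr)$, which is used in the last step.

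\textbf{The estimate \eqref{S3eq16}.} Differentiating \eqref{S2eq2} twice in $z$ gives
$$\p_t\p_z^2\buh+\buh\cdot\na_\h\p_z^2\buh+2\,\p_z\buh\cdot\na_\h\p_z\buh+\p_z^2\buh\cdot\na_\h\buh-\nu_\h\D_\h\p_z^2\buh-\nu_\v\e^2\p_z^4\buh=-\na_\h\p_z^2p^\h.$$
We run the $\dH^{s,0}$ energy estimate with $s\in]-1,0[$, the pressure term disappearing as before. The convective term and $\p_z^2\buh\cdot\na_\h\buh$ are treated exactly as in the previous step, contributing $\nu_\h^{-1}\|\na_\h\buh\|_{L^\infty_\v(L^2_\h)}^2\|\p_z^2\buh\|_{\dH^{s,0}}^2$ to the Gronwall coefficient; the narrower range $s<0$ (rather than $s<1$) is forced by the product law needed for the remaining term. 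That remaining term, $2\,\p_z\buh\cdot\na_\h\p_z\buh$, has no cancellation against the advection; using $\dive_\h\p_z\buh=0$ we integrate by parts in the horizontal variable,
$$\bigl(\p_z\buh\cdot\na_\h\p_z\buh | \p_z^2\buh\bigr)_{\dH^{s,0}}=-\bigl(\p_z\buh\otimes\p_z\buh | \na_\h\p_z^2\buh\bigr)_{\dH^{s,0}}\leq\f{C}{\nu_\h}\|\p_z\buh\otimes\p_z\buh\|_{\dH^{s,0}}^2+\f{\nu_\h}{100}\|\na_\h\p_z^2\buh\|_{\dH^{s,0}}^2,$$
the last term being absorbed on the left. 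For the first term we invoke the two-dimensional product law $\|fg\|_{\dot H^{s}(\R^2_\h)}\lesssim\|f\|_{L^2(\R^2_\h)}\|g\|_{\dot H^{s+1}(\R^2_\h)}$ (valid precisely because $s+1\in]0,1[$) together with $\|\p_z\buh\|_{L^\infty_\v(L^2_\h)}^2\lesssim\|\p_z\buh\|_{L^2}\|\p_z^2\buh\|_{L^2}$, so that after integration in time
$$\int_0^\infty\|\p_z\buh\otimes\p_z\buh\|_{\dH^{s,0}}^2\,dt\lesssim\|\p_z\buh\|_{L^\infty_t(L^\infty_\v(L^2_\h))}^2\int_0^\infty\|\na_\h\p_z\buh\|_{\dH^{s,0}}^2\,dt.$$
Here the second factor is $\leq\nu_\h^{-1}\|\p_z\buh_0\|_{\dH^{s,0}}^2\exp\bigl(C\nu_\h^{-1}A_{\nu_\h,\d}(\buh_0)\bigr)$ by \eqref{S3eq15} with exponent $s$, and the first factor is controlled by combining the bounds on $\p_z\buh$ in $L^2$ and on $\p_z^2\buh$ already being established; the bookkeeping of the powers of $\nu_\h$ then produces the factor $\nu_\h^{-4}\|\p_z\buh_0\|_{\dH^{s,0}}^2\|\p_z\buh_0\|_{L^2}^4$ appearing in \eqref{S3eq16}. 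Feeding all of this into Gronwall's inequality for the $\p_z^2\buh$ estimate, with that factor playing the role of an effective initial datum, yields \eqref{S3eq16}.

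\textbf{Main obstacle.} Everything but the term $2\,\p_z\buh\cdot\na_\h\p_z\buh$ in the $\p_z^2\buh$ equation fits into the scheme already used for Propositions~\ref{S3prop1} and~\ref{S3prop2}. That term is the crux: it must be absorbed into the horizontal viscosity while its time integral is bounded only through the already-obtained estimates on $\buh$ and $\p_z\buh$ — in particular an $L^\infty_t(L^\infty_\v(L^2_\h))$ bound on $\p_z\buh$, which itself must be squeezed out of the $L^2$- and $\dH^{s,0}$-level information (a mild self-improving step). This is what forces both the restriction $s\in]-1,0[$ and the particular power $\nu_\h^{-4}$ of the additive correction in \eqref{S3eq16}.
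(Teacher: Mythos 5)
Your treatment of \eqref{S3eq14} and \eqref{S3eq15} is sound and matches the paper in spirit (the paper simply cites Lemma 4.2 of \cite{CZ5} for these, whereas you reconstruct the frequency-localized energy estimate; either way the Gronwall coefficient is $\nu_\h^{-1}\|\na_\h\buh\|_{L^\infty_\v(L^2_\h)}^2$ and Lemma \ref{S3lem1} closes it). The problem is in \eqref{S3eq16}, precisely at the term you identify as the crux. After integrating by parts you bound $\|\p_z\buh\otimes\p_z\buh\|_{\dH^{s,0}}$ by putting the $L^\infty_\v$ on the $L^2_\h$ factor, which forces you to invoke $\|\p_z\buh\|_{L^\infty_\v(L^2_\h)}^2\lesssim\|\p_z\buh\|_{L^2}\|\p_z^2\buh\|_{L^2}$ and hence an $L^\infty_t(L^2)$ bound on $\p_z^2\buh$. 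That quantity is exactly the $s=0$ endpoint of the estimate you are proving, which is excluded (and cannot be obtained by running your scheme at $s=0$, since the product law for $\p_z^2\buh\cdot\na_\h\buh$ fails there); it is not absorbable into the left-hand side, which only controls $\|\p_z^2\buh\|_{L^\infty_t(\dH^{s,0})}$ with $s<0$. Moreover, even granting such a bound, your route yields a contribution of order $\nu_\h^{-2}\|\p_z\buh_0\|_{L^2}\,\|\p_z^2\buh\|_{L^\infty_t(L^2)}\,\|\p_z\buh_0\|_{\dH^{s,0}}^2$, which does not reduce to the stated $\nu_\h^{-4}\|\p_z\buh_0\|_{\dH^{s,0}}^2\|\p_z\buh_0\|_{L^2}^4$; the "bookkeeping of the powers of $\nu_\h$" you defer does not in fact produce that factor.

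The repair is to distribute the norms the other way. Pointwise in $z$ use the two-dimensional product law in the form
\begin{equation*}
\bigl|\bigl(\p_z\buh\otimes\p_z\buh(t,\cdot,z)\,|\,\na_\h\p_z^2\buh(t,\cdot,z)\bigr)_{\dH^s_\h}\bigr|
\lesssim \|\na_\h\p_z\buh(t,\cdot,z)\|_{\dH^s_\h}\,\|\p_z\buh(t,\cdot,z)\|_{L^2_\h}\,\|\na_\h\p_z^2\buh(t,\cdot,z)\|_{\dH^s_\h},
\end{equation*}
then integrate in $z$, taking $L^\infty_\v$ of the \emph{first} factor and Cauchy--Schwarz in $z$ on the last two. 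The key step is the interpolation $\|\na_\h\p_z\buh\|_{L^\infty_\v(\dH^{s}_\h)}\lesssim \|\na_\h\p_z\buh\|_{\dH^{s,0}}^{1/2}\|\na_\h\p_z^2\buh\|_{\dH^{s,0}}^{1/2}$, which trades half a power onto the dissipation of $\p_z^2\buh$; Young's inequality with exponents $4$ and $4/3$ then gives $C\nu_\h^{-3}\|\na_\h\p_z\buh\|_{\dH^{s,0}}^2\|\p_z\buh\|_{L^2}^4$ plus an absorbable multiple of $\nu_\h\|\na_\h\p_z^2\buh\|_{\dH^{s,0}}^2$. Integrating in time, $\|\p_z\buh\|_{L^\infty_t(L^2)}^4\leq\|\p_z\buh_0\|_{L^2}^4\exp(C\nu_\h^{-1}A_{\nu_\h,\d}(\buh_0))$ and $\int_0^t\|\na_\h\p_z\buh\|_{\dH^{s,0}}^2\,dt'\leq\nu_\h^{-1}\|\p_z\buh_0\|_{\dH^{s,0}}^2\exp(C\nu_\h^{-1}A_{\nu_\h,\d}(\buh_0))$ by \eqref{S3eq15}, and this is exactly where the $\nu_\h^{-4}$ and the fourth power of $\|\p_z\buh_0\|_{L^2}$ come from; no information on $\p_z^2\buh$ beyond what Gronwall supplies is needed.
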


\begin{proof} In fact, \eqref{S3eq14} and \eqref{S3eq15} follows directly from    Lemma 4.2 of \cite{CZ5} and Lemma \ref{S3lem1}. To prove \eqref{S3eq16}, we get,
by first applying $\p_z^2$ to \eqref{S2eq2} and then taking $\dot{H}^s_\h$ inner product of the resulting equation with $\p_z^2\buh,$ that
\beq \label{S3eq17}
\begin{split}
\f12&\f{d}{dt}\|\p_z^2\buh(t,\cdot,z)\|_{\dH^s_\h}^2+\nu_\h\|\na_\h\p_z^2\buh(t,\cdot,z)\|_{\dH^s_\h}^2 +\nu_\v\e^2\|\p_z^2\buh(t,\cdot,z)\|_{\dH^s_\h}^2\\
&-\f{\nu_\v\e^2}2\p_z^2\|\p_z^2\buh(t,\cdot,z)\|_{\dH^s_\h}^2
=-\bigl(\buh\cdot\na_\h\p_z^2\buh(t,\cdot,z)  | \p_z^2\buh(t,\cdot,z)\bigr)_{\dH^s_\h}\\
&-2\bigl(\p_z\buh\cdot\na_\h\p_z\buh(t,\cdot,z)  | \p_z^2\buh(t,\cdot,z)\bigr)_{\dH^s_\h}-\bigl(\p_z^2\buh\cdot\na_\h\buh(t,\cdot,z)  | \p_z^2\buh(t,\cdot,z)\bigr)_{\dH^s_\h}.
\end{split}
\eeq
Applying Lemma 1.1 of \cite{ch92} yields
\beno
\begin{split}
\bigl|\bigl(\buh\cdot\na_\h\p_z^2\buh(t,\cdot,z)  | \p_z^2\buh(t,\cdot,z)\bigr)_{\dH^s_\h}\bigr|
\lesssim \|\na_\h\buh(t,\cdot,z)\|_{L^2_\h}\|\na_\h\p_z^2\buh(t,\cdot,z)\|_{\dH^{s}_\h}\|\p_z^2\buh(t,\cdot,z)\|_{\dH^s_\h}.
\end{split}
\eeno Due to $s\in ]-1,0[,$
the law of product in Sobolev spaces implies that
\beno
\begin{split}
\bigl|\bigl(\p_z^2\buh\cdot\na_\h\buh(t,\cdot,z)  | \p_z^2\buh(t,\cdot,z)\bigr)_{\dH^s_\h}\bigr|
\lesssim \|\na_\h\p_z^2\buh(t,\cdot,z)\|_{\dH^s_\h}\|\na_\h\buh(t,\cdot,z)\|_{L^2_\h}\|\p_z^2\buh(t,\cdot,z)\|_{\dH^s_\h}.
\end{split}
\eeno
Along the same line,  one has
\beno
\begin{split}
\bigl|\bigl(\p_z\buh&\cdot\na_\h\p_z\buh(t,\cdot,z)  | \p_z^2\buh(t,\cdot,z)\bigr)_{\dH^s_\h}\bigr|\\
=&\bigl|\bigl(\p_z\buh\otimes\p_z\buh(t,\cdot,z)  | \na_\h\p_z^2\buh(t,\cdot,z)\bigr)_{\dH^s_\h}\bigr|\\
\lesssim& \|\na_\h\p_z\buh(t,\cdot,z)\|_{\dH^s_\h}\|\p_z\buh(t,\cdot,z)\|_{L^2_\h}\|\na_\h\p_z^2\buh(t,\cdot,z)\|_{\dH^s_\h}.
\end{split}
\eeno
Inserting the above estimates into \eqref{S3eq17} and integrating the resulting equality with respect to $z$ gives
rise to
\beno
\begin{split}
\f12\f{d}{dt}\|\p_z^2\buh(t)\|_{\dH^{s,0}}^2+\nu_\h\|\na_\h\p_z^2\buh(t)\|_{\dH^{s,0}}^2
\leq C\Bigl(&\|\na_\h\buh\|_{L^\infty_\v(L^2_\h)}\|\na_\h\p_z^2\buh\|_{\dH^{s,0}}\|\p_z^2\buh\|_{\dH^{s,0}}\\
&+ \|\na_\h\p_z\buh\|_{L^\infty_\v(\dH^{s}_\h)}\|\p_z\buh\|_{L^2}\|\na_\h\p_z^2\buh\|_{\dH^{s,0}}\Bigr).
\end{split}
\eeno
Notice that
\beno
\|\na_\h\p_z\buh\|_{L^\infty_\v(\dH^{s}_\h)}\lesssim \|\na_\h\p_z\buh\|_{\dH^{s,0}}^{\f12}\|\na_\h\p_z^2\buh\|_{\dH^{s,0}}^{\f12},
\eeno
we get, by applying Young's inequality, that
\beno
\begin{split}
\f12&\f{d}{dt}\|\p_z^2\buh(t)\|_{\dH^{s,0}}^2+\nu_\h\|\na_\h\p_z^2\buh(t)\|_{\dH^{s,0}}^2
\leq \f{\nu_\h}2\|\na_\h\p_z^2\buh(t)\|_{\dH^{s,0}}^2\\
&\qquad+C{\nu_\h}^{-1}
\|\na_\h\buh\|_{L^\infty_\v(L^2_\h)}^2\|\p_z^2\buh\|_{\dH^{s,0}}^2+C\nu_\h^{-3}\|\na_\h\p_z\buh(t)\|_{\dH^{s,0}}^2\|\p_z\buh(t)\|_{L^2}^4.
\end{split}
\eeno
Applying Gronwall's inequality and using \eqref{S3eq15} leads to \eqref{S3eq16}.
\end{proof}

\begin{col}\label{S3col1}
{\sl Under the assumptions of Theorem \ref{thPZ9}, for any $\th\in [0,1/2[,$ we have
\beq \label{S3eq18}
\begin{split}
&\nu_\h^{\f{1-\th}2}\|\buh\|_{L^{\f2{1-\th}}_t(\cB^{\f12,\f12-\th})}\leq C\|\buh_0\|_{\dH^{-\f12,0}}^{\f12}\|\buh_0\|_{\dH^{\f12,0}}^{\th}\|\p_z\buh_0\|_{\dH^{-\f12,0}}^{\f12-\th}\exp\left(C\nu_\h^{-1}A_{\nu_\h,\d}(\buh_0)\right);\\
&\nu_\h^{\f12}\|\p_z\buh\|_{L^2_t(\cB^{\f12,\f12})}\leq C\|\p_z\buh_0\|_{\dH^{-\f12,0}}^{\f12}\\
&\qquad\times \bigl(\|\p_z^2\buh_0\|_{\dH^{-\f12,0}}
+\nu_\h^{-4}\|\p_z\buh_0\|_{\dH^{-\f12,0}}^2 \|\p_z\buh_0\|_{L^2}^4\bigr)
^{\f12}\exp\left(C\nu_\h^{-1}A_{\nu_\h,\d}(\buh_0)\right).
\end{split}
\eeq
}
\end{col}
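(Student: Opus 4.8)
The plan is to deduce both inequalities in \eqref{S3eq18} from the {\it a priori} bounds \eqref{S3eq14}--\eqref{S3eq16} by the same three steps: an anisotropic interpolation inequality turning the $\ell^1$-in-$\ell$ norm $\cB^{\f12,\cdot}$ into a product of two homogeneous anisotropic Sobolev norms of the kind controlled in Proposition \ref{S3prop3}; H\"older's inequality in time; and insertion of \eqref{S3eq14}--\eqref{S3eq16} with suitable horizontal indices. The elementary fact behind the first step is that for every $\sigma\in\,]0,1[$ there is $C_\sigma>0$ with
\beq\label{plan:interp}
\|a\|_{\cB^{\f12,\sigma}}\leq C_\sigma\,\|a\|_{\dH^{\f12,0}}^{1-\sigma}\,\|\p_z a\|_{\dH^{\f12,0}}^{\sigma},\qquad a\in\cS_h'(\R^3).
\eeq
To prove \eqref{plan:interp} I would set $b_\ell\eqdefa\bigl(\sum_{k\in\Z}2^{k}\|\D_k^\h\D_\ell^\v a\|_{L^2}^2\bigr)^{\f12}$, so that $\|a\|_{\cB^{\f12,\sigma}}=\sum_{\ell\in\Z}2^{\ell\sigma}b_\ell$, $\|a\|_{\dH^{\f12,0}}\sim(\sum_\ell b_\ell^2)^{\f12}$ and $\|\p_z a\|_{\dH^{\f12,0}}\sim(\sum_\ell 2^{2\ell}b_\ell^2)^{\f12}$, and then run the classical $\ell^1$-interpolation: split $\sum_\ell 2^{\ell\sigma}b_\ell$ into $\ell\leq\ell_0$ and $\ell>\ell_0$, apply Cauchy--Schwarz to each piece, and choose $2^{\ell_0}\sim\|\p_z a\|_{\dH^{\f12,0}}/\|a\|_{\dH^{\f12,0}}$, the two geometric series converging precisely because $0<\sigma<1$. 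I will also use freely that $\|\na_\h f\|_{\dH^{s,0}}\sim\|f\|_{\dH^{s+1,0}}$.

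For the first line of \eqref{S3eq18}, apply \eqref{plan:interp} to $a=\buh(t)$ with $\sigma=\f12-\th\in\,]0,1[$ to get, pointwise in $t$, $\|\buh(t)\|_{\cB^{\f12,\f12-\th}}\lesssim\|\buh(t)\|_{\dH^{\f12,0}}^{\f12+\th}\|\p_z\buh(t)\|_{\dH^{\f12,0}}^{\f12-\th}$. Then I would write $\|\buh(t)\|_{\dH^{\f12,0}}^{\f12+\th}=\|\buh(t)\|_{\dH^{\f12,0}}^{\th}\,\|\buh(t)\|_{\dH^{\f12,0}}^{\f12}$, bound the first factor by the constant $\|\buh\|_{L^\infty_t(\dH^{\f12,0})}^{\th}$, and apply H\"older in $t$ with the exponents dictated by $\f{1-\th}2=\f14+\f{1-2\th}4$ --- that is, $L^4_t$ for $\|\buh(t)\|_{\dH^{\f12,0}}^{\f12}$ and $L^{4/(1-2\th)}_t$ for $\|\p_z\buh(t)\|_{\dH^{\f12,0}}^{\f12-\th}$, both admissible because $\|\buh(\cdot)\|_{\dH^{\f12,0}}$ and $\|\p_z\buh(\cdot)\|_{\dH^{\f12,0}}$ lie in $L^2_t$ by \eqref{S3eq14}--\eqref{S3eq15}. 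This produces
\beq\label{plan:first}
\|\buh\|_{L^{\f2{1-\th}}_t(\cB^{\f12,\f12-\th})}\lesssim\|\buh\|_{L^\infty_t(\dH^{\f12,0})}^{\th}\,\|\buh\|_{L^2_t(\dH^{\f12,0})}^{\f12}\,\|\p_z\buh\|_{L^2_t(\dH^{\f12,0})}^{\f12-\th}.
\eeq
Finally I would insert $\|\buh\|_{L^\infty_t(\dH^{\f12,0})}\lesssim\|\buh_0\|_{\dH^{\f12,0}}\exp(C\nu_\h^{-1}A_{\nu_\h,\d}(\buh_0))$ from \eqref{S3eq14} with $s'=\f12$, $\|\buh\|_{L^2_t(\dH^{\f12,0})}\sim\|\na_\h\buh\|_{L^2_t(\dH^{-\f12,0})}\lesssim\nu_\h^{-\f12}\|\buh_0\|_{\dH^{-\f12,0}}\exp(C\nu_\h^{-1}A_{\nu_\h,\d}(\buh_0))$ from \eqref{S3eq14} with $s'=-\f12$, and $\|\p_z\buh\|_{L^2_t(\dH^{\f12,0})}\lesssim\nu_\h^{-\f12}\|\p_z\buh_0\|_{\dH^{-\f12,0}}\exp(C\nu_\h^{-1}A_{\nu_\h,\d}(\buh_0))$ from \eqref{S3eq15} with $s'=-\f12$; the powers of $\nu_\h$ add to $-\f{1-\th}2$, so after multiplying by $\nu_\h^{\f{1-\th}2}$ one recovers exactly the first line of \eqref{S3eq18}.

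The second line is obtained the same way with $a=\p_z\buh(t)$ and $\sigma=\f12$ in \eqref{plan:interp}, giving pointwise $\|\p_z\buh(t)\|_{\cB^{\f12,\f12}}\lesssim\|\p_z\buh(t)\|_{\dH^{\f12,0}}^{\f12}\|\p_z^2\buh(t)\|_{\dH^{\f12,0}}^{\f12}$; taking the $L^2_t$ norm and using Cauchy--Schwarz in $t$ (both factors then in $L^4_t$) yields $\|\p_z\buh\|_{L^2_t(\cB^{\f12,\f12})}\lesssim\|\p_z\buh\|_{L^2_t(\dH^{\f12,0})}^{\f12}\|\p_z^2\buh\|_{L^2_t(\dH^{\f12,0})}^{\f12}$. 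It then suffices to substitute $\|\p_z\buh\|_{L^2_t(\dH^{\f12,0})}\lesssim\nu_\h^{-\f12}\|\p_z\buh_0\|_{\dH^{-\f12,0}}\exp(C\nu_\h^{-1}A_{\nu_\h,\d}(\buh_0))$ from \eqref{S3eq15} ($s'=-\f12$) and $\|\p_z^2\buh\|_{L^2_t(\dH^{\f12,0})}\sim\|\na_\h\p_z^2\buh\|_{L^2_t(\dH^{-\f12,0})}\lesssim\nu_\h^{-\f12}\bigl(\|\p_z^2\buh_0\|_{\dH^{-\f12,0}}^2+C\nu_\h^{-4}\|\p_z\buh_0\|_{\dH^{-\f12,0}}^2\|\p_z\buh_0\|_{L^2}^4\bigr)^{\f12}\exp(C\nu_\h^{-1}A_{\nu_\h,\d}(\buh_0))$ from \eqref{S3eq16} ($s=-\f12$); collecting the $\nu_\h$-powers ($-\f12$) and multiplying by $\nu_\h^{\f12}$ gives the second line of \eqref{S3eq18}, the precise arrangement of the initial-data norm being inherited from \eqref{S3eq16}.

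I expect no genuinely new analytic input beyond Proposition \ref{S3prop3}: the whole argument is driven by \eqref{plan:interp} and two applications of H\"older in time. The only points needing care are the exponent bookkeeping in those H\"older steps and the fact that $C_\sigma$ in \eqref{plan:interp}, hence the constant in the first line of \eqref{S3eq18}, degenerates as $\sigma=\f12-\th\downarrow0$, i.e. as $\th\uparrow\f12$; this is harmless since $\th<\f12$ is fixed. In short, the ``hard part'' here is organizational rather than conceptual.
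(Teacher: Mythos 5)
Your proposal is correct and follows essentially the same route as the paper: the paper's proof consists precisely of the two time-integrated interpolation inequalities you derive (your \eqref{plan:first} and its analogue for $\p_z\buh$), followed by insertion of Proposition \ref{S3prop3} with $s'=\pm\f12$ and $s=-\f12$. You merely supply the details the paper leaves implicit (the dyadic proof of the spatial interpolation and the H\"older exponent bookkeeping), and your final expression for the second line agrees with what \eqref{S3eq16} actually yields, the slight mismatch with the displayed form of \eqref{S3eq18} being a typo in the paper's statement rather than a gap in your argument.
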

\begin{proof} Indeed it follows from interpolation inequality in Besov spaces that
\beno
\begin{split}
&\|\buh\|_{L^{\f2{1-\th}}_t(\cB^{\f12,\f12-\th})}\lesssim \|\buh\|_{L^\infty_t(H^{\f12,0})}^{\th}\|\buh\|_{L^2_t(H^{\f12,0})}^{\f12}\|\p_z\buh\|_{L^2_t(H^{\f12,0})}^{\f12-\th};\\
&\|\p_z\buh\|_{L^2_t(\cB^{\f12,\f12})}
\lesssim\|\p_z\buh\|_{L^2_t(H^{\f12,0})}^{\f12}\|\p_z^2\buh\|_{L^2_t(H^{\f12,0})}^{\f12},
\end{split}
\eeno
which together with Proposition \ref{S3prop3} ensures \eqref{S3eq18}.
\end{proof}

 \setcounter{equation}{0}
\section{ The proof of Theorem \ref{thPZ9}}\label{Sect5}

The goal of this section is to present the proof of Theorem \ref{thPZ9}.

\begin{proof}[Proof of Theorem \ref{thPZ9}] For simplicity, we just present the {\it priori} estimates for smooth enough solutions of \eqref{S2eq4}.
 Let $\buh$ be a smooth enough solution of \eqref{S2eq2}.
 Let $\th\in [0,1/2[,$  $\la_i, i=1,2,3,$ and $\na_\e\eqdefa (\na_\h, \e \p_3),$ we denote
\beq \label{S4eq1}
\begin{split}
&f_1(t)\eqdefa \|\buh(t)\|_{B^{0,\f12}}^2\|\na_\h\buh(t)\|_{B^{0,\f12}}^2, \quad f_2(t)\eqdefa \|\na_\e \buh(t)\|_{B^{0,\f12}}^2,\\
& f_3(t)\eqdefa\|\buh(t)\|_{\cB^{\f12,\f12-\th}}^{\f2{1-\th}} +\|\p_z\buh(t)\|_{\cB^{\f12,\f12}}^2\andf\\
& R_\la(t)\eqdefa R(t)\exp\Bigl(-\sum_{i=1}^3\la_i\int_0^tf_i(t')\,dt'\Bigr).
\end{split}
\eeq
And similar notations for $\pi_\la$ and $F_\la.$
Then it follows from \eqref{S2eq4} that
\begin{equation}\label{S4eq2}
\begin{split}
\partial_tR_\la+\Bigl(\sum_{i=1}^3\la_i f_i(t)\Bigr)R_\la+&u\cdot\na R_\la+R_\la\cdot\na(v_L+[\buh]_\e)\\
&-\nu_\h\Delta_\h R_\la-\nu_{\rm v}\partial_3^2 R_\la+\na \pi_\la=F_\la,
\end{split}
\end{equation}
where $F_\la=(F_\la^\h,F^3_\la)$ with $(F^\h,F^3)$ being given by \eqref{S2eq4}.

Applying the operator $\D_\ell^\v$ to \eqref{S4eq2} and taking $L^2$ inner product of the resulting equation with
$\D_\ell^\v R_\la$ yields
\beq \label{S4eq3}
\begin{split}
\f12&\f{d}{dt}\|\D_\ell^\v R_\la(t)\|_{L^2}^2+\sum_{i=1}^3\la_i f_i(t)\|\D_\ell^\v R_\la(t)\|_{L^2}^2 +\nu_\h\|\na_\h\D_\ell^\v R_\la\|_{L^2}^2+\nu_{\rm v}\|\partial_3\D_\ell^\v R_\la\|_{L^2}^2\\
&=-\bigl(\D_\ell^\v(u\cdot\na R_\la+R_\la\cdot\na(v_L+[\buh]_\e)) | \D_\ell^\v R_\la\bigr)_{L^2}+\bigl(\D_\ell^\v F_\la |  \D_\ell^\v R_\la\bigr)_{L^2}.
\end{split}
\eeq

The estimate of the above terms relies on the following lemmas:

\begin{lem}\label{S4lem2}
{\sl  There holds
\beno
\int_0^t\bigl|\bigl(\D_\ell^\v(a_\otimes [\buh]_\e) | \D_\ell^\v b\bigr)_{L^2}\bigr|\,dt\lesssim  d_\ell^2 2^{-{\ell}}
\|a\|_{\wt{L}^2_{t,f_1}(B^{0,\f12})}^{\f12}\|\na_\h a\|_{\wt{L}^2_{t}(B^{0,\f12})}^{\f12}\|b\|_{\wt{L}^2_{t}(B^{0,\f12})}.
\eeno}
\end{lem}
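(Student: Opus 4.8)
\textbf{Proof plan for Lemma \ref{S4lem2}.}

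The plan is to estimate $\int_0^t\bigl|\bigl(\D_\ell^\v(a\otimes [\buh]_\e)\mid\D_\ell^\v b\bigr)_{L^2}\bigr|\,dt'$ by applying the anisotropic Bony decomposition in the vertical variable to the product $a\otimes[\buh]_\e$, so that $a\otimes[\buh]_\e = T^\v_a[\buh]_\e + T^\v_{[\buh]_\e}a + R^\v(a,[\buh]_\e)$. For each of the three pieces I would localize in the vertical frequency, use the support restrictions on the Fourier transform recalled in \eqref{C4}, and then insert in the horizontal variable the product estimate $\|fg\|_{L^2_\h}\lesssim\|f\|_{L^4_\h}\|g\|_{L^4_\h}$ together with $\|f\|_{L^4_\h}\lesssim\|f\|_{L^2_\h}^{\f12}\|\na_\h f\|_{L^2_\h}^{\f12}$. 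The factor $[\buh]_\e$ should be controlled in $L^\infty_\v(L^4_\h)$, which by the preceding inequality (taken in the vertical supremum norm, exactly as in the proof of Proposition \ref{S3prop2}) costs $\|\buh\|_{L^\infty_\v(L^2_\h)}^{\f12}\|\na_\h\buh\|_{L^\infty_\v(L^2_\h)}^{\f12}$; since $f_1(t)=\|\buh(t)\|_{B^{0,\f12}}^2\|\na_\h\buh(t)\|_{B^{0,\f12}}^2$ and $B^{0,\f12}\hookrightarrow L^\infty_\v(L^2_\h)$, this is precisely what the time-weighted norm $\wt{L}^2_{t,f_1}$ is designed to absorb.

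Concretely, for the paraproduct term $T^\v_a[\buh]_\e$, on the support $|\ell'-\ell|\le 4$ I would write
\beno
\begin{split}
\int_0^t\bigl|\bigl(\D_\ell^\v(T^\v_a[\buh]_\e)\mid\D_\ell^\v b\bigr)_{L^2}\bigr|\,dt'
\lesssim \sum_{|\ell'-\ell|\le 4}\int_0^t\|S^\v_{\ell'-1}a\|_{L^2_\v(L^4_\h)}\|\D_{\ell'}^\v[\buh]_\e\|_{L^\infty_\v(L^4_\h)}\|\D_\ell^\v b\|_{L^2_\v(L^4_\h)}\,dt',
\end{split}
\eeno
and then use $\|S^\v_{\ell'-1}a\|_{L^2_\v(L^4_\h)}\lesssim \|a\|_{\wt L^2_\infty(B^{0,\f12})}$-type bounds after extracting $\|\na_\h a\|^{\f12}$, Hölder in time with the weight $f_1$, and Young's inequality; the two remaining pieces $T^\v_{[\buh]_\e}a$ and $R^\v(a,[\buh]_\e)$ are handled the same way, with the standard summation trick $\sum_{\ell'\ge\ell-3}d_{\ell'}2^{-\f{\ell-\ell'}2}\lesssim d_\ell$ used for the remainder term to recover the $d_\ell^2 2^{-\ell}$ decay. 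One key point worth noting is that the $\e$-rescaling in $[\buh]_\e$ is harmless here: $\D_{\ell'}^\v[f]_\e = [\D^\v_{\ell'-\log_2\e}f]_\e$ up to the obvious dilation, but since all the horizontal norms of $[f]_\e$ equal those of $f$ and we only ever use the $L^\infty_\v(L^4_\h)$ norm of the full function $[\buh]_\e$ (not a single vertical block of it), one simply replaces $\|\D_{\ell'}^\v[\buh]_\e\|_{L^\infty_\v(L^4_\h)}$ by $\|\buh\|_{L^\infty_\v(L^4_\h)}$ exactly as in Proposition \ref{S3prop2}, so no power of $\e$ appears.

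The main obstacle I anticipate is bookkeeping the three $\ell^1(\Z)$-summable sequences so that the final bound comes out with a \emph{squared} coefficient $d_\ell^2$ (needed so that after multiplying by $2^\ell$, taking square roots, and summing over $\ell$ one lands back in $B^{0,\f12}$ with a genuine $\ell^1$ sum rather than an $\ell^2$ one): this requires splitting the product of norms carefully — one half-power of the $f_1$-weighted norm of $a$, a full power of $\|\na_\h a\|_{\wt L^2_t(B^{0,\f12})}$ is \emph{not} what appears, rather only $\|\na_\h a\|^{\f12}$, and a full power of $\|b\|_{\wt L^2_t(B^{0,\f12})}$ — and checking at each occurrence of $2^{\ell/2}$ from Lemma \ref{lemBern} that the vertical-regularity exponents balance to $-\ell$ after accounting for $\|a\|^{\f12}$ sitting at regularity $B^{0,\f12}$, $\|\na_\h a\|^{\f12}$ likewise, and $\|b\|$ at regularity $B^{0,\f12}$; the arithmetic $\f12\cdot\f12+\f12\cdot\f12+\f12 = 1$ is exactly right, but it must be tracked through each of the three Bony pieces.
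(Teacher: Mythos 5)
Your overall strategy is the same as the paper's: a vertical Bony decomposition of $a\otimes[\buh]_\e$, the horizontal interpolation $\|f\|_{L^4_\h}\lesssim\|f\|_{L^2_\h}^{\f12}\|\na_\h f\|_{L^2_\h}^{\f12}$, absorption of $\buh$ via $\|\buh\|_{L^\infty_\v(L^4_\h)}\lesssim f_1^{\f14}$ into the weighted norm $\wt{L}^2_{t,f_1}$, and the summation $\sum_{\ell'\ge\ell-N_0}d_{\ell'}2^{-\f{\ell'-\ell}2}\lesssim d_\ell$ for the remainder. (The paper uses the two-term decomposition $T^\v_a[\buh]_\e+\bar{R}^\v(a,[\buh]_\e)$ rather than your three-term one; that is immaterial.) Your sketch for the remainder term matches the paper's argument.

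However, the one step you write out explicitly, the paraproduct term, is arranged in a way that does not close. First, the horizontal exponents $L^4_\h\cdot L^4_\h\cdot L^4_\h$ do not pair to $L^1_\h$ ($\f14+\f14+\f14\ne1$); since $b$ is ultimately measured in $L^2$, the block $\D_\ell^\v b$ must be kept in $L^2_\h$ and only the other two factors placed in $L^4_\h$. Second, and more seriously, you put the low-frequency factor $S^\v_{\ell'-1}a$ in $L^2_\v(L^4_\h)$ and propose to replace $\|\D_{\ell'}^\v[\buh]_\e\|_{L^\infty_\v(L^4_\h)}$ by $\|\buh\|_{L^\infty_\v(L^4_\h)}$, insisting that no single vertical block of $[\buh]_\e$ is ever needed. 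In a paraproduct the low-frequency factor supplies no decay in $\ell'$: at vertical regularity $\f12$ the Bernstein gain $2^{\f{\ell''}2}$ from $L^2_\v\to L^\infty_\v$ exactly cancels the $2^{-\f{\ell''}2}$ coming from $B^{0,\f12}$, and in $L^2_\v$ the sum $\sum_{\ell''\le\ell'-2}d_{\ell''}2^{-\f{\ell''}2}$ need not even converge. Hence all of the factor $d_{\ell'}2^{-\f{\ell'}2}$ must come from the high-frequency block of $[\buh]_\e$; discarding it in favour of the full $\|\buh\|_{L^\infty_\v(L^4_\h)}$ leaves only the $d_\ell2^{-\f{\ell}2}$ contributed by $\D^\v_\ell b$, short of the required $d_\ell^22^{-\ell}$. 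The correct arrangement, which is the paper's, is
\beno
\|S^\v_{\ell'-1}a\|_{L^\infty_\v(L^4_\h)}\,\|\D^\v_{\ell'}[\buh]_\e\|_{L^2_\v(L^4_\h)}\,\|\D^\v_\ell b\|_{L^2},
\eeno
with $\|\D^\v_{\ell'}[\buh]_\e\|_{L^2_\v(L^4_\h)}\lesssim d_{\ell'}(t)\,2^{-\f{\ell'}2}\|\buh\|_{B^{0,\f12}}^{\f12}\|\na_\h\buh\|_{B^{0,\f12}}^{\f12}=d_{\ell'}(t)\,2^{-\f{\ell'}2}f_1(t)^{\f14}$, and the factor $\|a\|_{L^\infty_\v(L^4_\h)}$ handled blockwise through $\|\D^\v_{\ell''}a\|_{L^\infty_\v(L^4_\h)}\lesssim2^{\f{\ell''}2}\|\D^\v_{\ell''}a\|_{L^2}^{\f12}\|\D^\v_{\ell''}\na_\h a\|_{L^2}^{\f12}$ together with Cauchy--Schwarz in time against the weight, which yields $\bigl(\int_0^tf_1^{\f12}\|a\|_{L^\infty_\v(L^4_\h)}^2dt'\bigr)^{\f12}\lesssim\|a\|_{\wt{L}^2_{t,f_1}(B^{0,\f12})}^{\f12}\|\na_\h a\|_{\wt{L}^2_t(B^{0,\f12})}^{\f12}$. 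With that correction the rest of your plan goes through as in the paper.
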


\begin{lem}\label{S4lem3}
{\sl There holds
\beno
\int_0^t\bigl|\bigl(\D_\ell^\v([\buh]_\e\cdot\na_\h v_L) | \D_\ell^\v  a\bigr)_{L^2}\bigr|\,dt'\lesssim d_\ell^2 2^{-{\ell}}\|\na_\h v_L\|_{\wt{L}^2_t(B^{0,\f12})}
\|a\|_{\wt{L}^2_{t,f_1}(B^{0,\f12})}^{\f12}\|\na_\h a\|_{\wt{L}^2_{t}(B^{0,\f12})}^{\f12}.
\eeno}
\end{lem}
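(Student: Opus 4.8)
The plan is to run, for $[\buh]_\e\cdot\na_\h v_L$, the same paradifferential scheme as the one behind Lemma~\ref{S4lem2}. Using Bony's decomposition in the vertical variable I would split
\[
[\buh]_\e\cdot\na_\h v_L=T^\v_{[\buh]_\e}(\na_\h v_L)+T^\v_{\na_\h v_L}([\buh]_\e)+R^\v([\buh]_\e,\na_\h v_L),
\]
apply $\D_\ell^\v$ to each piece, test against $\D_\ell^\v a$, and recombine the dyadic blocks against generic $\ell^1(\Z)$ sequences at the end. The one input replacing the $f_1$-weight of Lemma~\ref{S4lem2} is the pointwise-in-time estimate
\[
\|[\buh]_\e(t)\|_{L^\infty_\v(L^4_\h)}=\|\buh(t)\|_{L^\infty_\v(L^4_\h)}\lesssim \|\buh(t)\|_{B^{0,\f12}}^{\f12}\|\na_\h\buh(t)\|_{B^{0,\f12}}^{\f12}=f_1(t)^{\f14},
\]
where the first equality holds because $L^\infty_\v(L^4_\h)$ is invariant under the vertical dilation $[\,\cdot\,]_\e$, so that no power of $\e$ ever enters, and the inequality follows from Lemma~\ref{lemBern} together with $\dH^{\f12}(\R^2_\h)\hookrightarrow L^4(\R^2_\h)$ applied on each block $\D_\ell^\v\buh$, followed by Cauchy--Schwarz in $\ell$. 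The same $2$-D embedding applied to $a$ gives $\|\D_\ell^\v a\|_{L^2_\v(L^4_\h)}\lesssim\|\D_\ell^\v a\|_{L^2}^{\f12}\|\D_\ell^\v\na_\h a\|_{L^2}^{\f12}$, which is the source of the two half-powers $\|a\|_{\wt{L}^2_{t,f_1}(B^{0,\f12})}^{\f12}$ and $\|\na_\h a\|_{\wt{L}^2_t(B^{0,\f12})}^{\f12}$.

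For the first paraproduct, in which $[\buh]_\e$ sits in a low-frequency truncation, I would transfer the outer $\D_\ell^\v$ onto the test function and use H\"older horizontally in the pattern $L^4_\h\cdot L^2_\h\cdot L^4_\h$ and vertically in the pattern $L^\infty_\v\cdot L^2_\v\cdot L^2_\v$, to get, for $|\ell'-\ell|\le4$,
\[
\bigl|\bigl(\D_\ell^\v(S^\v_{\ell'-1}[\buh]_\e\,\D_{\ell'}^\v\na_\h v_L) | \D_\ell^\v a\bigr)_{L^2}\bigr|\lesssim\|[\buh]_\e\|_{L^\infty_\v(L^4_\h)}\,\|\D_{\ell'}^\v\na_\h v_L\|_{L^2}\,\|\D_\ell^\v a\|_{L^2}^{\f12}\,\|\D_\ell^\v\na_\h a\|_{L^2}^{\f12}.
\]
Integrating in $t'$ and applying H\"older in time with exponents $(4,4,2)$ produces $\bigl(\int_0^t f_1\|\D_\ell^\v a\|_{L^2}^2\,dt'\bigr)^{\f14}\bigl(\int_0^t\|\D_\ell^\v\na_\h a\|_{L^2}^2\,dt'\bigr)^{\f14}\bigl(\int_0^t\|\D_{\ell'}^\v\na_\h v_L\|_{L^2}^2\,dt'\bigr)^{\f12}$; rewriting each time integral through the corresponding norm and summing over $\ell$ and the finitely many admissible $\ell'$ yields the claimed $d_\ell^2\,2^{-\ell}$ bound. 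The other two pieces are handled the same way, with two adjustments. In $T^\v_{\na_\h v_L}([\buh]_\e)$ the factor $\na_\h v_L$ now occurs as $S^\v_{\ell'-1}\na_\h v_L$, which I would estimate in $L^2_t(L^\infty_\v(L^2_\h))$ and then bound, uniformly in $\ell'$, by $\|\na_\h v_L\|_{\wt{L}^2_t(B^{0,\f12})}$; this is precisely why the full, un-localized $\wt{L}^2_t(B^{0,\f12})$-norm of $\na_\h v_L$ occurs in the estimate, with power $1$, while $\D_{\ell'}^\v[\buh]_\e$ is then put in $L^2_\v(L^4_\h)$ and controlled through the $\e$-free $\ell^1(\Z)$ vertical-frequency profile of $[\buh]_\e$ in $B^{0,\f12}$ (again the scaling-invariance of $B^{0,\f12}$ under $[\,\cdot\,]_\e$ makes the powers of $\e$ cancel). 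In $R^\v([\buh]_\e,\na_\h v_L)$ the summation runs over $\ell'\ge\ell-3$, and the geometric factor needed to collapse it back into a single $d_\ell$ comes from the positivity of the total regularity index, exactly as in the proof of Lemma~\ref{S3lem0}.

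The step I expect to be the main obstacle is the bookkeeping for the two terms in which $[\buh]_\e$ enters through a genuine dyadic block $\D_{\ell'}^\v[\buh]_\e$ rather than through a truncation $S^\v_{\ell'-1}[\buh]_\e$: there one has to check that the vertical dilation leaves no stray power of $\e$ — which hinges on $B^{0,\f12}$ being the scaling-critical vertical space — and at the same time that $\na_\h v_L$, now necessarily sitting in a low-frequency truncation, can be absorbed with no loss by its full $\wt{L}^2_t(B^{0,\f12})$-norm. Once these two points are settled, the remainder is the routine H\"older-in-time and $\ell^1$-reindexing machinery already used in Lemma~\ref{S3lem0} and Proposition~\ref{S3prop2}.
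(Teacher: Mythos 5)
Your proposal is correct and follows essentially the same route as the paper: vertical Bony decomposition, the pointwise bound $\|\buh\|_{L^\infty_\v(L^4_\h)}\lesssim f_1^{1/4}$, the horizontal H\"older pattern placing $\na_\h v_L$ in $L^2$ and $a$ in $L^2_\v(L^4_\h)$, and the $(4,4,2)$ H\"older in time feeding the weighted Chemin--Lerner norm. The only (cosmetic) difference is that the paper uses the two-term splitting $T^\v_{[\buh]_\e}\na_\h v_L+\bar R^\v([\buh]_\e,\na_\h v_L)$, whose second piece combines your $T^\v_{\na_\h v_L}([\buh]_\e)$ and $R^\v$ terms and is handled exactly as you describe.
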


\begin{lem}\label{S4lem5}
{\sl There holds
\beno
\begin{split}
\int_0^te^{-\la_2\int_0^{t'}f_2(\tau)\,d\tau}\bigl|\bigl(\D_\ell^\v(v_L\cdot[\na_\e \buh]_\e) | \D_\ell^\v & a\bigr)_{L^2}\bigr|\,dt'\lesssim  d_\ell^2 2^{-{\ell}}\la_2^{-\f14}\|v_L\|_{\wt{L}^\infty_t(\cB^{0,\f12})}^{\f12}
 \\
&\times \|\na_\h v_L\|_{\wt{L}^2_t(B^{0,\f12})}^{\f12}\| a\|_{\wt{L}^2_{t,f_2}(B^{0,\f12})}^{\f12}
\| \na_\h a\|_{\wt{L}^2_{t}(B^{0,\f12})}^{\f12}. \end{split}
\eeno}
\end{lem}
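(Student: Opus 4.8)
The plan is to estimate $\bigl(\D_\ell^\v(v_L\cdot[\na_\e\buh]_\e)\,\big|\,\D_\ell^\v a\bigr)_{L^2}$ in the spirit of Lemmas \ref{S4lem2} and \ref{S4lem3}, the extra ingredient being an anisotropic $L^{4/3}_\h$--$L^4_\h$ duality that redistributes a horizontal half-derivative. Namely, bound $|\bigl(\D_\ell^\v(v_L\cdot[\na_\e\buh]_\e)\,\big|\,\D_\ell^\v a\bigr)_{L^2}|\leq\|\D_\ell^\v(v_L\cdot[\na_\e\buh]_\e)\|_{L^2_\v(L^{4/3}_\h)}\|\D_\ell^\v a\|_{L^2_\v(L^4_\h)}$ and use the two-dimensional Gagliardo--Nirenberg inequality $\|\D_\ell^\v a\|_{L^2_\v(L^4_\h)}\lesssim\|\D_\ell^\v a\|_{L^2}^{\f12}\|\D_\ell^\v\na_\h a\|_{L^2}^{\f12}$; this single step is what produces both $\|a\|_{\wt{L}^2_{t,f_2}(B^{0,\f12})}^{\f12}$ and $\|\na_\h a\|_{\wt{L}^2_t(B^{0,\f12})}^{\f12}$ in the final bound. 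One keeps $[\na_\e\buh]_\e$ carrying a full derivative (rather than integrating by parts) precisely because the $f_2$-weight on $a$ is built from $\|\na_\e\buh\|_{B^{0,\f12}}$, and because $[\na_\e\buh]_\e=\na([\buh]_\e)$ is handled by the Bernstein lemma at the level of $L^2$.

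For the factor $\|\D_\ell^\v(v_L\cdot[\na_\e\buh]_\e)\|_{L^2_\v(L^{4/3}_\h)}$, apply Bony's decomposition in the vertical variable to $v_L\cdot[\na_\e\buh]_\e$; in each of the two paraproducts and the remainder, put $v_L$ --- the only factor we may afford to place in $L^4_\h$ --- into $L^\infty_\v(L^4_\h)$, kept spectrally localized, via $\|\D^\v_m v_L\|_{L^\infty_\v(L^4_\h)}\lesssim 2^{m/2}\|\D^\v_m v_L\|_{L^2}^{\f12}\|\D^\v_m\na_\h v_L\|_{L^2}^{\f12}$ (Lemma \ref{lemBern} plus horizontal Gagliardo--Nirenberg), and put the vertically localized pieces of $[\na_\e\buh]_\e$ into $L^2_\v(L^2_\h)=L^2$, where $\|\D^\v_m[\na_\e\buh]_\e\|_{L^2}\sim d_m\,2^{-m/2}\,\|[\na_\e\buh]_\e\|_{B^{0,\f12}}\sim d_m\,2^{-m/2}\,f_2(t)^{\f12}$. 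The last equivalence rests on the $\e$-independence of $B^{0,\f12}$ under $[\,\cdot\,]_\e$, which follows from $\|[g]_\e\|_{L^2}=\e^{-1/2}\|g\|_{L^2}$ combined with the vertical frequency shift $\D^\v_m[g]_\e\approx[\D^\v_{m+\log_2(1/\e)}g]_\e$, together with $\na[\buh]_\e=[\na_\e\buh]_\e$; this is also where the final bound loses all $\e$-dependence. Summing over the auxiliary vertical indices against the $B^{0,\f12}$ weights --- the remainder term converging because the Besov index $\f12$ is positive --- yields $\|\D_\ell^\v(v_L\cdot[\na_\e\buh]_\e)\|_{L^2_\v(L^{4/3}_\h)}\lesssim d_\ell\,2^{-\ell/2}\,\|\D^\v_m v_L\|_{L^2}^{\f12}\|\D^\v_m\na_\h v_L\|_{L^2}^{\f12}\,f_2(t)^{\f12}$ for the relevant $m\sim\ell$.

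It then remains to integrate $\int_0^t e^{-\la_2\int_0^{t'}f_2}\,|\bigl(\D_\ell^\v(v_L\cdot[\na_\e\buh]_\e)\,\big|\,\D_\ell^\v a\bigr)_{L^2}|\,dt'$ in time. Split $f_2(t')^{\f12}=f_2^{\f14}\cdot f_2^{\f14}$ and apply H\"older in $t'$ with one factor in $L^\infty_t$, namely $\|\D^\v_m v_L\|_{L^2}^{\f12}$, and four in $L^4_t$, namely $\|\D^\v_m\na_\h v_L\|_{L^2}^{\f12}$, $f_2^{\f14}\|\D_\ell^\v a\|_{L^2}^{\f12}$, $\|\D_\ell^\v\na_\h a\|_{L^2}^{\f12}$ and $f_2^{\f14}e^{-\la_2\int_0^{t'}f_2}$; absorbing these into $\|v_L\|_{\wt{L}^\infty_t(\cB^{0,\f12})}^{\f12}$, $\|\na_\h v_L\|_{\wt{L}^2_t(B^{0,\f12})}^{\f12}$, $\|a\|_{\wt{L}^2_{t,f_2}(B^{0,\f12})}^{\f12}$, $\|\na_\h a\|_{\wt{L}^2_t(B^{0,\f12})}^{\f12}$ and --- via $\bigl(\int_0^t f_2(\tau)\,e^{-4\la_2\int_0^\tau f_2(s)\,ds}\,d\tau\bigr)^{\f14}\leq(4\la_2)^{-\f14}$ --- into $\la_2^{-\f14}$, then summing in $\ell$ (and in the auxiliary indices, with $\sum_m d_m\leq1$), gives the claim. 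The main obstacle I expect is exactly this bookkeeping: choosing the duality pairing, the placement of each factor in each Bony block and the distribution of the five time-H\"older exponents so that everything closes with precisely the stated half-powers, stays uniform in $\e$, and never appeals to a quantity that is not controlled --- a $\p_3$-derivative of $a$ or a second horizontal derivative of $\buh$ --- which is why keeping $v_L$ localized, the $L^{4/3}_\h$--$L^4_\h$ pairing and the $[\,\cdot\,]_\e$-scaling identities have to be used just as above.
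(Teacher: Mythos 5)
Your proposal is correct and follows essentially the same route as the paper's proof: vertical Bony decomposition, the anisotropic H\"older pairing with $v_L$ in $L^4_\h$, the $\buh$-factor in $L^2_\h$ and $\D_\ell^\v a$ in $L^2_\v(L^4_\h)$ via Gagliardo--Nirenberg, the dilation invariance of $B^{0,\f12}$ to absorb the $\e$, and the five-factor time H\"older with $\bigl(\int_0^tf_2\,e^{-4\la_2\int_0^{t'}f_2\,d\tau}\,dt'\bigr)^{\f14}\leq(4\la_2)^{-\f14}$ producing the $\la_2^{-\f14}$. One bookkeeping caveat: in the block where $[\na_\e\buh]_\e$ carries the low vertical frequencies (the paper's $\bar R^\v$, or your second paraproduct), that factor must be placed in $L^\infty_\v(L^2_\h)$, where $\|S^\v_{m}[\na_\e\buh]_\e\|_{L^\infty_\v(L^2_\h)}\lesssim\|\na_\e\buh\|_{B^{0,\f12}}=f_2^{\f12}$, and not in $L^2_\v(L^2_\h)$ as your uniform placement suggests, since the sum over low vertical frequencies at regularity $0$ is not controlled by $f_2^{\f12}$ (and plain $L^2$ costs $\e^{-\f12}$); with that adjustment everything closes as you describe.
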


\begin{lem}\label{S4lem6}
{\sl Let $p^\h$ be given by \eqref{S2eq5}. Then  for any $\th\in [0,1/2[,$ we have
\beno
\begin{split}
\int_0^t\exp\Bigl(-\la_3\int_0^{t'}f_3(\tau)\,d\tau\Bigr)&\bigl|\bigl(\D_\ell^\v\p_3[p^\h]_\e | \D_\ell^\v a\bigr)_{L^2}\bigr|\,dt'\\
&\lesssim
 d_\ell^22^{-\ell}\e^{1-\th}\la_3^{-\f12}\|a \|_{\wt{L}^2_{t,f_3}(B^{0,\f12})}^{1-\th}\|\p_3a\|_{\wt{L}^2_t(B^{0,\f12})}^\th.
\end{split}
\eeno}
\end{lem}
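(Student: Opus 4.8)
The plan is to exploit three features of the term $\p_3[p^\h]_\e$: its explicit prefactor $\e$ (since $\p_3[f]_\e=\e[\p_z f]_\e$), the bilinear formula $p^\h=\sum_{i,j=1}^2(-\D_\h)^{-1}\p_i\p_j(\bar u^i\bar u^j)$ from \eqref{S2eq5}, and the fact that the dilation $[\,\cdot\,]_\e$ pushes the vertical frequency $2^\ell$ down to $2^\ell/\e$. The operator $\sum(-\D_\h)^{-1}\p_i\p_j$ is a zeroth--order horizontal Fourier multiplier, hence bounded on $L^2$ and commuting with $\D_\ell^\v$ and with $[\,\cdot\,]_\e$; moreover $\|\D_\ell^\v\p_3[p^\h]_\e\|_{L^2}\sim 2^\ell\|\D_\ell^\v[p^\h]_\e\|_{L^2}$, so it is enough to bound $\|\D_\ell^\v[p^\h]_\e\|_{L^2}$. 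The change of variables $z=\e x_3$ gives $\|\D_\ell^\v[g]_\e\|_{L^2}=\e^{-\f12}\|\wt{\D}^\v_{2^\ell/\e}g\|_{L^2}$, where $\wt{\D}^\v_{2^\ell/\e}$ denotes a smooth vertical truncation onto frequencies of size $2^\ell/\e$; this reduces matters to a high--vertical--frequency estimate for the quadratic term $\buh\otimes\buh$.

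For the latter I would apply Bony's decomposition in the vertical variable to $\buh\otimes\buh$ and estimate each paraproduct/remainder piece by the anisotropic Bernstein inequality (Lemma \ref{lemBern}) and the anisotropic product law (Lemma \ref{S3lem0} and its variants). In every such term one factor carries vertical frequency $\gtrsim 2^{n}$ (with $2^n$ the output frequency), which I place in $L^2_\v(\dH^{\f12}_\h)$ through $\|\p_z\buh\|_{\cB^{\f12,\f12}}$ (gaining $2^{-\f32 m}$ on a block of frequency $2^m$), while the complementary factor is measured in $L^\infty_\v(L^4_\h)$ by $\|\buh\|_{\cB^{\f12,\f12-\th}}$ (costing only $2^{m\th}$, which is summable over the low frequencies precisely because $\th\geq 0$). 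This asymmetric choice is forced: no $\|\na_\h a\|$ term is permitted in the conclusion, so every horizontal gain must come from the $\f12$ horizontal regularity of $\buh$, and one vertical derivative must be spent on exactly one factor to make the frequency exponent negative. The outcome, for a block of frequency $2^n$, is $\|\D_n^\v p^\h(t)\|_{L^2}\lesssim d_n\,2^{n(\th-\f32)}\,\|\buh(t)\|_{\cB^{\f12,\f12-\th}}\|\p_z\buh(t)\|_{\cB^{\f12,\f12}}$; since $\th<\f12$ the exponent $\th-\f32$ is negative and the $\ell^1$ summation is harmless. Substituting $2^n=2^\ell/\e$ turns $2^{n(\th-\f32)}$ into $2^{\ell(\th-\f32)}\e^{\f32-\th}$, which together with the $\e^{-\f12}$ from the rescaling yields exactly $\e^{1-\th}$; by the definition of $f_3$ in \eqref{S4eq1} one then gets $2^{\ell(1-\th)}\|\D_\ell^\v[p^\h]_\e(t)\|_{L^2}\lesssim d_\ell\,2^{-\f\ell2}\,\e^{1-\th}\,f_3(t)^{1-\f\th2}$.

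Finally, to produce the structure $\|a\|^{1-\th}\|\p_3a\|^{\th}$ I would use the vertical Bernstein inequality in the form $\|\D_\ell^\v a\|_{L^2}^{\th}\leq C2^{-\ell\th}\|\D_\ell^\v\p_3a\|_{L^2}^{\th}$ (equivalently, integrate by parts in $x_3$ on a $\th$--fraction of the inner product), which gives $|(\D_\ell^\v\p_3[p^\h]_\e\,|\,\D_\ell^\v a)_{L^2}|\lesssim 2^{\ell(1-\th)}\|\D_\ell^\v[p^\h]_\e\|_{L^2}\|\D_\ell^\v a\|_{L^2}^{1-\th}\|\D_\ell^\v\p_3a\|_{L^2}^{\th}$; then I would insert the weight $f_3^{\pm(1-\th)/2}$ and apply H\"older in $t'$ with exponents $(2,\,\f2{1-\th},\,\f2\th)$. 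Using the bound of the previous paragraph together with the elementary inequality $\int_0^t f_3(t')\,e^{-2\la_3\int_0^{t'}f_3}\,dt'\leq(2\la_3)^{-1}$, the first factor is $\lesssim d_\ell 2^{-\f\ell2}\e^{1-\th}\la_3^{-\f12}$, the second is $\lesssim d_\ell^{1-\th}2^{-\f{\ell(1-\th)}2}\|a\|_{\wt{L}^2_{t,f_3}(B^{0,\f12})}^{1-\th}$, and the third is $\lesssim d_\ell^{\th}2^{-\f{\ell\th}2}\|\p_3a\|_{\wt{L}^2_t(B^{0,\f12})}^{\th}$; multiplying them out gives exactly $d_\ell^2\,2^{-\ell}\,\e^{1-\th}\,\la_3^{-\f12}\,\|a\|_{\wt{L}^2_{t,f_3}(B^{0,\f12})}^{1-\th}\|\p_3a\|_{\wt{L}^2_t(B^{0,\f12})}^{\th}$, which is the claim. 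I expect the main obstacle to be the high--vertical--frequency estimate of $p^\h$: one must split the two copies of $\buh$ between $\cB^{\f12,\f12-\th}$ and $\cB^{\f12,\f12}$ (via $\p_z$) in the one way that makes the vertical--frequency exponent negative, closes all horizontal bounds with $a$ kept in $L^2_\h$, and, after the dilation $2^n\mapsto 2^\ell/\e$, delivers precisely the power $\e^{1-\th}$ to be matched against $f_3$ — all while keeping the generic sequence $(d_\ell)_{\ell\in\Z}$ summable through the nested sums.
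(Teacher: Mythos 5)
Your proposal is correct and follows essentially the same route as the paper: a vertical Bony decomposition of the quadratic term, the asymmetric placement of one factor as $\p_z\buh\in\cB^{\f12,\f12}$ (in $L^\infty_\v(\dH^{\f12}_\h)$) against the other as $\buh\in\cB^{\f12,\f12-\th}$, the reverse Bernstein interpolation $\|\D_\ell^\v a\|^{\th}\lesssim 2^{-\ell\th}\|\D_\ell^\v\p_3a\|^{\th}$, and H\"older in time with the weight $f_3$ and the exponential to produce $\la_3^{-\f12}$. The only difference is cosmetic: the paper applies $\p_3$ to $[p^\h]_\e$ first (so the chain rule yields the explicit $\e$ and the product rule puts $\p_z$ on one copy of $\buh$ before rescaling), whereas you keep $\p_3$ as a Bernstein factor $2^\ell$, rescale to a high-vertical-frequency estimate of $p^\h$ itself, and recover $\p_z\buh$ inside the product estimate — the two bookkeepings are equivalent and yield the same $\e^{1-\th}$.
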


We shall postpone the proof of the above lemmas in the Appendix.

Let us admit the above Lemmas for the time being and continue to handle the terms in the second line of \eqref{S4eq3}.
We first get, by using integration by parts, that
\beno
\begin{split}
&\int_0^t\bigl(\D_\ell^\v(R\cdot\na R_\la) | \D_\ell^\v R_\la\bigr)_{L^2}\,dt'=-\int_0^t\bigl(\D_\ell^\v(R \otimes R_\la) | \D_\ell^\v \na R_\la\bigr)_{L^2}\,dt',\\
&\int_0^t\bigl(\D_\ell^\v(v_L\cdot\na v_L)_\la | \D_\ell^\v R_\la\bigr)_{L^2}\,dt'=\int_0^t\bigl(\D_\ell^\v(v_L\otimes v_L)_\la | \D_\ell^\v \na R_\la\bigr)_{L^2}\,dt'.
\end{split}
\eeno
Then applying the law of product, Lemma \ref{S3lem0}, gives
\beno
\begin{split}
\int_0^t\bigl|\bigl(\D_\ell^\v(R\cdot\na R_\la) | \D_\ell^\v R_\la\bigr)_{L^2}\bigr|\,dt'\leq&
\|\D_\ell^\v(R \otimes R_\la)\|_{L^{2}_t(L^2)}\|\D_\ell^\v \na R_\la\|_{L^2_t(L^2)}\\
\lesssim & d_\ell^2 2^{-\ell}\|R \otimes R_\la\|_{\wt{L}^2_t(B^{0,\f12})}\|\na R_\la\|_{\wt{L}^2_t(B^{0,\f12})}\\
\lesssim
&d_\ell^2 2^{-{\ell}}\|R\|_{\wt{L}^\infty_t(B^{0,\f12})}\|\na_\h R_\la\|_{\wt{L}^2_t(B^{0,\f12})}
 \|\na R_\la\|_{\wt{L}^2_t(B^{0,\f12})},
\end{split}
\eeno
and
\beno
\begin{split}
\int_0^t\bigl|\bigl(\D_\ell^\v(v_L\cdot\na v_L) | \D_\ell^\v R_\la\bigr)_{L^2}\bigr|\,dt'
\lesssim & d_\ell^22^{-\ell} \|v_L\|_{\wt{L}^\infty_t(B^{0,\f12})}\|\na_\h v_L\|_{\wt{L}^2_t(B^{0,\f12})}\|\na R_\la\|_{\wt{L}^2_t(B^{0,\f12})}.
\end{split}
\eeno
Along the same line, by using integrating by parts, one has
\beno
\begin{split}
&\int_0^t\bigl(\D_\ell^\v(v_L\cdot\na R_\la+R_\la\cdot\na v_L) | \D_\ell^\v R_\la\bigr)_{L^2}\,dt'\\
&=-\int_0^t\bigl(\D_\ell^\v(v_L\otimes R_\la+R_\la\otimes v_L) | \D_\ell^\v \na R_\la\bigr)_{L^2}\,dt'.
\end{split}
\eeno
It follows from the law of product in anisotropic Besov space, Lemma \ref{S3lem0}, that
\beno
\begin{split}
\|v_L\otimes R_\la\|_{\wt{L}^2_t(B^{0,\f12})}
\lesssim &\|v_L\|_{\wt{L}^\infty_t(B^{0,\f12})}^{\f12}\|\na_\h v_L\|_{\wt{L}^2_t(B^{0,\f12})}^{\f12}
\|R_\la\|_{\wt{L}^\infty_t(B^{0,\f12})}^{\f12}\|\na_\h R_\la\|_{\wt{L}^2_t(B^{0,\f12})}^{\f12},
\end{split}
\eeno
which implies
\beno
\begin{split}
&\int_0^t\bigl|\bigl(\D_\ell^\v(v_L\cdot\na R_\la+R_\la\cdot\na v_L) | \D_\ell^\v R_\la\bigr)_{L^2}\bigr|\,dt'\\
&\lesssim d_\ell^22^{-\ell} \|v_L\|_{\wt{L}^\infty_t(B^{0,\f12})}^{\f12}\|\na_\h v_L\|_{\wt{L}^2_t(B^{0,\f12})}^{\f12}
\|R_\la\|_{\wt{L}^\infty_t(B^{0,\f12})}^{\f12}\|\na_\h R_\la\|_{\wt{L}^2_t(B^{0,\f12})}^{\f12}\|\na R_\la\|_{\wt{L}^2_t(B^{0,\f12})}.
\end{split}
\eeno

Whereas we get, by using integrating by parts, that
\beno
\begin{split}
&\int_0^t\bigl(\D_\ell^\v([\buh]_\e\cdot\na_\h R_\la+R_\la\cdot\na [\buh]_\e) | \D_\ell^\v R_\la\bigr)_{L^2}\,dt'\\
&=-\int_0^t\bigl(\D_\ell^\v([\buh]_\e\otimes R_\la) | \D_\ell^\v \na_\h R_\la\bigr)_{L^2}\,dt'
-\int_0^t\bigl(\D_\ell^\v(R_\la \otimes [\buh]_\e) | \D_\ell^\v \na R_\la\bigr)_{L^2}\,dt'.
\end{split}
\eeno
Applying Lemma \ref{S4lem2} leads to
\beno
\begin{split}
\int_0^t\bigl|\bigl(\D_\ell^\v([\buh]_\e\cdot\na_\h R_\la&+R_\la\cdot\na [\buh]_\e) | \D_\ell^\v R_\la\bigr)_{L^2}\bigr|\,dt'\\
&\lesssim d_\ell^2 2^{-{\ell}}\|R\|_{\wt{L}^2_{t,f_1}(B^{0,\f12})}^{\f12}
 \|\na_\h R_\la\|_{\wt{L}^2_t(B^{0,\f12})}^{\f12}\|\na R_\la\|_{\wt{L}^2_t(B^{0,\f12})}.
\end{split}
\eeno
Applying Lemma \ref{S4lem3} gives
\beno
\int_0^t\bigl|\bigl(\D_\ell^\v([\buh]_\e\cdot\na_\h v_L) | \D_\ell^\v  R_\la\bigr)_{L^2}\bigr|\,dt'\lesssim d_\ell^2 2^{-{\ell}}\|\na_\h v_L\|_{\wt{L}^2_T(B^{0,\f12})}
\|R_\la\|_{\wt{L}^2_{t,f_1}(B^{0,\f12})}^{\f12}\|\na_\h R_\la\|_{\wt{L}^2_{t}(B^{0,\f12})}^{\f12}.
\eeno
Applying Lemma \ref{S4lem5} yields
\beno
\begin{split}
\int_0^t\bigl|\bigl(\D_\ell^\v(v_L\cdot[\na_\e \buh]_\e)_\la | \D_\ell^\v & R_\la\bigr)_{L^2}\bigr|\,dt'\lesssim  d_\ell^2 2^{-{\ell}}\la_2^{-\f14}\|v_L\|_{\wt{L}^\infty_t(B^{0,\f12})}^{\f12}
 \\
&\times \|\na_\h v_L\|_{\wt{L}^2_t(B^{0,\f12})}^{\f12}\| R_\la\|_{\wt{L}^2_{t,f_2}(B^{0,\f12})}^{\f12}
\| \na_\h R_\la\|_{\wt{L}^2_{t}(B^{0,\f12})}^{\f12}. \end{split}
\eeno
Finally for any $\th\in [0,\frac12[,$ we get, by applying Lemma \ref{S4lem6}, that
\beno
\begin{split}
\int_0^t&\bigl|\bigl(\D_\ell^\v(\p_3[p^\h]_\e)_\la | \D_\ell^\v R^3\bigr)_{L^2}\bigr|\,dt'\lesssim
 d_\ell^22^{-\ell}\e^{1-\th}\la_3^{-\f12}\|R^3\|_{\wt{L}^2_{t,f_3}(B^{0,\f12})}^{1-\th}\|\p_3R^3\|_{\wt{L}^2_t(B^{0,\f12})}^\th.
\end{split}
\eeno
Let us denote
\beno
\cA_t\eqdefa \|v_L\|_{\wt{L}^\infty_t(B^{0,\f12})}
  \|\na_\h v_L\|_{\wt{L}^2_t(B^{0,\f12})}. \eeno
  Then it follows from Proposition \ref{S3prop1} that
\beq \label{S5eq2}
\cA_t\lesssim \left(\nu_\h\nu_\v\right)^{-\f14}\cA_0\with \cA_0\eqdefa\|v_0\|_{\cB^{\f12,0}}\bigl(\|v_0\|_{B^{0,\f12}}+\|v_0\|_{\cB^{\f12,0}}\bigr).
\eeq
By integrating \eqref{S4eq3} over $[0,t]$ and inserting the above estimates into the resulting inequality, and then  we take the square root of
resulting inequality to achieve
\beno
\begin{split}
\|\D_\ell^\v R_\la\|_{L^\infty_t(L^2)}&+\sum_{i=1}^3\la_i^{\f12}\Bigl( \int_0^tf_i(t')\|\D_\ell^\v R_\la(t')\|_{L^2}^2\,dt'\Bigr)^{\f12}\\
 &\qquad\qquad\ \ +\nu_\h^{\f12}\|\na_\h\D_\ell^\v R_\la\|_{L^2_t(L^2)}
+\nu_{\rm v}^{\f12}\|\partial_3\D_\ell^\v R_\la\|_{L^2_t(L^2)}\\
\lesssim  d_{\ell}2^{-\f{\ell}2}\biggl(&\|\na R_\la\|_{\wt{L}^2_t(B^{0,\f12})}^{\f12}\Bigl[\cA^{\f12}_t+\|R\|_{\wt{L}^\infty_t(B^{0,\f12})}^{\f12}\|\na_\h R_\la\|_{\wt{L}^2_t(B^{0,\f12})}^{\f12}\\
&\quad +\bigl(\|R_\la\|_{\wt{L}^2_{t,f_1}(B^{0,\f12})}^{\f14}+\cA^{\f14}_t\|R\|_{\wt{L}^\infty_t(B^{0,\f12})}^{\f14}
 \bigr)
 \|\na_\h R_\la\|_{\wt{L}^2_t(B^{0,\f12})}^{\f14}\Bigr]\\
 &+\Bigl[\la_2^{-\f18} \cA^{\f14}_t\| R_\la\|_{\wt{L}^2_{t,f_2}(B^{0,\f12})}^{\f14}
 +\|\na_\h v_L\|_{\wt{L}^2_T(B^{0,\f12})}^{\f12}
\|R_\la\|_{\wt{L}^2_{t,f_1}(B^{0,\f12})}^{\f14}\Bigr]\\
&\quad\times \|\na_\h R_\la\|_{\wt{L}^2_{t}(B^{0,\f12})}^{\f14}+\e^{\f{1-\th}2} \la_3^{-\f14}\|R^3\|_{\wt{L}^2_{t,f_3}(B^{0,\f12})}^{\f{1-\th}2}\|\p_3R^3\|_{\wt{L}^2_t(B^{0,\f12})}^{\f{\th}2}\biggr).
\end{split}
\eeno
Multiplying $2^{\f{\ell}2}$ to the above inequality and summing  up the resulting inequality for $\ell\in\Z$ leads to
\beq\label{S5eq1}
\begin{split}
\|R_\la&\|_{\wt{L}^\infty_t(B^{0,\f12})}+\sum_{i=1}^3{\la_i}^{\f12}\|R_\la\|_{\wt{L}^2_{t,f_i}(B^{0,\f12})}+\nu_\h^{\f12}\|\na_\h R_\la\|_{\wt{L}^2_t(B^{0,\f12})}
+\nu_\v^{\f12}\|\p_3 R_\la\|_{\wt{L}^2_t(B^{0,\f12})}\\
\leq & C\biggl(\|\na R_\la\|_{\wt{L}^2_t(B^{0,\f12})}^{\f12}\Bigl[\cA^{\f12}_t+\|R\|_{\wt{L}^\infty_t(B^{0,\f12})}^{\f12}\|\na_\h R_\la\|_{\wt{L}^2_t(B^{0,\f12})}^{\f12}\\
 &\qquad+\bigl(\cA^{\f14}_t\|R\|_{\wt{L}^\infty_t(B^{0,\f12})}^{\f14}+\|R_\la\|_{\wt{L}^2_{t,f_1}(B^{0,\f12})}^{\f14}\bigr)
 \|\na_\h R_\la\|_{\wt{L}^2_t(B^{0,\f12})}^{\f14}\Bigr]\\
 &\quad+\Bigl[\la_2^{-\f18} \cA^{\f14}_t\| R_\la\|_{\wt{L}^2_{t,f_2}(B^{0,\f12})}^{\f14}
 +\|\na_\h v_L\|_{\wt{L}^2_T(B^{0,\f12})}^{\f12}
\|R_\la\|_{\wt{L}^2_{t,f_1}(B^{0,\f12})}^{\f14}\Bigr]\\
&\qquad\times \|\na_\h R_\la\|_{\wt{L}^2_{t}(B^{0,\f12})}^{\f14}+\e^{\f{1-\th}2} \la_3^{-\f14}\|R^3\|_{\wt{L}^2_{t,f_3}(B^{0,\f12})}^{\f{1-\th}2}\|\p_3R^3\|_{\wt{L}^2_t(B^{0,\f12})}^{\f{\th}2}\biggr).
\end{split}
\eeq
Let us assume that $\nu_\v\geq \nu_\h.$ Then by applying Young's inequality, we obtain
\beno
\begin{split}
&C\cA^{\f12}_t\|\na R_\la\|_{\wt{L}^2_t(B^{0,\f12})}^{\f12}\leq C\nu_\h^{-\f12}\cA_t+\f{\nu_\h^{\f12}}{100}\|\na R_\la\|_{\wt{L}^2_t(B^{0,\f12})};\\
 &C\cA^{\f14}_t\|R\|_{\wt{L}^\infty_{t}(B^{0,\f12})}^{\f14}
 \|\na R_\la\|_{\wt{L}^2_t(B^{0,\f12})}^{\f34}\leq C\cA_t \nu_\h^{-\f32}\|R\|_{\wt{L}^\infty_{t}(B^{0,\f12})}
 +\f{\nu_\h^{\f12}}{100}\|\na R_\la\|_{\wt{L}^2_t(B^{0,\f12})};\\
&C\|R\|_{\wt{L}^2_{t,f_1}(B^{0,\f12})}^{\f14}
 \|\na R_\la\|_{\wt{L}^2_t(B^{0,\f12})}^{\f34}\leq \f{C}2\nu_\h^{-\f32}\|R\|_{\wt{L}^2_{t,f_1}(B^{0,\f12})}
 +\f{\nu_\h^{\f12}}{100}\|\na R_\la\|_{\wt{L}^2_t(B^{0,\f12})};\\
\end{split}
 \eeno
 and
 \beno
 \begin{split}
C\la_2^{-\f18} \cA^{\f14}_t&\| R_\la\|_{\wt{L}^2_{t,f_2}(B^{0,\f12})}^{\f14}
  \|\na_\h R_\la\|_{\wt{L}^2_{t}(B^{0,\f12})}^{\f14}\\
  &\leq  C\cA^{\f12}_t\la_2^{-\f14}
  +C\nu_\h^{-\f12}\| R_\la\|_{\wt{L}^2_{t,f_2}(B^{0,\f12})}+
\f{\nu_\h^{\f12}}{100}\|\na R_\la\|_{\wt{L}^2_t(B^{0,\f12})};
\end{split}
 \eeno
 and
 \beno
 \begin{split}
 \|\na_\h v_L&\|_{\wt{L}^2_T(B^{0,\f12})}^{\f12}
\|R_\la\|_{\wt{L}^2_{t,f_1}(B^{0,\f12})}^{\f14}\|\na_\h R_\la\|_{\wt{L}^2_{t}(B^{0,\f12})}^{\f14}\\
&\leq C\nu_\h^{\f12}\|\na_\h v_L\|_{\wt{L}^2_T(B^{0,\f12})}
+\f{C}2\nu_\h^{-\f32}\| R_\la\|_{\wt{L}^2_{t,f_1}(B^{0,\f12})}+
\f{\nu_\h^{\f12}}{100}\|\na R_\la\|_{\wt{L}^2_t(B^{0,\f12})};
\end{split}
 \eeno
 and
 \beno
 \begin{split}
 C\e^{\f{1-\th}2} \la_3^{-\f14}&\|R^3\|_{\wt{L}^2_{t,f_3}(\cB^{0,\f12})}^{\f{1-\th}2}\|\p_3R^3\|_{\wt{L}^2_t(\cB^{0,\f12})}^{\f{\th}2}\\
 \leq&
 C\e^{1-\th}\nu_\h^{-\f{1-\th}2}\nu_\v^{-\f{\th}2}\la_3^{-\f12}+\nu_\h^{\f12}\|R^3\|_{\wt{L}^2_{t,f_3}(\cB^{0,\f12})}+\f{\nu_\v^{\f12}}{100}\|\p_3 R_\la\|_{\wt{L}^2_t(B^{0,\f12})}.
 \end{split}
 \eeno
 Inserting the above estimates into \eqref{S5eq1} leads to
 \beq \label{S5eq3q}
\begin{split}
\|R_\la&\|_{\wt{L}^\infty_t(B^{0,\f12})}+\sum_{i=1}^3{\la_i}^{\f12}\|R\|_{\wt{L}^2_{t,f_i}(B^{0,\f12})}+\nu_\h^{\f12}\|\na_\h R_\la\|_{\wt{L}^2_t(B^{0,\f12})}
+\nu_\v^{\f12}\|\p_3 R_\la\|_{\wt{L}^2_t(B^{0,\f12})}\\
\leq & C\Bigl(\cA_t\nu_\h^{-\f12}+\cA^{\f12}_t\la_2^{-\f14}+\nu_\h^{\f12}\|\na_\h v_L\|_{\wt{L}^2_T(B^{0,\f12})}+\e^{1-\th}\nu_\h^{-\f{1-\th}2}\nu_\v^{-\f{\th}2}\la_3^{-\f12}\Bigr)\\
&+C\cA_t\nu_\h^{-\f32}\|R\|_{\wt{L}^\infty_t(B^{0,\f12})}+
\bigl(\f{\nu_h^{\f12}}4+C\|R\|_{\wt{L}^\infty_t(B^{0,\f12})}^{\f12}\bigr)\|\na R_\la\|_{\wt{L}^2_t(B^{0,\f12})}\\
&+C\bigl(\nu_\h^{-\f32}\|R\|_{\wt{L}^2_{t,f_1}(B^{0,\f12})}+\nu_\h^{-\f12}\|R\|_{\wt{L}^2_{t,f_2}(B^{0,\f12})}\bigr)+\nu_\h^{\f12}\|R^3\|_{\wt{L}^2_{t,f_3}(B^{0,\f12})}
\end{split}
\eeq

In view of \eqref{S5eq3q}, we deduce from standard theory of Navier-Stokes system that \eqref{S2eq4} has a unique solution
$R\in C([0,T^\ast[; B^{0,\frac12})$ with $\na R\in L^2(]0,T^\ast[; B^{0,\f12})$ for some maximal existing time $T^\ast$. We
are going to prove that $T^\ast=\infty$  under the smallness
condition \eqref{S1eq1}.  Otherwise, we
denote
\beq \label{S5eq3}
T^\star\eqdefa \sup\bigl\{\ T<T^\ast:\ \|R\|_{\wt{L}^\infty_t(B^{0,\f12})}\leq \f{\nu_\h}{16C^2}\ \ \bigr\},
\eeq
and we take
\beq \label{S5eq3a}
\la_1=C^2\nu_\h^{-3}, \quad \la_2=C^2\nu_\h^{-1} \andf \la_3=\nu_\h. \eeq
Then for $t\leq T^\star,$ we deduce from \eqref{S5eq2}, \eqref{S5eq3q} and  \eqref{S5eq3} that
\beno
\begin{split}
\|R_\la\|_{\wt{L}^\infty_t(B^{0,\f12})}+&\nu_\h^{\f12}\|\na_\h R_\la\|_{\wt{L}^2_t(B^{0,\f12})}
+\nu_\v^{\f12}\|\p_3 R_\la\|_{\wt{L}^2_t(B^{0,\f12})}\\
&\qquad\qquad\leq  C\Bigl(\cA_0^{\f12}\bigl(1+\cA_0^{\f12}\bigr)\bigl(\nu_\h^{-\f78}+\nu_\h^{\f18}\bigr)\nu_\v^{-\f18}+\e^{1-\th}\nu_\h^{-1+\f\th2}\nu_\v^{-\f{\th}2}\Bigr),
\end{split}
\eeno
which together with \eqref{S4eq1} implies that
\beno
\begin{split}
&\|R\|_{\wt{L}^\infty_t(B^{0,\f12})}+\nu_\h^{\f12}\|\na_\h R\|_{\wt{L}^2_t(B^{0,\f12})}
+\nu_\v^{\f12}\|\p_3 R\|_{\wt{L}^2_t(B^{0,\f12})}\\
&\qquad\leq  C\Bigl(\cA_0^{\f12}\bigl(1+\cA_0^{\f12}\bigr)\bigl(\nu_\h^{-\f78}+\nu_\h^{\f18}\bigr)\nu_\v^{-\f18}+\e^{1-\th}\nu_\h^{-1+\f\th2}\nu_\v^{-\f{\th}2}\Bigr)\\
&\qquad\quad\times
\exp\Bigl(C\nu_\h^{-1}\int_0^t \bigl(1+ \nu_\h^{-2}\|\buh(t')\|_{B^{0,\f12}}^2\bigr)\|\na_\e\buh(t')\|_{B^{0,\f12}}^2\,dt'\\
&\qquad\qquad\qquad\ +C\nu_\h\int_0^t \bigl(\|\buh(t')\|_{\cB^{\f12,\f12-\th}}^{\f2{1-\th}} +\|\p_z\buh(t)\|_{\cB^{\f12,\f12}}^2\bigr)\,dt'\Bigr),
\end{split}
\eeno
from which and Proposition \ref{S3prop2} and Corollary \ref{S3col1}, we deduce that for $t\leq T^\star,$
\beq \label{S5eq4}
\begin{split}
\|R\|_{\wt{L}^\infty_t(B^{0,\f12})}+&\nu_\h^{\f12}\|\na_\h R\|_{\wt{L}^2_t(B^{0,\f12})}
+\nu_\v^{\f12}\|\p_3 R\|_{\wt{L}^2_t(B^{0,\f12})}\\
\leq  & C\Bigl(\cA_0^{\f12}\bigl(1+\cA_0^{\f12}\bigr)\bigl(\nu_\h^{-\f78}+\nu_\h^{\f18}\bigr)\nu_\v^{-\f18}+\e^{1-\th}\nu_\h^{-1+\f\th2}\nu_\v^{-\f{\th}2}\Bigr)\cA_{\d,\nu_h}(\buh_0)\with\\
\cA_{\d,\nu_h}(\buh_0)\eqdefa &
\exp\biggl(C\nu_\h^{-2}\|\buh_0\|_{B^{0,\f12}}^2\bigl(1+\nu_\h^{-2}\|\buh_0\|_{B^{0,\f12}}^2\bigr)\exp\Bigl(C\nu_\h^{-3}\|\buh_0\|_{L^\infty_\v(L^2_\h)}^2 A_{\nu_\h,\d}(\buh_0)\Bigr)\\
&\qquad+C\exp\left(C\nu_\h^{-1}A_{\nu_\h,\d}(\buh_0)\right)\Bigl(\|\buh_0\|_{\dH^{-\f12,0}}^{\f1{1-\th}}\|\buh_0\|_{\dH^{\f12,0}}^{\f{2\th}{1-\th}}\||\p_z\buh_0\|_{\dH^{-\f12,0}}^{\f{1-2\th}{1-\th}}
\\
&\qquad\quad +\|\p_z\buh_0\|_{\dH^{-\f12,0}} \bigl(\|\p_z^2\buh_0\|_{\dH^{-\f12,0}}
+\nu_\h^{-4}\|\p_z\buh_0\|_{\dH^{-\f12,0}}^2 \|\p_z\buh_0\|_{L^2}^4\bigr)\Bigr)
\biggr),
\end{split}
\eeq where $A_{\nu_\h,\d}(\buh_0)$ is given by \eqref{S3eq6}.

Then under the smallness condition \eqref{S1eq1}, we obtain
\beno
\|R\|_{\wt{L}^\infty_{T^\star}(B^{0,\f12})}+\nu_\h^{\f12}\|\na_\h R\|_{\wt{L}^2_{T^\star}(B^{0,\f12})}
+\nu_\v^{\f12}\|\p_3 R\|_{\wt{L}^2_{T^\star}(B^{0,\f12})}
\leq \f{\nu_\h}{32C^2}.
\eeno
This contradicts with \eqref{S5eq3}, which  in torn shows that $T^\ast=T^\star=\infty.$
This completes the proof of Theorem \ref{thPZ9}.
\end{proof}

\setcounter{equation}{0}

\section{The case of fluid evolving between two parallel plans }

The goal of this section is to investigate the global well-posedness of the anisotropic Navier-Stokes system with only vertical viscosity,  \eqref{S1eq4}.
To do it, let us first present the following lemmas:

\begin{lem}\label{S6lem1}
{\sl Let $\Om=\R^2\times]0,1[,$ and $f$ satisfy $f|_\Om=0$ and $\p_3f\in \cB_h^1(\Om)$ (see Definition \ref{def4}). Then one has
\beno
\|f\|_{L^\infty}\leq C\|\p_3f\|_{\cB^1_\h}.
\eeno}
\end{lem}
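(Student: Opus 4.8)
The plan is to run a horizontal Littlewood--Paley decomposition of $f$ and to exploit the Dirichlet condition $f|_{x_3=0}=0$ by writing $f$ as the vertical integral of $\p_3 f$. The key observation is that the horizontal dyadic block $\D_k^\h$ is a Fourier multiplier acting only on the horizontal frequency variable $\xi_\h$, hence it commutes with $\p_3$ and with the trace at $x_3=0$. Consequently $\D_k^\h f|_{x_3=0}=0$ and, for $(x_\h,x_3)\in\Om$,
\[
\D_k^\h f(x_\h,x_3)=\int_0^{x_3}\D_k^\h\p_3 f(x_\h,s)\,ds,\qquad\text{so}\qquad \sup_{x_3\in]0,1[}|\D_k^\h f(x_\h,x_3)|\leq\int_0^1|\D_k^\h\p_3 f(x_\h,s)|\,ds.
\]
Taking the $L^2_\h$ norm, using Minkowski's inequality and then the Cauchy--Schwarz inequality on the interval $]0,1[$, one gets
\[
\|\D_k^\h f\|_{L^2_\h(L^\infty_\v)}\leq\int_0^1\|\D_k^\h\p_3 f(\cdot,s)\|_{L^2_\h}\,ds\leq\Bigl(\int_0^1\|\D_k^\h\p_3 f(\cdot,s)\|_{L^2_\h}^2\,ds\Bigr)^{1/2}=\|\D_k^\h\p_3 f\|_{L^2(\Om)}.
\]

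Next I would use the horizontal Bernstein inequality of Lemma \ref{lemBern} (with $p_2=2$, $p_1=q_1=\infty$): since $\D_k^\h f$ has horizontal frequencies in a ball $2^k\cB_\h$, it gives $\|\D_k^\h f\|_{L^\infty(\Om)}=\|\D_k^\h f\|_{L^\infty_\h(L^\infty_\v)}\lesssim 2^{k}\|\D_k^\h f\|_{L^2_\h(L^\infty_\v)}$, so combining with the previous bound yields $\|\D_k^\h f\|_{L^\infty(\Om)}\lesssim 2^{k}\|\D_k^\h\p_3 f\|_{L^2(\Om)}$. Summing over $k\in\Z$ and recalling Definition \ref{def4},
\[
\|f\|_{L^\infty}\leq\sum_{k\in\Z}\|\D_k^\h f\|_{L^\infty}\lesssim\sum_{k\in\Z}2^{k}\|\D_k^\h\p_3 f\|_{L^2(\Om)}=\|\p_3 f\|_{\cB^1_\h},
\]
which is the desired estimate.

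There is essentially no deep obstacle here; the only points that require a word of care are the commutation of the horizontal multiplier $\D_k^\h$ with the vertical trace (immediate, since convolution in $x_\h$ does not touch $x_3$) and the convergence of the series $f=\sum_{k\in\Z}\D_k^\h f$ in $\cS_h'$, which is guaranteed once the right-hand side $\|\p_3 f\|_{\cB^1_\h}$ is finite. Both are standard facts of the Littlewood--Paley calculus recalled in Section \ref{Sect3}.
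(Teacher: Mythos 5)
Your proof is correct and follows essentially the same route as the paper's: a horizontal Littlewood--Paley decomposition, the horizontal Bernstein inequality to gain the factor $2^k$ in passing from $L^2_\h$ to $L^\infty_\h$, and the Dirichlet condition in the vertical variable to trade $f$ for $\p_3 f$, followed by summation over $k$. The only (cosmetic) difference is how the vertical variable is handled: you work in the mixed norm $L^2_\h(L^\infty_\v)$ and use the fundamental theorem of calculus plus Cauchy--Schwarz on $]0,1[$, whereas the paper works in $L^\infty_\v(L^2_\h)$ and combines the one-dimensional inequality $\|g\|_{L^\infty_\v(L^2_\h)}\lesssim\|g\|_{L^2}^{1/2}\|\p_3 g\|_{L^2}^{1/2}$ with the Poincar\'e inequality $\|g\|_{L^2(\Om)}\lesssim\|\p_3 g\|_{L^2(\Om)}$ --- both are elementary and give the same bound.
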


\begin{proof} We first get, by applying Lemma \ref{lemBern}, that
\beno
\begin{split}
\|f\|_{L^\infty}\leq &\sum_{k\in\Z}\|\D_k^\h f\|_{L^\infty}
\lesssim \sum_{k\in\Z}2^k\|\D_k^\h f\|_{L^\infty_\v(L^2_\h)}\\
\lesssim &\sum_{k\in\Z}2^k\|\D_k^\h f\|_{L^2}^{\f12}\|\D_k^\h \p_3 f\|_{L^2}^{\f12}.
\end{split}
\eeno
Recalling that Poincar'e inequality holds in the strip $\Om$ with Dirichlet boundary condition
\beq \label{S6eq1}
\|f\|_{L^2(\Omega)}\leq C\|\partial_3 f\|_{L^2(\Omega)}.\eeq
So that we obtain
\beno
\begin{split}
\|f\|_{L^\infty}
\lesssim \sum_{k\in\Z}2^k\|\D_k^\h \p_3 f\|_{L^2}\lesssim \|\p_3f\|_{\cB^1_\h}.
\end{split}
\eeno
This completes the proof of the lemma.
\end{proof}

\begin{lem}\label{S6lem2}
{\sl Let $u=(u^\h, u^3)$ be a smooth solenoidal vector field on $\Om.$ Then one has
\beno
\int_0^t\mid\bigl(\D_k^\h(u\cdot\na u) | \D_k^\h u\bigr)_{L^2(\Om)}\mid\,dt'\lesssim
d_k^22^{-4k}\|u\|_{\wt{L}^\infty_t(\cB^2_\h)}\bigl(\|\p_3u\|_{L^2_t(L^2)}^2+\|\p_3 u\|_{\wt{L}^2_t(\cB^2_\h)}^2\bigr).
\eeno}
\end{lem}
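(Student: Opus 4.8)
The plan is to treat $\bigl(\D_k^\h(u\cdot\na u) | \D_k^\h u\bigr)_{L^2(\Om)}$ by the Littlewood--Paley energy method adapted to the anisotropic space $\cB_\h^2(\Om)$, the extra ingredients being the Poincar\'e inequality in $x_3$ and Lemma~\ref{S6lem1}. First, since $\dive u=0$ and $u|_{\p\Om}=0$, one integration by parts removes the worst part:
$$\bigl(u\cdot\na\D_k^\h u | \D_k^\h u\bigr)_{L^2}=-\f12\int_\Om(\dive u)\,|\D_k^\h u|^2\,dx+\f12\int_{\p\Om}(u\cdot n)\,|\D_k^\h u|^2\,dS=0,$$
whence $\bigl(\D_k^\h(u\cdot\na u) | \D_k^\h u\bigr)_{L^2}=\bigl([\D_k^\h;u^\h\cdot\na_\h]u+[\D_k^\h;u^3\p_3]u | \D_k^\h u\bigr)_{L^2}$. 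I would then expand the advecting field of each commutator by the horizontal Bony decomposition, $u^\h\cdot\na_\h u=\sum_i\bigl(T^\h_{u_i}\p_i u+T^\h_{\p_i u}u_i+R^\h(u_i,\p_i u)\bigr)$, and similarly for $u^3\p_3 u$.

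In the leading paraproduct $T^\h_{u_i}\p_i u$ of the horizontal commutator, the only dangerous piece is $S^\h_{k-1}u^\h\cdot\na_\h\D_k^\h u$ tested against $\D_k^\h u$; integrating by parts in $x_\h$ it becomes $\f12\int(\dive_\h S^\h_{k-1}u^\h)|\D_k^\h u|^2=-\f12\int(S^\h_{k-1}\p_3u^3)|\D_k^\h u|^2$, using $\dive_\h u^\h=-\p_3u^3$. This is the crux: the horizontal derivative that would otherwise produce an uncontrolled factor $2^k$ is traded, through the divergence--free condition, for a vertical derivative of $u^3$, which lives in $\cB_\h^2$. The genuine commutators $[\D_k^\h;S^\h_{k'-1}u_i]\D^\h_{k'}\p_i u$ are bounded in $L^2$ by $2^{-k}\|\na_\h S^\h_{k'-1}u\|_{L^\infty}\|\D^\h_{k'}\p_i u\|_{L^2}$, the gain $2^{-k}$ cancelling the loss $2^{k'}\sim 2^k$ from $\p_i$. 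Every $L^\infty_x$ norm of a low--frequency factor ($S^\h_{k-1}u$, $S^\h_{k-1}\na_\h u$, $S^\h_{k-1}\p_3u^3$) is estimated by Lemma~\ref{S6lem1}, which costs exactly one $\p_3$; combined with $\|g\|_{\cB_\h^1}\lesssim\|g\|_{L^2}+\|g\|_{\cB_\h^2}$ (the low horizontal frequencies being absorbed by $L^2$), this confines those factors to the norms $\|\p_3u\|_{L^2_t(L^2)}+\|\p_3u\|_{\widetilde L^2_t(\cB_\h^2)}$ --- which is precisely why both appear on the right. The remaining factors, the high--frequency $\D^\h_{k'}u$ and the test function $\D_k^\h u$, are given the full decay $2^{-2k'}d_{k'}$ times the relevant norm, either directly from $\cB_\h^2$ or, after $\|\D_k^\h u\|_{L^2}\lesssim\|\D_k^\h\p_3u\|_{L^2}$ (Poincar\'e in $x_3$, valid since $\D_k^\h u|_{\p\Om}=0$), from $\widetilde L^2_t(\cB_\h^2)$; one also uses $\|\D_k^\h u\|_{L^4_t(L^2)}\lesssim\|\D_k^\h u\|_{L^\infty_t(L^2)}^{1/2}\|\D_k^\h u\|_{L^2_t(L^2)}^{1/2}\lesssim d_k2^{-2k}\|u\|_{\widetilde L^\infty_t(\cB_\h^2)}^{1/2}\|\p_3u\|_{\widetilde L^2_t(\cB_\h^2)}^{1/2}$ to balance the time H\"older exponents. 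Summing, each term is bounded by $d_k^2 2^{-4k}\|u\|_{\widetilde L^\infty_t(\cB_\h^2)}\bigl(\|\p_3u\|_{L^2_t(L^2)}^2+\|\p_3u\|_{\widetilde L^2_t(\cB_\h^2)}^2\bigr)$.

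The vertical commutator $[\D_k^\h;u^3\p_3]u$ is easier, since $\p_3$ carries no horizontal Bernstein loss; where $u^3$ occurs at low frequency it is put in $L^\infty_x$ by Lemma~\ref{S6lem1} or, more economically, by $\|u^3\|_{L^\infty}\lesssim\|\dive_\h u^\h\|_{\cB_\h^1}\lesssim\|u^\h\|_{\cB_\h^2}$, and where it occurs at high frequency one must not bound $\|\D^\h_{k'}u^3\|_{L^2}$ by $2^{k'}\|\D^\h_{k'}u^\h\|_{L^2}$ (which would throw away a derivative) but keep it in $\cB_\h^2$. The remainder terms $R^\h$ are handled in the same spirit --- after transferring the $\na_\h$ off $\D_k^\h u$ and invoking $\dive_\h u^\h=-\p_3u^3$ --- now with a further Bernstein factor $2^k$ from $\D_k^\h$ acting on a product of functions of frequency $\gtrsim 2^k$, the sum $\sum_{k'\ge k-N_0}$ converging because of the negative powers of $2^{k'}$ it generates.

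The main obstacle, and where most of the labour lies, is the bookkeeping: for every term of the Bony decomposition one must control (i) which factor is taken in $L^\infty_x$ --- it must then be charged to the $\p_3u$ norms and never to $\|u\|_{\widetilde L^\infty_t(\cB_\h^2)}$, since Lemma~\ref{S6lem1} always spends a $\p_3$; (ii) that each remaining factor really earns the full $2^{-2k}$ decay (through $\cB_\h^2$ or through Poincar\'e), and in particular that any surviving $\na_\h$, which would create a $2^k$, is always annihilated either by the divergence--free substitution $\dive_\h u^\h=-\p_3u^3$ or by a commutator gain; and (iii) that the three resulting time factors can be split as $L^\infty_tL^2_tL^2_t$ or $L^2_tL^4_tL^4_t$, which is exactly what forces the $L^4_t$--interpolation above. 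The remainder terms, with their extra summation and with only $L^2_t$ (not $L^\infty_t$) control on $\p_3 u$, are the most delicate point.
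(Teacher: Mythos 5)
Your argument is essentially the paper's own proof: split the trilinear term into horizontal and vertical advection, apply the horizontal Bony decomposition, gain $2^{-k}$ from the genuine commutators, trade $\dive_\h S^\h_{k-1}u^\h$ for $-S^\h_{k-1}\p_3u^3$ after integrating by parts in $x_\h$, and control every low-frequency $L^\infty$ factor by Lemma~\ref{S6lem1} and every spare $\D_k^\h u$ by the Poincar\'e inequality \eqref{S6eq1}, which is exactly how both $\|\p_3u\|_{L^2_t(L^2)}$ and $\|\p_3u\|_{\wt L^2_t(\cB^2_\h)}$ enter. The only cosmetic difference is that you cancel the full transport term $\bigl(u\cdot\na\D_k^\h u\,|\,\D_k^\h u\bigr)$ globally before decomposing, whereas the paper keeps $S^\h_{k-1}u^\h\cdot\na_\h\D_k^\h u$ and integrates it by parts directly; the estimates are identical.
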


\begin{proof} To estimate the trilinear term $\bigl(\D_k^\h(u\cdot\na u) | \D_k^\h u\bigr)_{L^2(\Om)},$
 we have to distinguish  the terms containing the horizontal derivatives with the term containing the vertical derivative. We write
\beq\label{S6eq2}
\begin{split}
\bigl(\D_k^\h(u\cdot\na u) &| \D_k^\h u\bigr)_{L^2}=I_\h+I_\v\with\\
I_\h\eqdefa \bigl(\Delta_k^\h(u^\h\cdot\nabla_\h u) | &\Delta_k u\bigr)_{L^2}\quad\text{and}\quad I_\v\eqdefa \bigl(\Delta_k^h(u^3\partial_3 u) | \Delta_k^\h u\bigr).
\end{split}
\eeq
We start with the estimate of $I_\h$. Applying Bony's decomposition \eqref{pd} to $u^\h\cdot\nabla_\h u$ in the horizontal variables gives
\beno
u^\h\cdot\nabla_\h u=T^\h_{u^\h}\nabla_\h u+\bar{R}^\h(u^\h,\nabla_\h u).
\eeno
Through a commutative argument, we find
\beno
\begin{split}
\int_0^t\bigl(\D_k^\h(T^\h_{u^\h}\nabla_\h u) | \D_k^\h u\bigr)_{L^2}\,dt'=&\sum_{|k'-k|\leq 4}\Bigl(\int_0^t\bigl([\D_k^\h; S_{k'-1}^\h u^\h]\cdot\na_\h\D_{k'}^\h u | \D_k^\h u\bigr)_{L^2}\,dt'\\
&\qquad+\int_0^t\bigl((S_{k'-1}^\h u^\h-S_{k-1}^\h u^\h)\cdot\na_\h\D_{k'}^\h\D_k^\h u | \D_k^\h u\bigr)_{L^2}\,dt'\Bigr)\\
&+\int_0^t\bigl(S_{k-1}^\h u^\h\cdot\na_\h\D_k^\h u | \D_k^\h u\bigr)_{L^2}\,dt'\\
\eqdefa & I_\h^1(t)+I_\h^2(t)+I_\h^3(t).
\end{split}
\eeno
It follows from standard commutator's estimate (see \cite{bcd}) and Lemma \ref{lemBern} that
\beno
\begin{split}
\|[\Delta_k^h; S_{k'-1}^\h u^\h]\nabla_\h \Delta_{k'}^\h u^\h\|_{L^2}\leq& C2^{-k}\|\nabla_\h S_{k'-1}^\h u^\h\|_{L^\infty}\|\Delta_{k'}^\h \nabla_\h u^h\|_{L^2}\\
\leq & C\|\nabla_\h u\|_{L^\infty}\|\Delta_{k'}^\h u\|_{L^2},
\end{split}
\eeno
so that we deduce from  Lemma \ref{S6lem1} that
\beno
\begin{split}
\mid I_\h^1(t)\mid \lesssim &\sum_{|k'-k|\leq 4}\|\na_\h u\|_{L^2_t(L^\infty)}\|\D_{k'}^\h u\|_{L^\infty_t(L^2)}\|\D_{k'}^\h \p_3u\|_{L^2_t(L^2)}\\
\lesssim &d_k^22^{-4k}\|u\|_{\wt{L}^\infty_t(\cB^2_\h)}\|\p_3 u\|_{\wt{L}^2_t(\cB^2_\h)}^2.
\end{split}
\eeno
The same estimate holds for $I^2_\h(t).$

To handle $I_\h^3(t),$ we  first get,  by using integrating by parts and $\dive u=0,$ that
\beno
\begin{split}
I^3_\h(t)=&-\f12\int_0^t \bigl(S_{k-1} \dive_\h u^\h \Delta_k^\h u | \Delta_k^\h u\bigr)_{L^2}\,dt'\\
=&\frac{1}{2}\int_0^t \bigl(S_{k-1} \p_3u^3 \Delta_k^\h u | \Delta_k^\h u\bigr)_{L^2}\,dt',
\end{split}
\eeno
from which, we infer
\beno
\begin{split}
\mid I^3_\h(t)\mid\lesssim & \int_0^t \|S_{k-1}^\h\partial_3 u^3\|_{L^2_\v(L^\infty_\h)}\|\Delta_k^\h u\|_{L^4_\v(L^2_\h)}^2\,dt'\\
\lesssim & \int_0^t\|\partial_3 u^3\|_{\cB^1_\h}\|\Delta_k^\h u\|_{L^2}^{\frac 32}\| \Delta_k^\h\partial_3 u\|_{L^2}^{\frac 12}\,dt'.
\end{split}
\eeno
Applying \eqref{S6eq1} gives
\beno
\begin{split}
\mid I^3_\h(t)\mid
\lesssim &\int_0^t \|\p_3u\|_{L^2}^{\f12}\|\partial_3 u^3\|_{\cB^2_\h}^{\f12}\|\Delta_k^\h u\|_{L^2}\| \Delta_k^\h\partial_3 u\|_{L^2}\,dt'\\
\lesssim & \|\p_3u\|_{L^2_t(L^2)}^{\f12}\|\partial_3 u^3\|_{{L}^2_t(\cB^2_\h)}^{\f12}\|\Delta_k^\h u\|_{L^\infty_t(L^2)}\| \Delta_k^\h\partial_3 u\|_{L^2_t(L^2)}\\
\lesssim & d_k^22^{-4k}\|u\|_{\wt{L}^\infty_t(\cB^2_\h)}\|\p_3u\|_{L^2_t(L^2)}^{\f12}\|\p_3 u\|_{\wt{L}^2_t(\cB^2_\h)}^{\f32}.
\end{split}
\eeno
As a result, it comes out
\beq \label{S6eq3}
\begin{split}
\mid\int_0^t\bigl(\D_k^\h (T^\h_{u^\h}\nabla_\h u) | \D_k^\h u\bigr)_{L^2}\,dt'\mid\lesssim  d_k^22^{-4k}\|u\|_{\wt{L}^\infty_t(\cB^2_\h)}\bigl(\|\p_3u\|_{L^2_t(L^2)}^2+\|\p_3 u\|_{\wt{L}^2_t(\cB^2_\h)}^2\bigr).
\end{split}
\eeq
Whereas observing that
\beno
\begin{split}
\mid\int_0^t\bigl(\D_k^\h (\bar{R}^\h({u^\h},\nabla_\h u)) | \D_k^\h u\bigr)_{L^2}\,dt'\mid\lesssim &\sum_{k'\geq k-N_0}\int_0^t\|\D_{k'}^\h
u^\h\|_{L^2}\|S_{k'+2}^\h \na_\h u\|_{L^\infty}\|\D_k^\h u\|_{L^2}\,dt'\\
\lesssim &\sum_{k'\geq k-N_0}\|\na_\h u\|_{L^2_t(L^\infty)}\|\D_{k'}^\h u^\h\|_{L^\infty_t(L^2)}\|\D_k^\h u\|_{L^2_t(L^2)},
\end{split}
\eeno
which together with \eqref{S6eq1} and Lemma \ref{S6lem1} ensures that
\beno
\begin{split}
\mid\int_0^t\bigl(\D_k^\h (\bar{R}^\h({u^\h},\nabla_\h u)) | \D_k^\h u\bigr)_{L^2}\,dt'\mid
\lesssim & d_k2^{-2k}\sum_{k'\geq k-N_0}d_{k'}2^{-2k'}\|u\|_{\wt{L}^\infty_t(\cB^2_\h)}\|\p_3 u\|_{\wt{L}^2_t(\cB^2_\h)}^2\\
\lesssim & d_k^22^{-4k}\|u\|_{\wt{L}^\infty_t(\cB^2_\h)}\|\p_3 u\|_{\wt{L}^2_t(\cB^2_\h)}^2.
\end{split}
\eeno
Along with \eqref{S6eq3}, we conclude that
\beq \label{S6eq4}
\mid I_\h\mid \lesssim  d_k^22^{-4k}\|u\|_{\wt{L}^\infty_t(\cB^2_\h)}\bigl(\|\p_3u\|_{L^2_t(L^2)}^2+\|\p_3 u\|_{\wt{L}^2_t(\cB^2_\h)}^{2}\bigr).
\eeq

To handle $I_\v(t),$ we get, by applying Bony's decomposition \eqref{pd} to $u^3\p_3u$ in the horizontal variables, that
\beno
u^3\p_3u=T^\h_{u^3}\p_3u+\bar{R}^\h(u^3,\p_3u).
\eeno
We first observe from Lemma \ref{S6lem1} that
\beno
\begin{split}
\int_0^t\mid\bigl(\D_k^\h (T^\h_{u^3}\p_3u) | \D_k^\h u\bigr)_{L^2}\mid \,dt'\lesssim
&\sum_{|k'-k|\leq 4}\int_0^t\|S_{k'-1}^\h u^3\|_{L^\infty}\|\D_{k'}^\h\p_3u\|_{L^2}\|\D_{k}^\h u\|_{L^2}\,dt'\\
\lesssim
&\sum_{|k'-k|\leq 4}\| \p_3u^3\|_{L^2_t(\cB^1_\h)}\|\D_{k'}^\h\p_3u\|_{L^2_t(L^2)}\|\D_{k}^\h u\|_{L^\infty_t(L^2)}\\
\lesssim & d_k^22^{-4k}\|u\|_{\wt{L}^\infty_t(\cB^2_\h)}\|\p_3u\|_{L^2_t(L^2)}^{\f12}\|\p_3 u\|_{\wt{L}^2_t(\cB^2_\h)}^{\f32}.
\end{split}
\eeno
Similarly, we get, by applying \eqref{S6eq1}, that
\beno
\begin{split}
\int_0^t\mid&\bigl(\D_k^\h (R^\h({u^3},\p_3u)) | \D_k^\h u\bigr)_{L^2}\mid \,dt'\\
\lesssim
&\sum_{k'\geq k-N_0}\int_0^t\|\D_{k'}^\h u^3\|_{L^\infty_\v(L^2)}\|S_{k'+2}^\h\p_3u\|_{L^2_\v(L^\infty_\h)}\|\D_{k}^\h u\|_{L^2}\,dt'\\
\lesssim
&\sum_{k'\geq k-N_0}\int_0^t\|\p_3u\|_{\cB^1_\h}\|\D_{k'}^\h u^3\|_{L^2}^{\f12}\|\D_{k'}^\h \p_3u^3\|_{L^2}^{\f12}\|\D_{k}^\h u\|_{L^2}\,dt'\\
\lesssim
&\sum_{k'\geq k-N_0}\| \p_3u^3\|_{L^2_t(\cB^1_\h)}\|\D_{k'}^\h\p_3u\|_{L^2_t(L^2)}\|\D_{k}^\h u\|_{L^\infty_t(L^2)}\\
\lesssim & d_k^22^{-4k}\|u\|_{\wt{L}^\infty_t(\cB^2_\h)}\|\p_3u\|_{L^2_t(L^2)}^{\f12}\|\p_3 u\|_{\wt{L}^2_t(\cB^2_\h)}^{\f32}.
\end{split}
\eeno
This leads to
\beno
\mid I_\v\mid \lesssim  d_k^22^{-4k}\|u\|_{\wt{L}^\infty_t(\cB^2_\h)}\|\p_3u\|_{L^2_t(L^2)}^{\f12}\|\p_3 u\|_{\wt{L}^2_t(\cB^2_\h)}^{\f32}.
\eeno
Together with \eqref{S6eq2} and \eqref{S6eq4}, we complete the proof of Lemma \ref{S6lem2}.
\end{proof}

Let now present the proof of Theorem \ref{thm3}.

\begin{proof}[Proof of Theorem \ref{thm3}] For simplicity, we only present the {\it a priori} estimates for smooth enough solutions of \eqref{S1eq4}.
By taking $L^2$ inner product of the momentum equation of \eqref{S1eq4} with $u,$ we get
$$\frac{1}{2}\frac{d}{dt}\|u(t)\|^2_{L^2}+\nu_\v\|\partial_3 u\|^2_{L^2}= 0.$$
Integrating the above inequality over $[0,t]$ leads to
\beno
\frac12\|u(t)\|_{L^2}^2+\nu_\v\|\partial_3 u\|_{L^2_t(L^2)}^2\leq \frac12\|u_0\|_{L^2}^2,
\eeno
which implies
\beq \label{S6eq5}
\|u\|_{L^\infty(L^2)}+\nu^{\f12}_\v\|\partial_3 u\|_{L^2_t(L^2)}\leq \sqrt{2}\|u_0\|_{L^2}.
\eeq

On the other hand, by
applying the operator $\Delta_k^\h $ to the momentum equation  of \eqref{S1eq4} and performing $L^2$ inner product of the resulting
equation with $\D_k^\h u,$  we obtain
$$\frac{1}{2}\frac{d}{dt}\|\Delta_k^\h u\|_{L^2}^2+\nu_\v\|\partial_3 \Delta_k^h u\|^2_{L^2}\leq
\mid\bigl(\Delta_k^\h (u\cdot\nabla u) | \Delta_k^\h u\bigr)_{L^2}\mid.$$
Integrating the above inequality over $[0,t]$ and then applying Lemma \ref{S6lem2} yields
\beno
\begin{split}
\|\D_k^\h u\|_{L^\infty_t(L^2)}^2+&\nu_\v\|\partial_3 \Delta_k^h u\|^2_{L^2_t(L^2)}\\
\leq& \|\D_k^\h u_0\|_{L^2}^2+C
d_k^22^{-4k}\|u\|_{\wt{L}^\infty_t(\cB^2_\h)}\bigl(\|\p_3u\|_{L^2_t(L^2)}^2+\|\p_3 u\|_{\wt{L}^2_t(\cB^2_\h)}^2\bigr).
\end{split}
\eeno
By taking square root of the above inequality and multiplying the resulting inequality by $2^{2k},$ and then summing up
the resulting inequality for $k\in\Z,$ we achieve
\beq\label{S6eq6}
\|u\|_{\wt{L}^\infty_t(\cB^2_\h)}+\nu_\v^{\f12}\|\p_3 u\|_{\wt{L}^2_t(\cB^2_\h)}
\leq C\Bigl(\|u_0\|_{\cB_\h^2}+\|u\|_{\wt{L}^\infty_t(\cB^2_\h)}^{\f12}\bigl(\|\p_3u\|_{L^2_t(L^2)}+\|\p_3 u\|_{\wt{L}^2_t(\cB^2_\h)}\bigr)\Bigr).
\eeq
Summing up \eqref{S6eq5} with \eqref{S6eq6} gives rise to
\beq \label{S6eq7}
\begin{split}
\|u\|_{L^\infty(L^2)}+&\|u\|_{\wt{L}^\infty_t(\cB^2_\h)}+\nu^{\f12}\bigl(\|\partial_3 u\|_{L^2_t(L^2)}+\|\p_3 u\|_{\wt{L}^2_t(\cB^2_\h)}\bigr)\\
\leq &C\Bigl(\|u_0\|_{L^2}+\|u_0\|_{\cB_\h^2}+\|u\|_{\wt{L}^\infty_t(\cB^2_\h)}^{\f12}\bigl(\|\p_3u\|_{L^2_t(L^2)}+\|\p_3 u\|_{\wt{L}^2_t(\cB^2_\h)}\bigr)\Bigr).
\end{split}
\eeq
Thanks to \eqref{S6eq7}, we deduce by a standard argument that \eqref{S1eq4} has a unique solution
$u\in C([0, T^\ast[; L^2\cap \cB_h^2(\Om))$ with $\p_3u\in L^2(]0, T^\ast[; L^2\cap \cB_h^2(\Om))$ for some maximal
existing time $T^\ast.$ We are going to prove that $T^\ast=\infty$ under the assumption of \eqref{S1eq5}.

Let us denote
\beq \label{S6eq8}
T^\star\eqdefa \sup\bigl\{ T<T^\ast, \ 4C^2\|u\|_{\wt{L}^\infty_t(\cB^2_\h)}\leq \nu_\v\ \bigr\}.
\eeq
Then for $t\leq T^\star,$ we deduce from \eqref{S6eq7} that
\beq \label{S6eq9}
\|u\|_{L^\infty(L^2)}+\|u\|_{\wt{L}^\infty_t(\cB^2_\h)}+\f{\nu^{\f12}}2\bigl(\|\partial_z u\|_{L^2_t(L^2)}+\|\p_3 u\|_{\wt{L}^2_t(\cB_\h^2)}\bigr)
\leq C\bigl(\|u_0\|_{L^2}+\|u_0\|_{\cB_\h^2}\bigr).
\eeq
In particular, under the assumption of \eqref{S1eq5},  for $t\leq T^\star,$ we have
\beno
\|u\|_{L^\infty(L^2)}+\|u\|_{\wt{L}^\infty_t(\cB^2_\h)}+\f{\nu^{\f12}}2\bigl(\|\partial_z u\|_{L^2_t(L^2)}+\|\p_3 u\|_{\wt{L}^2_t(\cB_\h^2)}\bigr)
\leq \f{\nu_\v}{8C^2}
\eeno as long as $c$ in \eqref{S1eq5} is small enough.
This contradicts with the definition of $T^\star$ determined by \eqref{S6eq8}, which  in turn shows that $T^\star=T^\ast=\infty.$ We complete
the proof of Theorem \ref{thm3}.
\end{proof}

\appendix

\setcounter{equation}{0}
\section{The proof of  Lemmas \ref{S4lem2} to \ref{S4lem6}}\label{appenda}

The goal of this section is to  present the proof of Lemmas \ref{S4lem2} to \ref{S4lem6}.

\begin{proof}[Proof of Lemma \ref{S4lem2}] Applying Bony's decomposition \eqref{pd} to $a\otimes [\buh]_\e$ in the vertical variable yields
\beno
a\otimes [\buh]_\e=T^\v_a[\buh]_\e+\bar{R}^\v(a,[\buh]_\e).
\eeno
Considering the support properties to the Fourier transform of the terms in $T^\v_a[\buh]_\e,$ we find
\beno
\begin{split}
\int_0^t&\bigl|\bigl(\D_\ell^\v(T^\v_{a}[\buh]_\e) | \D_\ell^\v b\bigr)_{L^2}\bigr|\,dt'\\
\leq&\sum_{|\ell'-\ell|\leq 4}\int_0^t\|S_{\ell'-1}^{\v} a\|_{L^\infty_\v(L^4_\h)}\|\D_{\ell'}^\v[\buh]_\e\|_{L^2_\v(L^4_\h)}\|\D_\ell^\v b\|_{L^2}\,dt'\\
\lesssim &\sum_{|\ell'-\ell|\leq 4}2^{-\f{\ell'}2}\int_0^t d_{\ell'}(t')\|a\|_{L^\infty_\v(L^4_\h)}\|[\buh]_\e\|_{B^{0,\f12}}^{\f12}\|[\na_\h\buh]_\e\|_{B^{0,\f12}}^{\f12}\|\D_\ell^\v b\|_{L^2}\,dt'\\
\lesssim &\sum_{|\ell'-\ell|\leq 4}d_{\ell'}2^{-\f{\ell'}2}\int_0^t \|a\|_{L^\infty_\v(L^4_\h)}\|\buh\|_{B^{0,\f12}}^{\f12}\|\na_\h\buh\|_{B^{0,\f12}}^{\f12}
\|\D_\ell^\v b\|_{L^2}\,dt'.
\end{split}
\eeno
On the other hand,  it follows from Lemma \ref{lemBern} that
\beno
\begin{split}
\|\D_\ell^\v a\|_{L^\infty_\v(L^4_\h)}\lesssim& 2^{\f{\ell}2}\|\D_\ell^\v a\|_{L^2_\v(L^4_\h)}\\
\lesssim &2^{\f{\ell}2}\|\D_\ell^\v a\|_{L^2}^{\f12}\|\D_\ell^\v \na_\h a\|_{L^2}^{\f12},
\end{split}
\eeno
from which and Definition \ref{defpz}, we infer
\beno
\begin{split}
\Bigl(\int_0^t& \|\buh\|_{B^{0,\f12}}\|\na_\h\buh\|_{B^{0,\f12}}\|a\|_{L^\infty_\v(L^4_\h)}^2\,dt'\Bigr)^{\f12}\\
\leq &\sum_{\ell\in\Z} 2^{\f{\ell}2} \Bigl(\int_0^t \|\buh\|_{B^{0,\f12}}\|\na_\h\buh\|_{B^{0,\f12}}\|\D_\ell^\v a\|_{L^2}\|\D_\ell^\v \na_\h a\|_{L^2}\,dt'\Bigr)^{\f12}\\
\lesssim &\sum_{\ell\in\Z}2^{\f{\ell}2}\Bigl(\int_0^t f_1(t')\|\D_\ell^\v a\|_{L^2}^2\,dt'\Bigr)^{\f12}\|\D_\ell^\v\na_\h a\|_{L^2_t(L^2)}^{\f12}\\
\lesssim &\|a\|_{\wt{L}^2_{t,f_1}(B^{0,\f12})}^{\f12}\|\na_\h a\|_{\wt{L}^2_{t}(B^{0,\f12})}^{\f12},
\end{split}
\eeno for $f_1(t)$ being given by \eqref{S4eq1}.
As a result, it comes out
\beq \label{S5eq5}
\int_0^t\bigl|\bigl(\D_\ell^\v (T^\v_{a}[\buh]_\e) | \D_\ell^\v b\bigr)_{L^2}\bigr|\,dt\lesssim d_\ell^2
2^{-\ell}\|a\|_{\wt{L}^2_{t,f_1}(\cB^{0,\f12})}^{\f12}\|\na_\h a\|_{\wt{L}^2_{t}(\cB^{0,\f12})}^{\f12}\|b\|_{\wt{L}^2_{t}(\cB^{0,\f12})}.
\eeq

Exactly along the same line, we have
\beno
\begin{split}
\int_0^t&\bigl|\bigl(\D_\ell^\v (\bar{R}^\v(a,[\buh]_\e)) | \D_\ell^\v b\bigr)_{L^2}\bigr|\,dt'\\
\lesssim&\sum_{\ell'\geq \ell-N_0}\int_0^t\|\D_{\ell'}^\v a\|_{L^2_\v(L^4_\h)}\|S_{\ell'+2}^\v([\buh]_\e)\|_{L^\infty_\v(L^4_\h)}\|\D_\ell^\v b\|_{L^2}\,dt'\\
\lesssim&\sum_{\ell'\geq \ell-N_0}\int_0^t\|\buh\|_{L^\infty_\v(L^4_\h)}\|\D_{\ell'}^\v a\|_{L^2}^{\f12}\|\D_{\ell'}^\v \na_\h a\|_{L^2}^{\f12}\|\D_\ell^\v b\|_{L^2}\,dt'\\
\lesssim&\sum_{\ell'\geq \ell-N_0}\Bigl(\int_0^t f_1(t')\|\D_{\ell'}^\v a\|_{L^2}^2\,dt'\Bigr)^{\f14}\|\D_{\ell'}^\v \na_\h a\|_{L^2_t(L^2)}^{\f12}\|\D_\ell^\v b\|_{L^2_t(L^2)}\\
\lesssim & d_\ell
2^{-\ell}\|a\|_{\wt{L}^2_{t,f_1}(B^{0,\f12})}^{\f12}\|\na_\h a\|_{\wt{L}^2_{t}(B^{0,\f12})}^{\f12}\|b\|_{\wt{L}^2_{t}(B^{0,\f12})}\sum_{\ell'\geq \ell-N_0}d_{\ell'}2^{-\f{\ell'-\ell}2},
\end{split}
\eeno
which implies
\beno \int_0^t\bigl|\bigl(\D_\ell^\v(\bar{R}^\v(a,[\buh]_\e)) | \D_\ell^\v b\bigr)_{L^2}\bigr|\,dt'\lesssim d_\ell^2
2^{-\ell}\|a\|_{\wt{L}^2_{t,f_1}(B^{0,\f12})}^{\f12}\|\na_\h a\|_{\wt{L}^2_{t}(B^{0,\f12})}^{\f12}\|b\|_{\wt{L}^2_{t}(B^{0,\f12})}.
\eeno
Along with \eqref{S5eq5}, we complete the proof of Lemma \ref{S4lem2}.
\end{proof}

\begin{proof}[Proof of Lemma \ref{S4lem3}]
Applying Bony's decomposition \eqref{pd} to $[\buh]_\e\cdot\na_\h v_L$ in the vertical variable gives
\beno
[\buh]_\e\cdot\na_\h v_L=T^\v_{[\buh]_\e}\na_\h v_L+\bar{R}^\v([\buh]_\e,\na_\h v_L).
\eeno
We first observe that
\beno
\begin{split}
\int_0^t&\bigl|\bigl(\D_\ell^\v (T^\v_{[\buh]_\e}\na_\h v_L) | \D_\ell^\v  a\bigr)_{L^2}\bigr|\,dt'\\
\lesssim&\sum_{|\ell'-\ell|\leq 4}\int_0^t\|S_{\ell'-1}^\v([\buh]_\e)\|_{L^\infty_\v(L^4_\h)}\|\D_{\ell'}^\v \na_\h v_L\|_{L^2}\|\D_{\ell}^\v a\|_{L^2_\v(L^4_\h)}\,dt'\\
\lesssim&\sum_{|\ell'-\ell|\leq 4}\int_0^t\|\buh\|_{L^\infty_\v(L^4_\h)}\|\D_{\ell'}^\v \na_\h v_L\|_{L^2}\|\D_{\ell}^\v a\|_{L^2}^{\f12}\|\D_{\ell}^\v \na_\h a\|_{L^2}^{\f12}\,dt'\\
\lesssim&\sum_{|\ell'-\ell|\leq 4}\|\D_{\ell'}^\v \na_\h v_L\|_{L^2_t(L^2)}\Bigl(\int_0^t f_1(t)\|\D_{\ell}^\v a\|_{L^2}^2\,dt\Bigr)^{\f14}\|\D_{\ell}^\v \na_\h a\|_{L^2_t(L^2)}^{\f12}\\
\lesssim & d_\ell^2
2^{-\ell}\|\na_\h v_L\|_{\wt{L}^2_{t}(B^{0,\f12})}\|a\|_{\wt{L}^2_{t,f_1}(B^{0,\f12})}^{\f12}\|\na_\h a\|_{\wt{L}^2_{t}(\cB^{0,\f12})}^{\f12},
\end{split}
\eeno where we used Definition \ref{defpz} in the last step.

Similarly, we have
\beno
\begin{split}
\int_0^t&\bigl|\bigl(\D_\ell^\v(\bar{R}^\v([\buh]_\e, {\na_\h v_L})) | \D_\ell^\v  a\bigr)_{L^2}\bigr|\,dt'\\
\lesssim&\sum_{\ell'\geq \ell-N_0}\int_0^t\|\D_{\ell'}^\v ([\buh]_\e)\|_{L^2_\v(L^4_\h)}\|S_{\ell'+2}^\v \na_\h v_L\|_{L^\infty_\v(L^2_\h)}\|\D_{\ell}^\v a\|_{L^2_\v(L^4_\h)}\,dt'\\
\lesssim&\sum_{\ell'\geq \ell-N_0}2^{-\f{\ell'}2}\int_0^Td_{\ell'}(t)\|\na_\h v_L\|_{B^{0,\f12}}\|\buh\|_{B^{0,\f12}}^{\f12}\|\na_\h\buh\|_{B^{0,\f12}}^{\f12}
\|\D_{\ell}^\v a\|_{L^2}^{\f12}\|\D_{\ell}^\v \na_\h a\|_{L^2}^{\f12}\,dt',
\end{split}
\eeno then by applying H\"older's inequality and using Definition \ref{defpz}, we obtain
\beno
\begin{split}
\int_0^t&\bigl|\bigl(\D_\ell^\v(\bar{R}^\v([\buh]_\e, {\na_\h v_L})) | \D_\ell^\v  a\bigr)_{L^2}\bigr|\,dt'\\
\lesssim&\sum_{\ell'\geq \ell-N_0}d_{\ell'}2^{-\f{\ell'}2}\|\na_\h v_L\|_{\wt{L}^2_t(B^{0,\f12})} \Bigl(\int_0^t f_1(t')\|\D_{\ell}^\v a\|_{L^2}^2\,dt'\Bigr)^{\f14}\|\D_{\ell}^\v \na_\h a\|_{L^2_t(L^2)}^{\f12}\\
\lesssim & d_\ell^2
2^{-\ell}\|\na_\h v_L\|_{\wt{L}^2_{t}(B^{0,\f12})}\|a\|_{\wt{L}^2_{t,f_1}(B^{0,\f12})}^{\f12}\|\na_\h a\|_{\wt{L}^2_{t}(B^{0,\f12})}^{\f12}.
\end{split}
\eeno
This completes the proof of Lemma \ref{S4lem3}.
\end{proof}

\begin{proof}[Proof of Lemma \ref{S4lem5}] By applying Bony's decomposition \eqref{pd} to $v_L\cdot[\na_\e\buh]_\e$ in the vertical variable to $v_L\cdot[\na_\e\buh]_\e,$ we write
\beno
v_L\cdot[\na_\e\buh]_\e=T^\v_{v_L}[\na_\e\buh]_\e+\bar{R}^\v(v_L,[\na_\e\buh]_\e).
\eeno
Note that
\beno
\begin{split}
&\int_0^te^{-\la_2\int_0^{t'}f_2(\tau)\,d\tau}\bigl|\bigl(\D_\ell^\v (T^\v_{v_L}[\na_\e \buh]_\e) | \D_\ell^\v  a\bigr)_{L^2}\bigr|\,dt'\\
&\lesssim \sum_{|\ell'-\ell|\leq 4}\int_0^te^{-\la_2\int_0^{t'}f_2(\tau)\,d\tau}\|S_{\ell'-1}^\v v_L\|_{L^\infty_\v(L^4_\h)}\|\D_{\ell'}^\v [\na_\e \buh]_\e\|_{L^2}
\|\D_\ell^\v a\|_{L^2_\v(L^4_\h)}\,dt'\\
&\lesssim \sum_{|\ell'-\ell|\leq 4}2^{-\f{\ell'}2}\int_0^t d_{\ell'}(t') e^{-\la_2\int_0^{t'}f_2(\tau)\,d\tau}\|v_L\|_{L^\infty_\v(L^4_\h)}\|\na_\e \buh\|_{\cB^{0,\f12}}
\|\D_\ell^\v a\|_{L^2}^{\f12}\|\D_\ell^\v \na_\h a\|_{L^2}^{\f12}\,dt'.
\end{split}
\eeno
Applying H\"older's inequality gives
\beq \label{S4eq15}
\begin{split}
&\int_0^te^{-\la_2\int_0^{t'}f_2(\tau)\,d\tau}\bigl|\bigl(\D_\ell^\v (T^\v_{v_L}[\na_\e \buh]_\e) | \D_\ell^\v  a\bigr)_{L^2}\bigr|\,dt'\\
&\lesssim \sum_{|\ell'-\ell|\leq 4} d_{\ell'}2^{-\f{\ell'}2}\|v_L\|_{\wt{L}^\infty_t(B^{0,\f12})}^{\f12}
 \|\na_\h v_L\|_{\wt{L}^2_t(B^{0,\f12})}^{\f12}\|\na_\h \D_{\ell}^\v a\|_{L^2_t(\cB^{0,\f12})}^{\f12}\\
 &\qquad\qquad\qquad\times \Bigl(\int_0^t f_2(t')e^{-4\la_2\int_0^{t'}f_2(\tau)\,d\tau}\,dt'\Bigr)^{\f14}
 \Bigl(\int_0^t f_2(t')\|\D_\ell^\v a\|_{L^2}^2\,dt'\Bigr)^{\f14}\\
&\lesssim  d_\ell^2 2^{-{\ell}}\la_2^{-\f14} \|v_L\|_{\wt{L}^\infty_t(\cB^{0,\f12})}^{\f12}
\|\na_\h v_L\|_{\wt{L}^2_t(\cB^{0,\f12})}^{\f12}\| a\|_{\wt{L}^2_{t,f_2}(B^{0,\f12})}^{\f12}
\| \na_\h a\|_{\wt{L}^2_{t}(B^{0,\f12})}^{\f12}.
 \end{split}
 \eeq

 On the other hand, we observe that
 \beno
\begin{split}
&\int_0^te^{-\la_2\int_0^{t'}f_2(\tau)\,d\tau}\bigl|\bigl(\D_\ell^\v (\bar{R}^\v(v_L,[\na_\e \buh]_\e)) | \D_\ell^\v  a\bigr)_{L^2}\bigr|\,dt'\\
&\lesssim \sum_{\ell'\geq \ell-N_0}\int_0^te^{-\la_2\int_0^{t'}f_2(\tau)\,d\tau}\|\D_{\ell'}^\v v_L\|_{L^2_\v(L^4_\h)}\|S_{\ell'+2}^\v [\na_\e \buh]_\e\|_{L^\infty_\v(L^2_\h)}
\|\D_\ell^\v a\|_{L^2_\v(L^4_\h)}\,dt'\\
&\lesssim \sum_{\ell'\geq \ell-N_0}\int_0^t  e^{-\la_2\int_0^{t'}f_2(\tau)\,d\tau}\|\D_{\ell'}^v v_L\|_{L^2}^{\f12}\|\D_{\ell'}^v \na_\h v_L\|_{L^2}^{\f12}
\|\na_\e \buh\|_{\cB^{0,\f12}}
\|\D_\ell^\v a\|_{L^2}^{\f12}\|\D_\ell^\v \na_\h a\|_{L^2}^{\f12}\,dt'.
\end{split}
\eeno
Then along the same line to proof of \eqref{S4eq15}, we arrive at
 \beno
\begin{split}
&\int_0^te^{-\la_2\int_0^{t'}f_2(\tau)\,d\tau}\bigl|\bigl(\D_\ell^\v \bar{R}^\v(v_L,[\na_\e \buh]_\e)) | \D_\ell^\v  a\bigr)_{L^2}\bigr|\,dt'\\
&\lesssim   d_\ell^2 2^{-{\ell}}\la_2^{-\f14}\|v_L\|_{\wt{L}^\infty_t(B^{0,\f12})}^{\f12}
\|\na_\h v_L\|_{\wt{L}^2_t(B^{0,\f12})}^{\f12}\| a\|_{\wt{L}^2_{t,f_2}(B^{0,\f12})}^{\f12}
\| \na_\h a\|_{\wt{L}^2_{t}(\cB^{0,\f12})}^{\f12}.
 \end{split}
 \eeno
This together with \eqref{S4eq15} completes the proof of Lemma \ref{S4lem5}.
\end{proof}

\begin{proof}[Proof of Lemma \ref{S4lem6}]
In view of \eqref{S2eq5},
we get, by applying Bony's decomposition  \eqref{pd} to $[\p_z\bar{u}^i]_\e[\bar{u}^j]_\e$ in the vertical variable that
\beq \label{S4eq8}
\begin{split}
\p_3[p^\h]_\e=&2\e\sum_{i,j=1}^2(-\D_\h)^{-1}\p_i\p_j\bigl([\p_z\bar{u}^i]_\e[\bar{u}^j]_\e\bigr)\\
=&2\e\sum_{i,j=1}^2(-\D_\h)^{-1}\p_i\p_j\bigl(T^\v_{[\p_z\bar{u}^i]_\e}[\bar{u}^j]_\e+\bar{R}^\v([\p_z\bar{u}^i]_\e, [\bar{u}^j]_\e)\bigr).
\end{split}
\eeq
We first observe from  Lemma \ref{lemBern}  that for any $\th\in [0,1/2[$
\beno
\begin{split}
\int_0^t&\exp\Bigl(-\la_3\int_0^{t'}f_3(\tau)\,d\tau\Bigr)\bigl|\bigl((-\D_\h)^{-1}\p_i\p_j\D_\ell^\v (T^\v_{[\p_z\bar{u}^i]_\e}[\bar{u}^j]_\e) | \D_\ell^\v a\bigr)_{L^2}\bigr|\,dt'\\
\lesssim & \sum_{|\ell'-\ell|\leq 4}\int_0^t\exp\Bigl(-\la_3\int_0^{t'}f_3(\tau)\,d\tau\Bigr)\|S_{\ell'-1}^\v([\p_z\bar{u}^i]_\e)\|_{L^\infty_\v(L^4_\h)}
\|\D_{\ell'}^\v([\bar{u}^j]_\e)\|_{L^2_\v(L^4_\h)}\|\D_\ell^\v a\|_{L^2}\,dt'\\
\lesssim & 2^{-\ell\th} \sum_{|\ell'-\ell|\leq 4}\int_0^t\exp\Bigl(-\la_3\int_0^{t'}f_3(\tau)\,d\tau\Bigr)\|\p_z\buh\|_{L^\infty_\v(\dH^{\f12}_\h)}
 \|\D_{\ell'}^\v([|D_\h|^{\f12}\bar{u}^j]_\e)\|_{L^2}\\
 &\qquad\qquad\qquad\qquad\qquad\qquad\qquad\qquad\qquad\qquad\qquad\qquad\times\|\D_{\ell}^\v\p_3 a\|_{L^2}^\th \|\D_\ell^\v a\|_{L^2}^{1-\th}\,dt',
 \end{split}
\eeno
where $|D_\h|^{\f12}$ denotes Fourier multiplier with symbol $|\xi_\h|^{\f12}.$
Then  noticing the definition of $f_3(t)$ given by \eqref{S4eq1}, we get, by applying  H\"older's inequality  and Definition \ref{defpz}, that
 \beno
\begin{split}
\int_0^t&\exp\Bigl(-\la_3\int_0^{t'}f_3(\tau)\,d\tau\Bigr)\bigl|\bigl((-\D_\h)^{-1}\p_i\p_j\D_\ell^\v (T^\v_{[\p_z\bar{u}^i]_\e}[\bar{u}^j]_\e) | \D_\ell^\v a\bigr)_{L^2}\bigr|\,dt'\\
\lesssim& \e^{-\th}2^{-\ell\th}\sum_{|\ell'-\ell|\leq 4}2^{-\ell'\left(\f{1}2-\th\right)}\int_0^t d_{\ell'}(t')\exp\Bigl(-\la_3\int_0^{t'}f_3(\tau)\,d\tau\Bigr)\|\p_z\buh\|_{\cB^{\f12,\f12}}\\
 &\qquad\qquad\qquad\qquad\qquad\qquad\qquad \times\|\buh\|_{\cB^{\f12,\f12-\th}}
\|\D_\ell^\v\p_3 a\|_{L^2}^\th\|\D_\ell^\v a\|_{L^2}^{1-\th}\,dt'\\
\lesssim & \e^{-\th}2^{-\ell\th}\sum_{|\ell'-\ell|\leq 4}d_{\ell'} 2^{-\ell'\left(\f{1}2-\th\right)}
\Bigl(\int_0^{t}f_3(t')\exp\Bigl(-2\la_3\int_0^{t'}f_3(\tau)\,d\tau\Bigr)\,dt'\Bigr)^{\f12}\\
&\qquad\times \Bigl(\int_0^t \|\buh\|_{\cB^{\f12,\f12-\th}}^{\f2{1-\th}}\|\D_\ell^\v a\|_{L^2}^2\,dt'\Bigr)^{\f{1-\th}2}\|\D_\ell^\v\p_3 a\|_{L^2_t(L^2)}^\th\\
\lesssim& \e^{-\th} d_\ell^22^{-\ell}\la_3^{-\f12}\|a\|_{\wt{L}^2_{t,f_3}(B^{0,\f12})}^{1-\th}\|\p_3 a\|_{\wt{L}^2_t(B^{0,\f12})}^\th.
\end{split}
\eeno

Along the same line, we have
\beno
\begin{split}
\int_0^t&\exp\Bigl(-\la_3\int_0^{t'}f_3(\tau)\,d\tau\Bigr)\bigl|\bigl((-\D_\h)^{-1}\p_i\p_j\D_\ell^\v(\bar{R}^\v([\p_z\bar{u}^i]_\e,[\bar{u}^j]_\e)) | \D_\ell^\v a\bigr)_{L^2}\bigr|\,dt'\\
\lesssim & \sum_{\ell'\geq \ell-N_0}\int_0^t\exp\Bigl(-\la_3\int_0^{t'}f_3(\tau)\,d\tau\Bigr)\|{\D}_{\ell'}^\v([\p_z\bar{u}^i]_\e)\|_{L^2_\v(L^4_\h)}
\|S_{\ell'+2}^\v([\bar{u}^j]_\e)\|_{L^\infty_\v(L^4_\h)}\|\D_\ell^\v a\|_{L^2}\,dt'\\
\lesssim & \e^{-\th}2^{-\ell\th} \sum_{\ell'\geq\ell-N_0}d_{\ell'}2^{-\ell'\left(\f{1}2-\th\right)}\int_0^t\exp\Bigl(-\la_3\int_0^{t'}f_3(\tau)\,d\tau\Bigr)
\|\p_z\buh\|_{\cB^{\f12,\f12}}\\
&\qquad\qquad\qquad\qquad\qquad\qquad\qquad\times \|\buh\|_{\cB^{\f12,\f12-\th}}\|\D_\ell^\v\p_3a\|_{L^2}^\th\|\D_\ell^\v a\|_{L^2}^{1-\th}\,dt',
\end{split}
\eeno
which together with the fact that $\th\in [0,1/2[$ implies that
\beno
\begin{split}
\int_0^t&\exp\Bigl(-\la_3\int_0^{t'}f_3(\tau)\,d\tau\Bigr)\bigl|\bigl((-\D_\h)^{-1}\p_i\p_j\D_\ell^\v(\bar{R}^\v([\p_z\bar{u}^i]_\e,[\bar{u}^j]_\e)) | \D_\ell^\v a\bigr)_{L^2}\bigr|\,dt'\\
\lesssim& d_\ell \e^{-\th}2^{-\ell\left(\th+\f12\right)}\|a\|_{\wt{L}^2_{t,f_3}(B^{0,\f12})}^{1-\th}\|\p_3 a\|_{\wt{L}^2_t(B^{0,\f12})}^\th \sum_{\ell'\geq\ell-N_0}d_{\ell'}2^{-\ell'\left(\f{1}2-\th\right)}\\
\lesssim& \e^{-\th} d_\ell^22^{-\ell}\la_3^{-\f12}\|a\|_{\wt{L}^2_{t,f_3}(B^{0,\f12})}^{1-\th}\|\p_3 a\|_{\wt{L}^2_t(B^{0,\f12})}^\th.
\end{split}
\eeno
Then by virtue of \eqref{S4eq8}, we complete the proof of Lemma \ref{S4lem6}.
\end{proof}

\bigbreak \noindent {\bf Acknowledgments.} M. Paicu was partially supported by the Agence Nationale de la
Recherche, Project IFSMACS, Grant ANR-15-CE40-0010.
P. Zhang is partially supported
by NSF of China under Grants   11371347 and 11688101, Morningside Center of Mathematics of The Chinese Academy of Sciences and innovation grant from National Center for
Mathematics and Interdisciplinary Sciences.

\end{document}